\theoremstyle{plain}
\newtheorem{theorem}{Theorem}
\newtheorem*{theorem*}{Theorem}
\newtheorem{lemma}{Lemma}
\theoremstyle{definition}
\theoremstyle{plain}
\newtheorem{example}{Example}
\theoremstyle{remark}
\newcommand{\E}{\text{\sc{E}}}  
\newcommand{\Var}{\text{\sc{Var}}}  
\newcommand{\Cov}{\text{\sc{Cov}}}  
\newcommand{\Prob}{\text{P}}  
\newcommand{\Deriva}{d}  
\newcommand{\diag}{\text{diag}}
\newcommand\T{{\mathpalette\raiseT\intercal}}
\newcommand\raiseT[2]{%
  \setbox0\hbox{$#1{#2}$}\raise\dp0\box0}
\newcommand{\argmax}{\text{argmax}}
\newcommand{\ind}{\mathds{1}}
\newcommand{\cl}{\mathcal{L}}
\newcommand{\h}{\boldsymbol{h}}
\newcommand{\g}{\boldsymbol{g}}
\newcommand{\altg}{\mathcal{G}}
\newcommand{\e}{\boldsymbol{e}}
\newcommand{\q}{\boldsymbol{q}}
\newcommand{\bQ}{\boldsymbol{Q}}
\newcommand{\bv}{\boldsymbol{v}}
\newcommand{\bw}{\boldsymbol{w}}
\newcommand{\bz}{\boldsymbol{z}}
\newcommand{\bc}{\boldsymbol{c}}
\newcommand{\bbeta}{\boldsymbol{\beta}}
\newcommand{\balpha}{\boldsymbol{\alpha}}
\newcommand{\btheta}{\boldsymbol{\theta}}
\newcommand{\bgamma}{\boldsymbol{\gamma}}
\newcommand{\btau}{\boldsymbol{\tau}}
\newcommand{\bxi}{\boldsymbol{\xi}}
\newcommand{\bmu}{\boldsymbol{\mu}}
\newcommand{\bsigma}{\boldsymbol{\sigma}}
\newcommand{\etab}{\boldsymbol{\eta}}
\newcommand{\bnu}{\boldsymbol{\nu}}
\newcommand{\bzeta}{\boldsymbol{\zeta}}
\newcommand{\bphi}{\boldsymbol{\phi}}
\newcommand\Tstrut{\rule{0pt}{2.6ex}}
\newcommand\Bstrut{\rule[-0.9ex]{0pt}{0pt}}
\begin{document}

\title{Hypothesis Testing in the Presence of Multiple
Samples under Density Ratio Models}

\author[1]{Song Cai}
\author[2]{Jiahua Chen}
\author[2]{James V. Zidek}
\affil[1]{School of Mathematics and Statistics, Carleton
University}
\affil[2]{Department of Statistics, University of British
Columbia}
\maketitle


\begin{abstract}
This paper presents a hypothesis testing method 
given independent samples 
from a number of connected populations.
The method is motivated by a forestry project
for monitoring change in the strength of lumber.
Traditional practice has been built upon nonparametric
methods which ignore the fact that
these populations are connected. By pooling the
information in multiple samples through a density ratio
model, the proposed empirical likelihood method
leads to a more efficient inference and therefore 
reduces the cost in applications.
The new test has a classical
chi--square null limiting distribution.
Its power function is obtained under a class of
local alternatives. The local power is found increased even
when some underlying populations are unrelated to the
hypothesis of interest. Simulation studies confirm that this
test has better power properties than potential competitors,
and is robust to model misspecification.
An application example to lumber strength is 
included.
\end{abstract}

\noindent {\it Key words and phrases:}
Dual empirical likelihood,
Information pooling,
Likelihood ratio test,
Local power,
Long term monitoring,
Lumber quality,
Semiparametric inference.


\section{Introduction}

The paper presents a method for testing hypotheses 
about parameters of a given number
of different population distributions with 
independent samples from each. The 
method was created as part of a research
program aimed at developing statistical theory
for monitoring change in the 
strength of lumber.  Interest in 
such a program has been sparked by
climate change, which will affect the 
way trees grow, as well by the changing 
resource mix, for example due to increasing 
reliance on plantation lumber.
Added impetus comes from the increasing
importance of wood as 
a construction material due to its sustainability as a
building material.  Moreover, the 
worldwide forest products industry is vast.

Desiderata for the statistical methods used in the long term
monitoring program of lumber includes two key goals.  First
the methods must be efficient to reduce the sizes of the
required samples: testing lumber costs time and money. For
example lumber must be conditioned in the lab over a period
of months before being destructively tested. Toward the goal
of efficiency, this paper proposes a method that borrows
strength across the multiple samples by exploiting an
obvious feature of  the resource, that distinct populations
of lumber over years, species, regions and so on will share
some latent strength characteristics. Second the methods
should ideally be nonparametric in accordance with the
well--ingrained practice in setting standards for forest
products like those in American Society for Testing and
Materials (ASTM) protocols \citetext{\citeauthor{ASTMD1990}
D1990 -- 07}.

These desiderata, lead
to the semiparametric density ratio model (DRM) 
adopted in this paper. More precisely,
suppose we have $m+1$ lumber populations 
with cumulative distribution
functions (CDFs) $F_k(x)$, $k = 0, \, \ldots, \, m$.
We  link them through the DRM assumption:
\begin{align} 
\label{eq:DRM}
  d F_k(x)
  =
  \exp\big\{ \alpha_k + \bbeta_k^\T \q(x) \big\} dF_0(x),
\end{align}
where $x$ could be a single--valued or vector--valued
variable,
$\q(x)$, the \emph{basis function}, is a prespecified
$d$--dimensional function,
and $\btheta_k^\T = (\alpha_k, \bbeta_k^\T)$ are model parameters.
But the baseline distribution $F_0(x)$ is completely
unspecified and for convenience,
we denote $\btheta_0 = \bf 0$.

The DRM is flexible and covers many commonly used distribution families,
including each member of the exponential family.
For example, normal distributions $N(\mu_k, \, \sigma_k)$,
$k=0, \,\ldots, \, m$,
satisfy a DRM with basis function
$\q(x) = {(x, \, x^2)}^\T$ and corresponding parameters
$\beta_k = {(\mu_k/\sigma_k^2 - \mu_0/\sigma_0^2, \,
1/(2\sigma_0^2) - 1/(2\sigma_k^2))}^\T$,
$\alpha_k = \log{(\sigma_0/\sigma_k)} +
\mu_0^2/(2\sigma_0^2) - \mu_k^2/(2\sigma_k^2)$.
Similarly, gamma distributions satisfy a DRM with 
$\q(x) = {(\log x, \, x)}^\T$.
There is also a close relationship between the logistic regression
model in case--control studies and the two--sample DRM
\citep{Qin1997}.

The empirical likelihood (EL) is a natural platform for data analysis
that in recent years has been widely studied in the context of DRM,
\citet{Chen2013} and \citet{Zhang2000} for quantile estimation, 
\citet{Fokianos2004} for density estimation,
and \citet{Keziou2008} the two--sample EL ratio test.
However investigating the properties
of tests constructed under the DRM assumption proves challenging
since the parameters under the null hypothesis 
are often not interior points of the parameter space.
Thus, the limiting distribution of the
EL--based likelihood ratio test cannot be derived from
the usual approach such as the ones given in 
\citet{Owen2001} or \citet{Qin1998}.
Hence instead in this paper, we study the properties of the
\emph{dual empirical likelihood ratio} (DELR) test.
We show that the proposed test statistic has a
classical chi--square null limiting distribution
under fairly general conditions.
We further study its power function under a class of local alternatives
and find that this local power often increases when
additional samples are included in the data analysis even
when their distributions are not related to the hypothesis.
This result supports the use of the DRM for
pooling information across multiple samples.
Under a broad range of distributional settings,
our simulations show that the proposed 
DELR test is more powerful in detecting distributional
changes over samples than many classical tests.
The new method is also found to be model robust: its size
and power are resistant to mild violations to the DRM
assumption.

{\it
An anonymous referee suggested
the semi--parametric proportional
hazards model (CoxPH) proposed by \citep{Cox1972} as 
an alternative for analyzing multiple samples. 
The CoxPH model for multiple sample amounts to 
assuming 
\begin{align*}
h_k(x) = \exp(\beta_k) h_0(x),
\end{align*}
$h_k(x)$ being the hazard function of the $k$th sample.
Clearly, this model would impose a very strong restrictions on  
how $m+1$ populations are connected. 
In comparison, the DRM is much more flexible by allowing the
density ratio to be a function of $x$.
The limitation of the CoxPH approach
for multiple samples 
is easily seen 
in the simulation 
studies whose results  
are included in Sections 5.3 and 5.4.
The power of the partial likelihood ratio test
under the CoxPH is comparable to that of
DRM approach when the proportional hazards assumption
is true. Otherwise, the DRM--based test has
a higher power. 

The CoxPH method 
is superior for multiple populations indexed
by some covariate $z$ when $\exp(\beta_k)$
would be replaced by $\exp(\beta^\T z)$. 
It needs only a single
parameter vector $\beta^\T$.
If the DRM were applied to survival data, 
the conceptual size of $m$ would 
equal the number of distinct $z$ values,
and hence be as large as the sample size.
Thus each of the CoxPH and DRM 
methods have domains of applicability
in which they would be
superior to the other.}


The paper is organized as follows.  
We first review the EL methodology
for multiple samples under the DRM.
We then motivate the use of dual EL to overcome the
associated boundary problem. 
In Section 3, we obtain the limiting distributions
of the DELR statistic under various null hypotheses
and local alternatives.
Section 4 studies the effect of
information pooling on power properties of the DELR test. 
The finite sample properties of
the DELR test are assessed via simulation in Section
5.
An application example to lumber strength is 
given in Section 6.
The simulation details and the proofs are presented in the
Appendices.

\section{EL under the DRM}
\label{sec:EL}

Denote the observations in the $m+1$ samples as
\begin{align*} 
  \{x_{kj}: j=1, \, \ldots, \, n_k \}_{k=0}^{m}
\end{align*}
where $n_k > 0$ is the size of the $k^{th}$ sample. 
We will denote the total sample size as $n=\sum_k n_k$.
Let $\Deriva F_k(x) = F_k(x) - F_k(x^-)$,
and put $p_{kj} = \Deriva F_0(x_{kj})$.
Under the DRM assumption (\ref{eq:DRM}),
the EL of the $\{F_k\}$ is defined to be
\begin{align*}
L_n(F_0, \, \ldots, \, F_m)
  &= \prod_{k, \, j} \Deriva F_{k}(x_{kj})
  = \Big\{ \prod_{k, \, j} p_{kj} \Big\} \cdot
  \exp \Big\{
    \sum_{k, \, j} \big(\alpha_k + \bbeta_k^\T \q(x_{kj})\big)
  \Big\},
\end{align*}
where the sum and product are over all possible $(k, j)$ combinations.
The DRM assumption and the fact that the $\{F_k\}$ are
distribution functions imply that
\begin{align}
\label{eq:drmConstraint}
1
=
\int \Deriva F_k(x)
= 
\int \exp\{\alpha_k + \bbeta_k^\T \q(x)\} \Deriva F_0(x).
\end{align}
Let
$\balpha = (\alpha_1, \, \ldots, \, \alpha_m)^\T$,
$\bbeta^\T = (\bbeta_1^\T, \, \ldots, \, \bbeta_m^\T)$,
and
$\btheta^\T = (\balpha^\T, \, \bbeta^\T)$.
We may also write the EL as $L_n(\btheta, \, F_0)$.

The maximum EL estimator (MELE) of $\btheta$ and $F_0$
is the maximum point of $L_n(\btheta, \, F_0)$
over the space of $\btheta$ and $F_0$ such that
(\ref{eq:drmConstraint}) is satisfied. For both theoretical discussion
and numerical computation, the maximization is carried out in two
steps.
First, we define the \emph{profile log EL}:
\begin{align*}
\tilde{l}_n(\btheta) 
  = \sup \big \{ 
  \log L_n(\btheta, \, F_0): \  
  \sum_{k,\, j} \exp\{ \alpha_r + \bbeta_r^\T \q(x_{kj})\}
    p_{kj} = 1,
  \  r=0, \ldots, m.
  \big \}
\end{align*}
where the supremum is over the space of $F_0$ with fixed $\btheta$.
Based on the method of Lagrange multipliers, the supremum is
found to be attained when
\begin{align}
\label{eq.pp}
p_{kj}
  =
  {n}^{-1} \Big\{ 1 + 
    \sum_{r = 1}^{m} \lambda_r
    \big[\exp \big\{ \alpha_r + \bbeta_r^\T \q(x_{kj}) \big\} - 1\big]
  \Big\}^{-1},
\end{align}
where the Lagrange multipliers $\{\lambda_r\}$ solve,
for $t=0, \, \ldots, \, m$,
\begin{align}
\label{eq.Lag}
  \sum_{k, \, j}
   \exp \big\{ \alpha_t + \bbeta_t^\T \q(x_{kj}) \big\} p_{kj}
   = 1.
\end{align}
The profile log EL can hence be written as
\begin{align*}
  \tilde{l}_n(\btheta)
  = - \sum_{k,\, j} \log \Big\{
    1 + \sum_{r=1}^m \lambda_r
    \big[ \exp \big\{ \alpha_r + \bbeta_r^\T \q(x_{kj}) \big\} - 1\big]
  \Big\} +
  \sum_{k,\, j} \big\{ \alpha_k + \bbeta_k^\T \q(x_{kj}) \big\}.
\end{align*}

The MELE $\hat \btheta$ of $\btheta$ is then the point at which 
$\tilde{l}_n(\btheta)$ is maximized. Given $\hat \btheta$, we solve for
the Lagrange multipliers $\hat \lambda_r$ through (\ref{eq.Lag}).
Interestingly, we always have $\hat \lambda_r = n_r/n$.
Subsequently, we obtain $\hat p_{kj}$ by plugging
$\hat \btheta$ and $\hat \lambda_k$ into \eqref{eq.pp}.
Finally, the MELEs of the $\{F_k\}$ are given by
\begin{align*}
\hat F_k(x)
=
n^{-1} \sum_{r, \, j} 
\exp \big\{ \hat \alpha_k + \hat \bbeta_k^\T \q(x_{rj}) \big\}
\hat p_{rj} 
\ind( x_{rj} \leq x),
\end{align*}
where $\ind(\cdot)$ is the indicator function.

In applications such as that described in the Introduction to the
forestry products industry, 
giving a point estimation is
a minor part of the data analysis. Assessing the uncertainty in the
point estimator and testing hypotheses would 
be judged of 
greater practical importance. 
Asymptotic properties of the point
estimator and the likelihood function enable 
more such in--depth data analyses.
However, classical asymptotic theories usually rely on 
differential properties of the likelihood function in 
the neighbourhood
of the true parameter value.
Consequently these results are applicable only if this
neighbourhood  lies in the parameter space.

According to (\ref{eq:drmConstraint}), $\alpha_k$ is just
a normalizing constant satisfying
\begin{align*}
  \alpha_k
  =
  - \log \int \exp\{\bbeta_k^\T \q(x)\} \Deriva F_0(x).
\end{align*}
Thus, $\alpha_k=0$ whenever $\bbeta_k = \boldsymbol{0}$.
When the true value $\btheta_1 = \boldsymbol{0}$,
its neighborhood will not be contained in the parameter space.
In statistical terminology, DRM is not regular at this $\btheta$, 
as noticed by \citet{Zou2002a}. 
Clearly, the regularity is also violated when
$\bbeta_1 = \bbeta_2$ which implies $\alpha_1 = \alpha_2$. 
In our application, $\btheta_k$ would be
 the parameter of the lumber population
at year $k$ and $\btheta_1 = \btheta_2$ would 
signify the stability
of the wood quality over these two years. Non--regularity
denies a simplistic application of the straightforward
EL ratio test to this important hypothesis.
This creates a need for other effective inferential methods.


\section{Dual EL and its properties}
\label{sec:DEL}

Recall that when $\btheta = \hat \btheta$,
$\hat \lambda_r = n_r/n$. Hence, if we define
\begin{align} \label{eq:del}
  l_n (\btheta)
  = - \sum_{k,\, j} \log \Big\{
    \sum_{r=0}^m \hat \lambda_r
    \exp \big\{ \alpha_r + \bbeta_r^\T \q(x_{kj}) \big\}
  \Big\} +
  \sum_{k,\, j} \big\{ \alpha_k + \bbeta_k^\T \q(x_{kj}) \big\},
\end{align}
then we still have
$
 \hat \btheta = \underset{\btheta}\argmax \, l_n(\btheta)
$.
\citet{Keziou2008} refer to $ l_n (\btheta)$ as the
dual empirical likelihood (DEL) function. 
Compared to the EL under the DRM assumption, the DEL is
well--defined for any $\btheta$ in the corresponding
Euclidean space, has a simple analytical form, and is
concave.
Under a two--sample DRM ($m=1$), \citeauthor{Keziou2008}
found that the corresponding likelihood ratio test statistic
has the usual chi--square limiting distribution for $H_0:
\bbeta_1 = {\bf 0}$. However, this result does not apply to
many hypothesis testing problems of our interest; for
example, there are $m + 1 = 5$ samples and the hypothesis is
\begin{align} \label{hp:complexEx1}
  &H_0: \bbeta_1 = \boldsymbol{0}
  \qquad \text{against} \qquad
  H_1: \bbeta_1 \neq \boldsymbol{0},
\end{align}
where $\bbeta_2$, $\bbeta_3$ and $\bbeta_4$ are nuisance
parameters that do not appear in the hypothesis, or
\begin{align} \label{hp:complexEx2}
  &H_0: \bbeta_1 = \boldsymbol{0}  \text{ and } \bbeta_2 = \bbeta_3
  \qquad \text{against} \qquad
  H_1: \bbeta_1 \neq \boldsymbol{0} \text{ or } \bbeta_2
  \neq \bbeta_3.
\end{align}
These are two problems that we have encountered in our
lumber quality monitoring project.

Many of our inferential problems can be abstractly stated as
testing
\begin{align} \label{hp:composite}
  H_0: \   \g(\bbeta) = {\bf 0}
  \ \ \  \text{against} \ \ \  
  H_1: \  \g(\bbeta) \neq {\bf 0}
\end{align}
for some smooth function
$\g: \mathbb{R}^{md} \to \mathbb{R}^{q}$,
with $q \leq md$, the length of $\bbeta$.
 Recall that $m$ is the number of non--baseline
distributions and $d$, the dimension of the basis
function $\q(x)$.
\emph{We will always assume that $\g$ is thrice
differentiable with a full rank Jacobian matrix
$\partial \g/\partial \bbeta$.}
The parameters $\{\alpha_k\}$ are usually not a part of the
hypothesis, because their values are fully determined by the
$\{\bbeta_k\}$ and $F_0$ under the DRM assumption,
although they are treated as independent parameters
in the DEL.

Let $\tilde \btheta$ be the point at which the maximum of
$l_n(\btheta)$ is attained 
under the constraint $\g(\bbeta) = {\bf 0}$.
The DELR test statistic is defined to be
\begin{align*}
  R_n
  =
  2 \{ l_n (\hat \btheta) - l_n (\tilde \btheta) \}.
\end{align*}
Does $R_n$ have the properties of a regular likelihood ratio test
statistic?
The answer is positive and we state the result as follows, whose proof
is given in the supplementary material.

\begin{theorem} \label{thm:DELRT}
Suppose we have $m+1$ random samples from populations
with distributions of the DRM form 
given in (\ref{eq:DRM})
and a true parameter value $\btheta^*$ such that
$
  \int \exp \{\bbeta_k^\T \q(x)\} \Deriva F_0(x) < \infty
$
for $\btheta$ in a neighbourhood of $\btheta^*$,
$\int \bQ(x)\bQ^\T(x) \Deriva F_0(x)$
is positive definite with $\bQ^\T(x) = (1, \, \q^\T(x))$,
and
$\hat \lambda_k = n_k/n = \rho_k+o(1)$ for some
constant $\rho_k \in (0, 1)$.

Under the null hypothesis $\g(\bbeta)={\bf 0}$,
$
  R_n \to \chi^2_q
$
  in distribution  as $n \to \infty$,  
  where $\chi^2_q$ is a chi--squared random variable
  with $q$ degrees of freedom.
\end{theorem}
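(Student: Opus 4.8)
The plan is to exploit the fact, noted above, that the dual EL $l_n(\btheta)$ is a smooth, concave function defined on the full Euclidean parameter space, so that a classical Wilks-type argument applies once the right score and information quantities are in hand. First I would record the gradient $\boldsymbol{S}_n(\btheta) = \partial l_n/\partial \btheta$ and the negative Hessian $\boldsymbol{A}_n(\btheta) = -\partial^2 l_n/\partial\btheta\,\partial\btheta^\T$. Evaluating at the true value $\btheta^*$ and using the DRM tilting identity $\Deriva F_r = \exp\{\alpha_r + \bbeta_r^\T\q\}\Deriva F_0$ together with $\hat\lambda_r = n_r/n = \rho_r + o(1)$, I would show by the law of large numbers that $n^{-1}\boldsymbol{A}_n(\btheta^*) \to \boldsymbol{U}$ in probability, where $\boldsymbol{U}$ is built from $\int \bQ(x)\bQ^\T(x)$ integrated against the population and mixture distributions and is positive definite by the assumed positive definiteness of $\int\bQ\bQ^\T\,\Deriva F_0$; and by the central limit theorem that $n^{-1/2}\boldsymbol{S}_n(\btheta^*) \to N(\boldsymbol{0}, \boldsymbol{V})$ for a covariance $\boldsymbol{V}$ obtained from the same tilting structure.

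With these in place, the second step is the standard quadratic-expansion analysis of the two maximizers. Concavity and $\sqrt n$-consistency give $\sqrt n(\hat\btheta - \btheta^*) = \boldsymbol{U}^{-1}\,n^{-1/2}\boldsymbol{S}_n(\btheta^*) + o_p(1)$ for the unconstrained MELE. For the constrained maximizer $\tilde\btheta$ I would introduce a Lagrange multiplier for $\g(\bbeta) = \boldsymbol{0}$, linearize the constraint as $\g(\bbeta) \approx \boldsymbol{G}(\bbeta - \bbeta^*)$ with $\boldsymbol{G} = \partial\g/\partial\bbeta$ of full rank $q$ at $\btheta^*$, and solve the resulting linear system. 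Because the constraint involves only the $\bbeta$-block while $l_n$ also carries the nuisance $\balpha$, I would keep track of the block structure (equivalently, embed $\boldsymbol{G}$ into the full $\btheta$-space as a map $\boldsymbol{B}$ acting on the $\bbeta$ coordinates) through a partitioned-matrix computation.

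Subtracting the two expansions, $R_n = 2\{l_n(\hat\btheta) - l_n(\tilde\btheta)\}$ collapses, after the usual algebra for constrained versus unconstrained maximization of a quadratic, to a quadratic form in the asymptotically normal vector $\boldsymbol{G}(\hat\bbeta - \bbeta^*)$ with sandwich matrix $(\boldsymbol{G}\,[\boldsymbol{U}^{-1}]_{\bbeta\bbeta}\,\boldsymbol{G}^\T)^{-1}$. This quadratic form has a limiting $\chi^2_q$ distribution precisely when the covariance of $\boldsymbol{G}(\hat\bbeta-\bbeta^*)$ equals $\boldsymbol{G}\,[\boldsymbol{U}^{-1}]_{\bbeta\bbeta}\,\boldsymbol{G}^\T$, i.e. when the dual-EL analogue of the information identity relating $\boldsymbol{V}$ and $\boldsymbol{U}$ holds on the relevant subspace.

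I expect that last point to be the main obstacle. Since the DEL is not a genuine likelihood, the collapse $\boldsymbol{U}^{-1}\boldsymbol{V}\boldsymbol{U}^{-1} = \boldsymbol{U}^{-1}$ (after restriction to the $\bbeta$-block and projection by $\boldsymbol{G}$) cannot be quoted from classical theory and must be verified directly. The verification rests on computing $\boldsymbol{V}$ and $\boldsymbol{U}$ explicitly: writing $\rho_s\,\Deriva F_s/\Deriva H$ for the posterior probability that an observation with value $x$ originates from population $s$ (with $\Deriva H = \sum_r \rho_r\,\Deriva F_r$ the mixture), one finds that both matrices are assembled from the moments of $\bQ\bQ^\T$ weighted by these posterior probabilities, and the cross-sample cancellations produced by the exponential-tilting constraints collapse the sandwich to the single inverse needed for an exact $\chi^2_q$ limit. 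The degrees of freedom equal $q$ because $\boldsymbol{G}$ has full row rank $q$, so $\boldsymbol{G}(\hat\bbeta - \bbeta^*)$ is a $q$-dimensional nondegenerate Gaussian under $H_0$.
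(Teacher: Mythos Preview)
Your proposal is correct and follows essentially the same Wilks--type route as the paper: quadratic expansion of $l_n$ at $\btheta^*$, CLT for the score, LLN for the Hessian, and then a comparison of the unconstrained and constrained maxima. The one place the two arguments diverge is in how the constraint is handled. You linearize $\g(\bbeta)=\boldsymbol{0}$ via its Jacobian $\boldsymbol{G}$ and Lagrange multipliers, arriving at $R_n$ as a quadratic form in $\boldsymbol{G}(\hat\bbeta-\bbeta^*)$ with matrix $(\boldsymbol{G}[U^{-1}]_{\bbeta\bbeta}\boldsymbol{G}^\T)^{-1}$; the paper instead reparametrizes the null manifold via the implicit function theorem, writing $\bbeta=\altg(\bgamma)$ with Jacobian $J$, and obtains $R_n=\bxi^\T\{\Lambda^{-1}-J(J^\T\Lambda J)^{-1}J^\T\}\bxi+o_p(1)$ for a profiled score $\bxi$. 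The two representations are dual (your $\boldsymbol{G}$ annihilates the columns of their $J$) and lead to the same $\chi^2_q$ limit.

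You correctly singled out the only nontrivial step: since $l_n$ is not a genuine likelihood, the collapse of the sandwich on the $\bbeta$--block must be checked directly. The paper's computation makes this transparent and is worth knowing: one finds $\boldsymbol{V}=U-UWU$ with $W=\diag\{T,\,0_{md\times md}\}$ supported \emph{only} on the $\balpha$--block (here $T=\rho_0^{-1}\boldsymbol{1}_m\boldsymbol{1}_m^\T+\diag\{\rho_1^{-1},\ldots,\rho_m^{-1}\}$). Hence $U^{-1}\boldsymbol{V}U^{-1}=U^{-1}-W$, and the $\bbeta\bbeta$--block of the sandwich equals $[U^{-1}]_{\bbeta\bbeta}=\Lambda^{-1}$ exactly, which is precisely the identity your argument needs. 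Your heuristic about posterior weights $\rho_s\,\Deriva F_s/\Deriva H$ is the right intuition for why the $\balpha$--structure of $W$ arises, but making the identity $\boldsymbol{V}=U-UWU$ explicit is what turns that intuition into a proof.
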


When $m=1$ and $\g(\bbeta) = \bbeta_1$, Theorem
\ref{thm:DELRT} reduces to the result of \citet{Keziou2008}.
Theorem \ref{thm:DELRT} covers additional ground.
For instance, it covers the hypthesis testing problems
\eqref{hp:complexEx1} and \eqref{hp:complexEx2}.

The null limiting distribution is most useful for approximating
the p--value of a test but it does not give the power of the test.
For the latter, we use the limiting distribution
of $R_n$ at a local alternative.
Let $\{\bbeta_k^*\}$ be a set of parameter values
which form a null model satisfying \mbox{$H_0: \g(\bbeta) = 0$}
under the DRM assumption.
Let
\begin{align} \label{eq:localAlt}
  \bbeta_k
  =
  \bbeta_k^* + n_k^{-1/2} \bc_k
\end{align}
for some constants $\{\bc_k\}$ be a set of parameter values
which form a local alternative. We denote the distribution functions
corresponding to $\bbeta_k^*$ and $\bbeta_k$ as $F_k$ and $G_k$ with $G_0 =
F_0$, respectively.
Note that the $\{G_k\}$ are placed at $n^{-1/2}$ distance from the $\{F_k\}$.
As $n \to \infty$, the limiting distribution of $R_n$ under this local alternative
is usually non--degenerate and provides useful information on
the power of the test.

Now let
\mbox{$
  U_n
  =
  -{n}^{-1} {\partial^2 l_n(\btheta^*)} / {\partial \btheta \partial \btheta^\T}
$}
for the empirical information matrix.
Its almost sure limit under $H_0$ is a symmetric positive definite
matrix, which may be regarded as an \emph{information matrix} $U$. 
We partition the entries of $U$ in agreement with 
$\balpha$ and $\bbeta$ and represent them as
$U_{\balpha \balpha}$, $U_{\balpha \bbeta}$, $U_{\bbeta \balpha}$
and $U_{\bbeta \bbeta}$.
Let
$\varphi_k(\btheta, \, x) = \exp\{ \alpha_k + \bbeta_k^\T \q(x)\}$,
$k=0, \ldots, m$,
and
\begin{equation} \label{eq:def}
\begin{split}
  \h(\btheta, \, x)
  &=
  ( \rho_1 \varphi_1(\btheta, \, x), \, \ldots, \, \rho_m  \varphi_m(\btheta, \, x)
  )^\T,
  \\ 
  s(\btheta, \, x)
  &=
  \rho_0
  +
  \sum_{k=1}^{m} \rho_k  \varphi_k(\btheta, \, x),
  \\
  H(\btheta, \, x)
  &=
  \diag\{\h(\btheta, \, x)\}
  -
  \h(\btheta, \, x) \h^\T(\btheta, \, x) /{s(\btheta, \, x)}.
\end{split}
\end{equation}
Let $\E_0(\cdot)$ be the expectation operator with respect to $F_0$.
Then, the blockwise algebraic expressions of the
information matrix $U$ in terms of $H(\btheta^*, \, x)$ and $\q(x)$
can be written as
\begin{equation}
\label{eq:Uexpression}
\begin{split}
  U_{\balpha \balpha}
  &=
  \E_0 \big\{H(\btheta^*, \, x)\big\},
  \\
  U_{\bbeta \bbeta}
  &=
   \E_0 \Big\{ H(\btheta^*, \, x)\otimes \big(\q(x) \q^\T(x)\big) \Big\},
  \\
  U_{\balpha \bbeta}
  &=
  U_{\bbeta \balpha}^\T
  =
  \E_0 \big\{H(\btheta^*, \, x)\otimes \q^\T(x)\big\},
\end{split}
\end{equation}
where $\otimes$ is the Kronecker product operator.
We partition the Jacobian matrix of $\g(\bbeta)$ evalueated
at $\bbeta^*$,
$\bigtriangledown = \partial \g(\bbeta^*)/\partial \bbeta$,
into \mbox{$(\bigtriangledown_1, \, \bigtriangledown_2)$},
with $q$ and $md-q$ columns respectively.
Without loss of generality, we assume that
$\bigtriangledown_1$ has a full rank.
Let $I_k$ be an identity matrix of size $k\times k$ and
$
J^\T  =
 (-(\bigtriangledown_1^{-1} \bigtriangledown_2)^\T, \,  I_{md-q})
$.

\begin{theorem} \label{thm:localPower}
  Under the conditions of Theorem \ref{thm:DELRT} and 
local alternative defined by (\ref{eq:localAlt}),
  \begin{align*}
    R_n \to \chi^2_{q}(\delta^2)
  \end{align*}
  in distribution as $n \to \infty$,  
where $\chi^2_q(\delta^2)$ is a non--central chi--square
  random variable with $q$ degrees of freedom and a
  nonnegative non--central parameter
  \begin{align*}
    \delta^2
    =
    \left\{ \begin{array}{cc}
      \etab^\T
      \big\{
        \Lambda
       - \Lambda J \big(J^\T \Lambda J\big)^{-1} J^\T \Lambda
      \big\}
      \etab,
      & \text{if } q < md
      \\
      \etab^\T \Lambda \etab,
      & \text{if } q = md
    \end{array} \right.
  \end{align*}
 where
  $
    \etab^\T
    = 
    ( \rho_1^{-1/2} \bc_1^\T, \,
      \rho_2^{-1/2} \bc_2^\T, \,
      \ldots,
      \rho_m^{-1/2} \bc_m^\T )
  $
  and
  $
  \Lambda
  =
  U_{\bbeta \bbeta} -  U_{\bbeta \balpha} U_{\balpha \balpha}^{-1}
  U_{\balpha \bbeta}$.

  Moreover, $\delta^2 > 0$ except when $\etab$ is in the
  column space of $J$.
\end{theorem}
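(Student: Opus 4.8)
The plan is to reduce $R_n$ to a quadratic form in an asymptotically normal vector through a local quadratic (LAN-type) expansion of the concave dual log-likelihood $l_n$ about the null value $\btheta^*$, and then to profile out the nuisance block $\balpha$ so that the effective information becomes the Schur complement $\Lambda$. Writing $b=\sqrt n(\btheta-\btheta^*)$, I would first establish
\[
  l_n(\btheta^*+b/\sqrt n)=l_n(\btheta^*)+b^\T Z_n-\tfrac12 b^\T U b+o_p(1)
\]
uniformly on compact sets of $b$, where $Z_n=n^{-1/2}\partial l_n(\btheta^*)/\partial\btheta$ and $U_n\to U$; this uses the moment and positive-definiteness conditions inherited from Theorem~\ref{thm:DELRT}, while the concavity of $l_n$ forces $\sqrt n(\hat\btheta-\btheta^*)$ and $\sqrt n(\tilde\btheta-\btheta^*)$ to be $O_p(1)$ so that the expansion is legitimate at both maximizers. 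The one new ingredient relative to the null case is the limiting law of $Z_n$ under the local alternative (\ref{eq:localAlt}). Letting $\btheta^{(n)}$ denote the true DRM parameter under the alternative, I would write $Z_n=n^{-1/2}\partial l_n(\btheta^{(n)})/\partial\btheta+U\sqrt n(\btheta^{(n)}-\btheta^*)+o_p(1)$; the first term is asymptotically $N(\boldsymbol 0,U)$ by the score central limit theorem, whence $Z_n\to N(U\tau,U)$ with $\tau=\lim\sqrt n(\btheta^{(n)}-\btheta^*)$ whose $\bbeta$-block equals $\etab$ (because $\sqrt n(\bbeta_k-\bbeta_k^*)=\rho_k^{-1/2}\bc_k+o(1)$) and whose $\balpha$-block is fixed by the DRM normalization.

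I would then profile out $\balpha$. Since $\g(\bbeta^*)=\boldsymbol 0$, the constraint $\g(\bbeta)=\boldsymbol 0$ linearizes to $\bigtriangledown b_\bbeta=\boldsymbol 0$, and it involves only $\bbeta$; as $\balpha$ is free in both the constrained and unconstrained problems, maximizing the quadratic over $b_\balpha$ replaces $U$ by $\Lambda$ and $Z_n$ by the adjusted score $W_n=Z_{n,\bbeta}-U_{\bbeta\balpha}U_{\balpha\balpha}^{-1}Z_{n,\balpha}$. A short computation shows the unknown $\balpha$-part of $\tau$ cancels, leaving $W_n\to W\sim N(\Lambda\etab,\Lambda)$. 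Using the identity that, for a quadratic with Hessian $\Lambda$, the gap between the unconstrained and linearly constrained maxima equals $(\hat b-\tilde b)^\T\Lambda(\hat b-\tilde b)$, Lagrange multipliers yield
\[
  R_n\to W^\T\Lambda^{-1}\bigtriangledown^\T(\bigtriangledown\Lambda^{-1}\bigtriangledown^\T)^{-1}\bigtriangledown\Lambda^{-1}W .
\]
Setting $Y=\Lambda^{-1/2}W\sim N(\Lambda^{1/2}\etab,I)$, the central matrix becomes an orthogonal projection of rank $q$, so $R_n\to Y^\T P Y\sim\chi^2_q(\delta^2)$ with $\delta^2=\etab^\T\bigtriangledown^\T(\bigtriangledown\Lambda^{-1}\bigtriangledown^\T)^{-1}\bigtriangledown\etab$. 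When $q=md$ the constraint forces $b_\bbeta=\boldsymbol 0$, the projection is the identity, and $\delta^2=\etab^\T\Lambda\etab$, which is the second branch.

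It remains to match this to the stated $J$-form and to prove the positivity claim. Because $\bigtriangledown J=\boldsymbol 0$, the columns of $\Lambda^{1/2}J$ and those of $\Lambda^{-1/2}\bigtriangledown^\T$ are mutually orthogonal and together span $\mathbb R^{md}$, so $I-P$ is the orthogonal projection onto $\mathrm{col}(\Lambda^{1/2}J)$; transferring this back through $\Lambda^{\pm1/2}$ gives the identity $\bigtriangledown^\T(\bigtriangledown\Lambda^{-1}\bigtriangledown^\T)^{-1}\bigtriangledown=\Lambda-\Lambda J(J^\T\Lambda J)^{-1}J^\T\Lambda$, which is the first branch. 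Since $(\bigtriangledown\Lambda^{-1}\bigtriangledown^\T)^{-1}$ is positive definite, $\delta^2=0$ precisely when $\bigtriangledown\etab=\boldsymbol 0$, i.e.\ when $\etab$ lies in the null space of $\bigtriangledown$, which equals $\mathrm{col}(J)$; otherwise $\delta^2>0$. I expect the main obstacle to be the rigorous justification of the LAN expansion together with the drift term of $Z_n$ under the alternative — controlling the empirical-process remainder uniformly and verifying the score limit at the moving value $\btheta^{(n)}$ — after which the profiling, the projection algebra, and the positivity are routine linear algebra.
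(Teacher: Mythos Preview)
Your overall architecture --- quadratic expansion of $l_n$, profiling out $\balpha$ to land on the Schur complement $\Lambda$, and identifying $R_n$ with a rank-$q$ projection applied to an asymptotically normal vector --- matches the paper. Two genuine methodological differences: (i) the paper obtains the limiting law of the score under the local alternative via Le~Cam's third lemma (joint normality of $\bv$ and the log likelihood ratio under the null $\{F_k\}$, then contiguity), whereas you Taylor-expand the score at the moving truth $\btheta^{(n)}$; (ii) the paper parametrizes the null constraint through the implicit-function map $J$ and works directly with $\Lambda^{-1}-J(J^\T\Lambda J)^{-1}J^\T$, while you use the Lagrange-multiplier $\bigtriangledown$-form and then convert. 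Your conversion identity $\bigtriangledown^\T(\bigtriangledown\Lambda^{-1}\bigtriangledown^\T)^{-1}\bigtriangledown=\Lambda-\Lambda J(J^\T\Lambda J)^{-1}J^\T\Lambda$ is correct, and your positivity argument via $\ker\bigtriangledown=\mathrm{col}(J)$ is cleaner than the paper's.

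There is, however, a real gap in your step ``the first term is asymptotically $N(\boldsymbol 0,U)$ by the score central limit theorem,'' and hence in the claim $Z_n\to N(U\tau,U)$. The dual empirical likelihood $l_n$ is \emph{not} a genuine likelihood, and the information equality fails: the paper's Lemma~\ref{lemma:propScore} shows that $\bv=n^{-1/2}\partial l_n(\btheta^*)/\partial\btheta$ has asymptotic variance $V=U-UWU$ with $W=\diag(T,0)$, not $U$. You are rescued only by the algebraic accident that $(-U_{\bbeta\balpha}U_{\balpha\balpha}^{-1},\,I_{md})\,UWU\,(-U_{\bbeta\balpha}U_{\balpha\balpha}^{-1},\,I_{md})^\T=0$ (because $W$ lives entirely in the $\balpha$-block), so the profiled variance of $W_n$ is $\Lambda$ either way; but your proof as written asserts a false intermediate distribution for $Z_n$. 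You must either compute $V$ correctly and then profile, or bypass the issue via Le~Cam as the paper does. A secondary point: your route through the score at the drifting $\btheta^{(n)}$ needs a triangular-array CLT, which you should state explicitly; the Le~Cam route avoids this by working under the fixed $\{F_k\}$ throughout.
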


The proof is given in the supplementary material.
The following example demonstrates one usage of this result:
computing local power of the DELRT test under a given
distributional setting.

\begin{example}[Computing the local power of the DELR test
  for a composite hypothesis]
\label{example:localPow}
Consider the situation where $m + 1 = 3$
samples are from a DRM with
basis function $\q(x) = (x, \, \log x)^\T$,
and the sample proportions are $(0.4, \, 0.3, \, 0.3)$.
Let $F_k$, $k=1,\,2$, be the distributions
with parameters $\bbeta_1^* = (-1, \, 1)^\T$ and
$\bbeta_2^* = (-2, \, 2)^\T$.
Suppose $H_0$ is
\mbox{$\g(\bbeta) = 2\bbeta_1 - \bbeta_2 = 0$}.
Consider the local  alternative 
\begin{align} \label{eq:localAltExample}
  \bbeta_k = \bbeta_k^* + n_k^{-1/2} \bc_k, \text{ for } k = 1, \, 2,
\end{align}
with $\bc_1 = (2, \, 3)^\T$ and $\bc_2=(-1, \, 0)^\T$.

Under the above settings, we find
$\bigtriangledown = (2 I_2, \allowbreak \, - I_2)$ so
$J = ((1/2) I_2, \allowbreak \, I_2)$, and
$\etab \approx (3.65, \allowbreak \, 5.48, \allowbreak \,
-1.83, \allowbreak \, 0)^\T$.
The information matrix $U$ is $F_0$ dependent.
When $F_0$ is $\Gamma(2, \, 1)$,  where
in general $\Gamma(\lambda, \, \kappa)$ 
denotes the gamma distribution  
with shape $\lambda$ and rate $\kappa$,
we obtain the information matrix
\eqref{eq:Uexpression} and hence $\Lambda$,
based on numerical computation. 
We therefore get $\delta^2 \approx 10.29$.

Let $\chi^2_{d, \, p}$ denotes the $p^{th}$ quantile of the
$\chi^2_d$ distribution.
The null limiting distribution of $R_n$ is $\chi^2_2$.  
Thus at the $5\%$ significance level, the null
hypothesis is rejected when $R_n \ge \chi^2_{2, \, 0.95}
\approx 5.99$.
Therefore at the current local alternative, the power of the
DELR test is approximately
$P( \chi^2_2(10.29) \ge 5.99) \approx 0.83$.
%
\end{example}

Theorem \ref{thm:localPower} is also useful for
sample size calculation as demonstrated
in the following example.

\begin{example}[Sample size calculation for Example
  \ref{example:localPow}]
\label{example:localPowSampleSize}

Adopt the settings of Example
\ref{example:localPow}.
Suppose we require the power of the DELR test to be at least
$0.8$ at the alternative of
$\bbeta_1=\bbeta_1^*+(0.5,\, 1.5)^\T$
and $\bbeta_2 = \bbeta_2^* + (0.5, \, 0.5)^\T$
at the $5\%$ significance level.
This alternative corresponds to a local alternative of
the form \eqref{eq:localAltExample} with
with
$\bc_1 = {(0.5\sqrt{n_1}, \, 1.5\sqrt{n_1})}^\T
= 0.5{(\sqrt{0.3 n}, \, 3\sqrt{0.3 n})}^\T$
and
$\bc_2 = {(0.5\sqrt{n_2}, \, 0.5\sqrt{n_2})}^\T
= 0.5{(\sqrt{0.3 n}, \, \sqrt{0.3 n})}^\T$.

Using the above $\bc_1$ and $\bc_2$ and recalling
the sample proportions are $(0.4, \, 0.3, \, 0.3)$,
we obtain
$\etab
= {(0.3^{-1/2} \bc_1^\T, \, 0.3^{-1/2} \bc_2^\T)}^\T
= 0.5\sqrt{n}{(1, \, 3, \, 1, \, 1)}^\T$
as a function of the total sample size $n$.
With the same $J$, $F_0$ and $U$ as obtained in Example
\ref{example:localPow}, and applying the formula given
in Theorem \ref{thm:localPower}, we obtain the non--central
parameter $\delta^2(n)$ as a function of $n$. 
We find that when $n\ge 50$,
\begin{align*}
  \Prob( \chi^2_2(\delta^2(n)) \ge \chi^2_{2, \, 0.95})
  \ge 0.8.
\end{align*}
\end{example}

Moreover, Theorem \ref{thm:localPower} is an effective tool
for comparing the local powers of DELR tests formulated in
different ways. The comparison helps us to determine the
most efficient use of information contained in multiple
samples. The point is discussed in the next section.

\section{Power properties of the DELR test under
the DRM}
\label{sec:NumSample}

Our use of DRM is motivated by its ability to pool
information across a number of samples. We believe the
resulting inferences are more efficient than inferences
based on individual samples. Moreover, strong evidence about
this improved efficiency already exists.
\citet{Fokianos2004} obtained more efficient density
estimators under the DRM than the classical kernel density
estimators based on individual samples; \citet{Chen2013}
found DRM--based quantile estimators to be more efficient
than the empirical quantile estimators. Thus an efficiency
advantage for DRM--based hypothesis tests is anticipated.
This section provides rigorous support for this conjecture.

We adopt the setting posited above for multiple samples from
distributions satisfying the DRM assumption. Yet a
hypothesis of interest may well focus on a characteristic of
just a subset of these populations. If so, why should our
tests be based on all the samples? One answer is found in
their improved local powers as we now demonstrate.

Without loss of generality, consider a null hypothesis
regarding subpopulations $F_0, \, \ldots, \, F_r$
with $r < m$ and let
$
  \bzeta^\T = ( \bbeta_1^\T, \, \ldots, \, \bbeta_r^\T )
$.
The composite hypotheses are specified as
\begin{align} \label{hp:comparison}
  H_0: \ \g( \bzeta) = {\bf 0}
  \ \ \  \text{against} \ \ \  
  H_1: \  \g( \bzeta) \neq {\bf 0}
\end{align}
for some smooth function $\g: \mathbb{R}^{rd} \to \mathbb{R}^{q}$
with $q \le rd$.
A DELR test can be based either on samples from just
$F_0, \, \ldots, \, F_r$, or on the samples from all
the populations $F_0, \, \ldots, \, F_m$.
We denote the corresponding test statistics as $R_n^{(1)}$ and
$R_n^{(2)}$, respectively, for ease of exposition.

Theorem \ref{thm:DELRT} implies that, under the null model
of \eqref{hp:comparison}, $R_n^{(1)}$ and $R_n^{(2)}$ have
the same $\chi^2_{q}$ distribution in the limit. But how do
their asymptotic powers compare to each other? As is well
known, most sensible tests are consistent: the asymptotic
power at any fixed alternative model goes to $1$ as the
sample size $n \to \infty$. Hence, meaningful power
comparisons are often carried out by simulation, or by
assessing their asymptotic powers at local alternatives.

Theorem \ref{thm:localPower} provides a useful tool for the
latter approach. That theorem implies that $R_n^{(1)}$ and
$R_n^{(2)}$ have non--central chi--square limiting
distributions with the same $q$ degrees of freedom, however
with possibly different non--central parameter values at a
local alternative.
By a standard result in distribution theory
\citep[(29.25a)]{Johnson1995}, if two non--central
chi--square distributions have the same degrees of freedom,
then the one with the greater non--central parameter
stochastically dominates the one with the smaller
non--central parameter.
Therefore, a power comparison of $R_n^{(1)}$ and $R_n^{(2)}$
can be carried out by comparing their corresponding
non--central parameters $\delta_1^2$ and $\delta_2^2$: if
$\delta_2^2 > \delta_1^2$, then  $R_n^{(2)}$ is more
powerful than $R_n^{(1)}$ at all significance levels, and
vice versa. The following theorem, whose proof is given in
the supplementary material,
implements this idea and provides that power comparison.

\begin{theorem} 
\label{thm:pwComp}
Adopt the conditions of Theorem \ref{thm:DELRT}.
Consider the composite hypothesis \eqref{hp:comparison}
and the local alternative
\begin{align} \label{eq:localAltCompare}
  \bbeta_k
  =
  \left\{ \begin{array}{cl}
  \bbeta_k^* + n_k^{-1/2} \bc_k,
  &\text{ for } k = 1, \ldots, \, r
  \\
  \bbeta_k^*,
  &\text{ for } k=r+1, \ldots, \, m
  \end{array}\right.
\end{align}
with some given constants $\{ \bc_k \}$.
Let $\delta_1^2$ and  $\delta_2^2$ be non--central
parameter values of the limiting distribution of
$R_n^{(1)}$ and $R_n^{(2)}$ under the local alternative
model. Then  $\delta_2^2 \ge \delta_1^2$.
\end{theorem}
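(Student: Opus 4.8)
The plan is to reduce the claim $\delta_2^2 \ge \delta_1^2$ to a single Loewner-order inequality between two asymptotic covariance matrices, and then to establish that inequality by an information-additivity argument. First I would rewrite the non-central parameter of Theorem \ref{thm:localPower} in an equivalent ``constraint'' form. Using $\bigtriangledown J = 0$ and that $(a,b)\mapsto \Lambda^{-1}\bigtriangledown^\T a + Jb$ is a bijection of $\mathbb{R}^{md}$, one checks the matrix identity
\begin{align*}
  \Lambda - \Lambda J(J^\T\Lambda J)^{-1}J^\T\Lambda
  =
  \bigtriangledown^\T\big(\bigtriangledown\Lambda^{-1}\bigtriangledown^\T\big)^{-1}\bigtriangledown ,
\end{align*}
so that in either scenario $\delta^2 = (\bigtriangledown\etab)^\T(\bigtriangledown\Lambda^{-1}\bigtriangledown^\T)^{-1}(\bigtriangledown\etab)$, with $\Lambda^{-1}$ the asymptotic covariance of the MELE of $\bbeta$ (this form also unifies the two cases $q<md$ and $q=md$ of Theorem \ref{thm:localPower}). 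The form is what makes the comparison transparent, since $\bigtriangledown\Lambda^{-1}\bigtriangledown^\T$ sees $\Lambda^{-1}$ only through the block corresponding to the parameters $\bzeta=(\bbeta_1^\T,\ldots,\bbeta_r^\T)^\T$ that actually enter $\g$.

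Next I would dispose of the bookkeeping caused by the two tests using different totals $N_1=\sum_{k\le r}n_k$ and $N_2=n$. With $c=N_2/N_1$ one has $\rho_k^{(1)}=c\,\rho_k^{(2)}$ for $k\le r$; from \eqref{eq:def}--\eqref{eq:Uexpression} this gives $\Lambda^{(1)}=c\,\tilde\Lambda$, where $\tilde\Lambda$ is the scenario-2 information block for $\bzeta$ computed with the within-$\{0,\ldots,r\}$ normalisation $\tilde s=\sum_{k\le r}\rho_k\varphi_k$, while $\etab_1=c^{-1/2}\tilde\etab_2$. Because $\g$ and the perturbation involve only the first $r$ samples, $\bigtriangledown^{(2)}=(\bigtriangledown^{(1)},\mathbf{0})$ and $\bigtriangledown^{(2)}\etab_2=\bigtriangledown^{(1)}\tilde\etab_2=:\bv$; substituting, every factor of $c$ cancels and
\begin{align*}
  \delta_1^2 = \bv^\T\big(\bigtriangledown^{(1)}\,\tilde\Lambda^{-1}\,(\bigtriangledown^{(1)})^\T\big)^{-1}\bv,
  \qquad
  \delta_2^2 = \bv^\T\big(\bigtriangledown^{(1)}\,\Sigma_2\,(\bigtriangledown^{(1)})^\T\big)^{-1}\bv,
\end{align*}
where $\Sigma_2$ is the $\bzeta$-block of $(\Lambda^{(2)})^{-1}$. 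Since $C\mapsto \bv^\T(\bigtriangledown^{(1)}C(\bigtriangledown^{(1)})^\T)^{-1}\bv$ is antitone in the Loewner order, the theorem reduces to the single inequality $\Sigma_2\preceq\tilde\Lambda^{-1}=:\Sigma_1$: in a common $\sqrt n$-normalisation, the MELE of the shared parameters $\bzeta$ has no larger asymptotic covariance when all samples are used.

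The covariance inequality is the heart of the matter, and the conceptual reason it holds is efficiency: the scenario-2 MELE is the MLE of the associated prospective multinomial-logistic model and is asymptotically efficient for $\bzeta$ there, whereas the scenario-1 MELE, being a smooth function of a subset of the same data, is a regular but generally inefficient estimator of $\bzeta$ in the \emph{same} model, so the Cram\'er--Rao/convolution bound gives $\Sigma_2\preceq\Sigma_1$. To turn this into the algebraic statement one actually verifies, I would factorise the pooled log-likelihood by grouping labels into $\{0,\ldots,r\}$ and $\{r+1,\ldots,m\}$: writing $\pi_k=\rho_k\varphi_k/s$ and $P_1=\tilde s/s$, each label-likelihood splits as $\log\pi_K=\log(\pi_K/P_1)+\log P_1$ on the first group (and analogously on the second). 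The within-first-group factor reproduces exactly the scenario-1 information $\tilde\Lambda$ after the $\balpha$-profiling, the within-second-group factor carries no $\bzeta$, and the membership factor $\log P_1$ contributes the extra, non-negative information about $\bzeta$ that the added samples supply through the shared baseline. As these score components are orthogonal given group membership, the informations add, and Schur-complementing out every nuisance coordinate ($\balpha$ and $\bbeta_{r+1},\ldots,\bbeta_m$) yields $\Sigma_2^{-1}\succeq\Sigma_1^{-1}$.

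The step I expect to be the main obstacle is precisely this information inequality, because two effects pull against each other: the extra samples add information, but they also introduce the nuisance parameters $\bbeta_{r+1},\ldots,\bbeta_m$ whose elimination removes information, and the normalisations differ ($\tilde s$ versus $s$). The delicate point is to show the net effect after all profiling is non-negative; doing so cleanly requires either the orthogonal-decomposition/Schur-complement computation above, carried out so that the $\tilde s$-versus-$s$ mismatch and the $c$-rescaling are tracked consistently, or an appeal to the asymptotic efficiency of the pooled MLE together with a verification that the scenario-1 estimator is regular in the full model.
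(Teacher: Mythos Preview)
Your proposal is correct but follows a genuinely different route from the paper. Both arguments ultimately rest on the same core inequality---in your notation $\Sigma_2\preceq\tilde\Lambda^{-1}$, which is exactly the paper's Lemma~\ref{lemma:estComp}(2), $\Lambda/\Lambda_c\ge\rho\tilde\Lambda$, once one notes that $\Sigma_2=(\Lambda/\Lambda_c)^{-1}$ by the block-inverse formula and that your $\tilde\Lambda$ equals the paper's $\rho\tilde\Lambda$. The substantive difference lies in how each reaches that reduction. The paper stays with the $J$-parametrisation of Theorem~\ref{thm:localPower}, computes the upper-left block of $\Lambda-\Lambda J_2(J_2^\T\Lambda J_2)^{-1}J_2^\T\Lambda$ as $(\Lambda/\Lambda_c)-(\Lambda/\Lambda_c)J(J^\T(\Lambda/\Lambda_c)J)^{-1}J^\T(\Lambda/\Lambda_c)$, and then needs a separate matrix inequality (Lemma~\ref{lemma:generalizedIneq}) to pass from $\Lambda/\Lambda_c\ge\rho\tilde\Lambda$ to the required comparison of these projector-type expressions. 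Your constraint-form identity $\Lambda-\Lambda J(J^\T\Lambda J)^{-1}J^\T\Lambda=\bigtriangledown^\T(\bigtriangledown\Lambda^{-1}\bigtriangledown^\T)^{-1}\bigtriangledown$ bypasses that entire detour: once $\delta^2=(\bigtriangledown\etab)^\T(\bigtriangledown\Lambda^{-1}\bigtriangledown^\T)^{-1}(\bigtriangledown\etab)$, the antitonicity of $C\mapsto(\bigtriangledown C\bigtriangledown^\T)^{-1}$ reduces the theorem to the single Loewner inequality directly, and Lemma~\ref{lemma:generalizedIneq} becomes unnecessary. This is a real simplification. For the key inequality itself, the paper proceeds by explicit matrix algebra---a direct positive-semidefiniteness computation for $U_2/U_{2,c}-\rho U_1$ and then the Schur-complement quotient formula---whereas you sketch an efficiency/information-additivity argument via the multinomial-logistic factorisation. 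Your conceptual argument is sound and illuminates \emph{why} the inequality holds, but as you correctly anticipate, carrying it through rigorously (tracking the $\tilde s$ versus $s$ normalisations and verifying the claimed score orthogonality after profiling out all nuisance parameters) is where the real work lies; the paper's purely algebraic route, though less intuitive, sidesteps those regularity checks.
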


\begin{example}[Effect of information pooling by DRM on the
  local power of the DELR test]
\label{example:powComp}
Consider the situation where $m + 1 = 3$, samples are 
from a DRM with basis function $\q(x) = {(x, \, x^2)}^\T$,
and the sample proportions are $(0.5, \, 0.25, \, 0.25)$.
Let $F_k$, $k=1,\,2$, be the distributions
with parameters $\bbeta_1^* = (6, \, -1.5)^\T$ and
$\bbeta_2^* = (-0.25, \, 0.375)^\T$.
Suppose $H_0$ is given by
$\g(\bzeta) = \bbeta_1 - (6, \, -1.5)^\T = \boldsymbol{0}$,
and the local alternative is
\begin{align*}
\bbeta_1 = \bbeta_1^* + n_1^{-1/2} \bc_1;
~~
\bbeta_2 = \bbeta_2^*
\end{align*}
with  $\bc_1 = (2, \, 2)^\T$.

Let $R_n^{(1)}$ and $R_n^{(2)}$ be the DELR test statistics
based on $F_0, F_1$,
and on $F_0, F_1, F_2$, respectively.
When $F_0$ is, $N(0, \, 1)$, the standard normal distribution, we
obtain information matrices \eqref{eq:Uexpression}, and hence
$\Lambda = U_{\bbeta \bbeta} -
U_{\bbeta \balpha} U_{\balpha \balpha}^{-1} U_{\balpha \bbeta}$,
for $R_n^{(1)}$ and $R_n^{(2)}$ based on numerical computation.
For $R_n^{(1)}$, we get
$\etab = (4, \, 4)^\T$ and $q = d =2$.
Then by Theorem \ref{thm:localPower}, we find
$\delta_1^2 = \etab^\T \Lambda \etab \approx 5.90$.
For $R_n^{(2)}$,
we find $\etab = (4, \, 4, \, 0, 0)^\T$,
$\bigtriangledown = (I_2, \, 0_{2\times2})$,
and $J = (0_{2\times2}, \, I_2)^\T$.
By Theorem \ref{thm:localPower},
we get $\delta_2^2 \approx 6.67$.
Now, since $\delta_1^2 \approx 5.90 < \delta_2^2 \approx 6.67$,
$R_n^{(2)}$ is more powerful than $R_n^{(1)}$ even though the null
hypothesis concerns the parameter of just population $1$.

At the $5\%$ significance level, the powers of $R_n^{(1)}$
and $R_n^{(2)}$ are approximately $0.577$ and $0.633$,
respectively.
\end{example}

\section{Simulation studies}
\label{sec:Simulation}

We conducted simulations to study: (1) the approximation accuracy of
the limiting distributions to the finite--sample distributions of the
DELR statistic under both the null and the alternative models, (2) the
power of the DELR test under correctly specified and also misspecified
DRMs, and (3) the effect of the number of samples used in the DRM to
the local asymptotic power of the DELR test.
The number of simulation runs is set to $10,000$.
Our simulation is more extensive than what are presented
in terms of hypothesis, population distribution, and sample sizes.
We selected the most representative ones and included them 
here; but the other results are similar.
All computations are carried out by our R package
{\emph{drmdel}}
for EL inference under DRMs, which  is available on the Comprehensive
R Archive Network
(CRAN).

\subsection{Approximation to the distribution of the
DELR under the null model}

We first study how well the chi--square distribution
approximates the finite--sample distribution of the DELR statistic under the
null hypothesis of (\ref{hp:composite}).
Set $m + 1 = 6$ and consider the hypothesis with
$\g(\bbeta) = (\bbeta_1^\T, \, \bbeta_3^\T) - (\bbeta_2^\T, \, \bbeta_4^\T)$.
The null hypothesis is
equivalent to $F_1 = F_2$ and $F_3 = F_4$. 
We generate two sets of six samples of sizes
$(90, \, 60, \, 120, \, 80, \, 110, \, 30)$
from two different distribution families, respectively.
The first set of samples are from normal distributions with means
$(0, \allowbreak \, 2, \allowbreak \, 2, \allowbreak \, 1,
\allowbreak \, 1, \allowbreak \, 3.2)$
and standard deviations
$(1, \allowbreak \, 1.5, \allowbreak \, 1.5, \allowbreak \, 3,
\allowbreak \, 3, \allowbreak \, 2)$.
The second set of samples are from gamma distributions with shapes
$(3, \allowbreak \, 4, \allowbreak \, 4, \allowbreak \, 5,
\allowbreak \, 5, \allowbreak \, 3.2)$
and rates
$(0.5, \allowbreak \, 0.8, \allowbreak \, 0.8, \allowbreak \, 1.1,
\allowbreak \, 1.1, \allowbreak \, 1.5)$.

When the basis function $\q(x)$ is correctly specified,
i.e. $\q(x) = {(x, \, x^2)}^\T$ for the normal family
and $\q(x) = {(\log x, \, x)}^\T$ for gamma family,
the DELR statistic, $R_n$, has a $\chi^2_4$ null limiting distribution
in both cases. 
The quantile--quantile (Q--Q) plots of the distribution of $R_n$ and
$\chi^2_4$ are shown in Figure \ref{fig:null}. 
In both cases, the approximations are very accurate. 
The type I error rates of $R_n$ at $5\%$ level
are 0.056 and 0.058 for normal and gamma data respectively.

\begin{figure}[!ht]
\begin{center}
    \includegraphics[scale=.35]{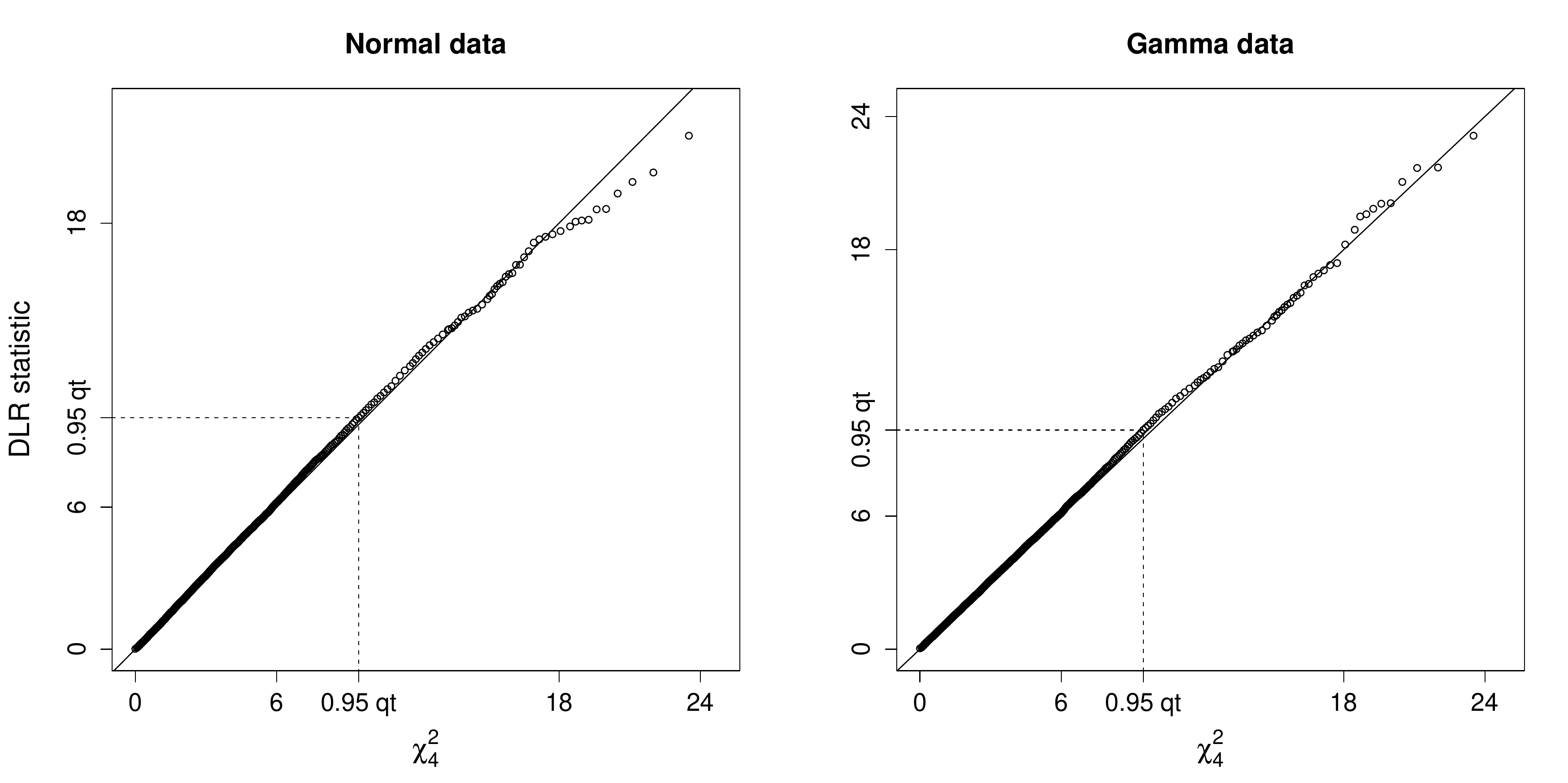}
    \caption{Q--Q plots of the simulated and the
    null limiting distributions of the DELR statistic.}
    \label{fig:null}
\end{center}
\end{figure}

In unreported extensive simulation studies under various
settings, we find that in general
the chi--square approximation has satisfactory precision when 
$n_k \geq 10qd$.
When $n_k$ is much smaller, a bootstrap or permutation test
based on the DELR statistic can be served as an alternative.

\subsection{Approximation to the distribution of the
DELR under local alternatives}

We next examine the precision of the non--central chi--square
distribution under the local alternative model (\ref{eq:localAlt}).
We set $m + 1 = 4$ with sample sizes  $120$, $160$, $80$ and $60$.

In the first scenario, we test the hypothesis (\ref{hp:composite}) with
$\g(\bbeta) = \bbeta_1^\T - \bbeta_2^\T$.
The perceived null model is specified by 
$\bbeta_1^* = \bbeta_2^* = (0.25, \, 1.875)^\T$,
$\bbeta_3^* = (0.125, \, 1.97)^\T$ 
with basis function $\q(x) = (x, x^2)^\T$.
The data were generated from $G_0 =N(0, \, 0.5^2)$, $G_1$ and $G_3$
with $\bbeta_1^*$ and $\bbeta_3^*$ respectively,
and $G_2$ with $\bbeta_2 = \bbeta_2^* + n_2^{-1/2}(1, \, 0)^\T$.
According to Theorem \ref{thm:localPower}, the limiting distribution of 
$R_n$ is $\chi_2^2(2.67)$.

In the second scenario, we test (\ref{hp:composite}) with
$\g(\bbeta) = (\bbeta_1^\T, \, \bbeta_3^\T) - (\bbeta_2^\T, \, (-6, \, 9)^\T)$.
The perceived null model is specified by $\bbeta_1^* = \bbeta_2^* = (-4, \, 5)^\T$, 
$\bbeta_3^* = (-6, \, 9)^\T$ with basis function $\q(x) = (\log x, x)^\T$.
We generated data from $G_0 = \Gamma(3, \, 2)$ and $G_k$, $k = 1, \, 2, \, 3$,
specified by (\ref{eq:localAlt}) with $\bc_1=(0.5, \, 0.5)^\T$, 
$\bc_2 = (1, \, 1)^\T$ and $\bc_3 = (2, \, 2)^\T$.
According to Theorem  \ref{thm:localPower}, the limiting
distribution of $R_n$ is $\chi_4^2(1.80)$.

The Q--Q plots under the two scenarios are shown in Figure \ref{fig:alt}.
It is clear the non--central chi--square limiting distributions approximate
these of of $R_n$ very well.
In unreported simulation studies under various settings, we find the
approximation of the non--central chi--square is generally
satisfactory when $n_k \geq 15qd$.

\begin{figure}[!ht]
\begin{center}
    \includegraphics[scale=.35]{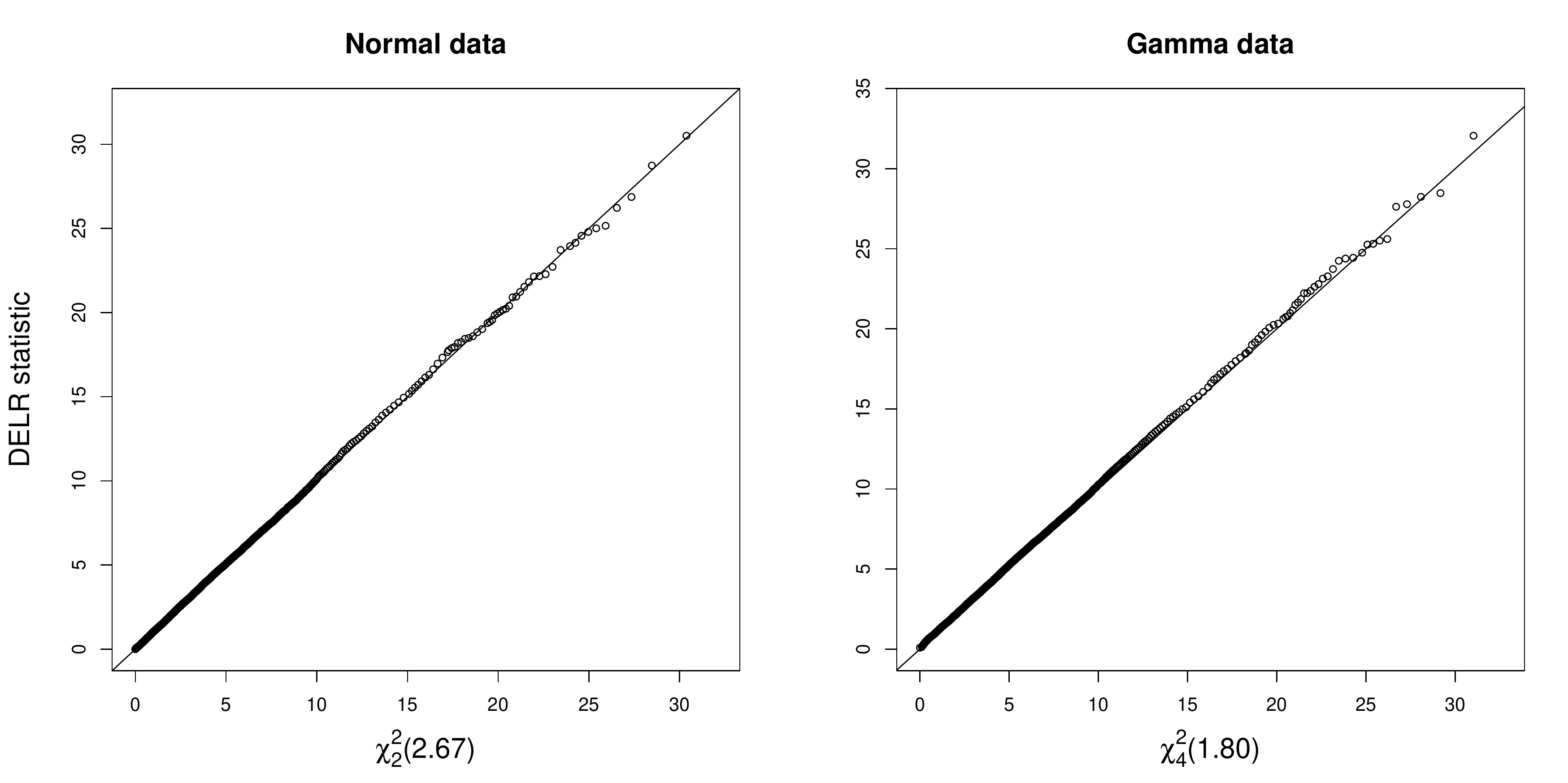}
    \caption{Q--Q plots of the distributions of the DELR statistics under the
      local alternative model against the corresponding asymptotic theoretical
      distributions.}
    \label{fig:alt}
\end{center}
\end{figure}

\subsection{Power comparison}
\label{subsec:pwComp}

We now compare the power of the DELR test (DELRT) with a
number of popular methods for detecting differences between
distribution functions, testing $H_0: F_0 = F_1 = \ldots = F_m$.
This is the same as (\ref{hp:composite}) with $\g(\bbeta) = \bbeta$.
We use the nominal level of 5\%.

The competitors include the Wald test based on
DRM (Wald) \citep[(17)]{Fokianos2001}, 
one--way analysis of variance (ANOVA),
the Kruskal--Wallis rank--sum test (KW) \citep{Wilcox1995},
the k--sample Anderson--Darling test (AD)
\citep{Scholz1987}, and the likelihood ratio test based on
the partial likelihood under the CoxPH
when observations are intrinsically positive.


The Wald test is based on test statistic
$n \hat \bbeta^\T \hat \Sigma^{-1} \hat \bbeta$
with $\hat \Sigma$ being a consistent estimator of the
asymptotic covariance matrix of $\hat \bbeta$. It uses a
chi--square reference distribution. KW is a rank--based
nonparametric test for equal population medians. AD is a
nonparametric test based on the quadratic distances of
empirical distribution functions for equal population
distributions. 
Under the CoxPH, we utilized $m$ dummy covariates
for data analysis. The corresponding 
likelihood ratio based on the partial
likelihood has a $\chi^2_m$ limiting distribution.


We first compare their powers based on normal data with $m +
1 = 2$ and sample sizes $n_0=30$ and $n_1=40$. We consider
two different scenarios for alternatives both having $F_0 =
N(0, \, 2^2)$. In the first scenario, $F_1= N(\mu, 2^2)$
with $\mu$ increasing in absolute value in a sequence of
simulation experiments.
In the second scenario, we consider seven parameter settings
(settings 0--6)
for $F_1 = N(\mu, \sigma^2)$
with $\mu$ and $\sigma$
taking values in 
$(0, \allowbreak \, 0.05, \allowbreak \, 0.1, \allowbreak \, 0.15,
\allowbreak \, 0.25, \allowbreak \, 0.36, \allowbreak \, 0.55)$
and
$(2, \allowbreak \, 1.9, \allowbreak \, 1.8, \allowbreak \, 1.7,
\allowbreak \, 1.62, \allowbreak \, 1.56, \allowbreak \, 1.50)$
respectively.

The power curves are shown in Figure \ref{fig:pwNormal}. In
the two--sample case, ANOVA reduces to the two--sample t--test
and the KW reduces to the Wilcoxon rank--sum test (Wilcoxon).
Yet all tests are found to have comparable powers.
It is against the common sense that the two--sample t--test is
most powerful and the Wilcoxon test is inferior.
In fact, \citet[3.4]{Lehmann1999} found that for normal populations,
the relative efficiency the Wilcoxon test to the
t--test is $3/\pi \approx 0.955$. 
In the unequal variance scenario, the DELR test clearly has
much higher power than its competitors, and its type I error rate
is close to the nominal 0.05.

\begin{figure}[!ht]
\begin{center}
    \includegraphics[scale=.35]{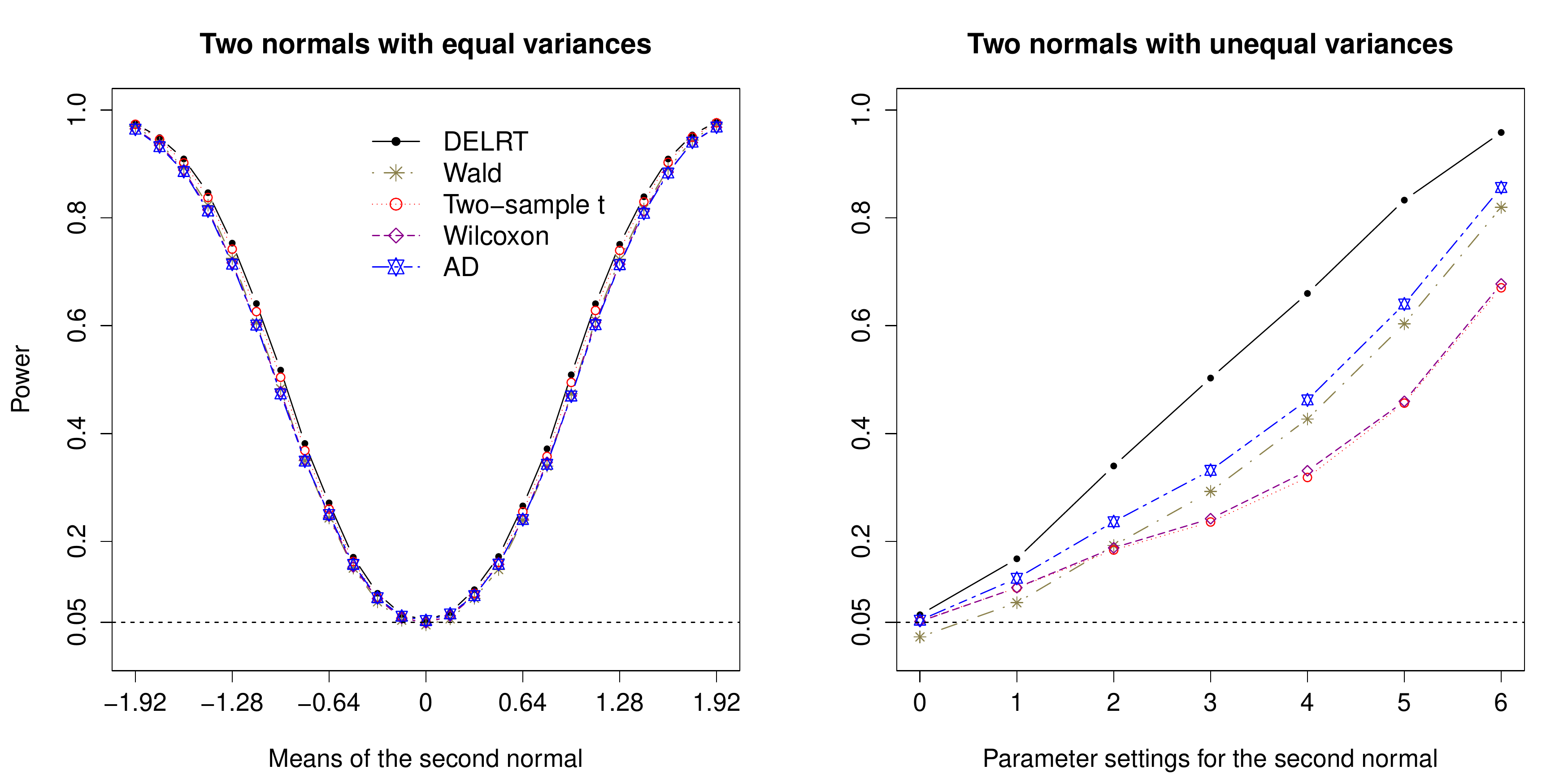}
    \caption{Power curve of $R_n$ under normal data; 
    the parameter setting $0$ corresponds to the null model and the
     settings 1--6 correspond to alternative models.}
    \label{fig:pwNormal}
\end{center}
\end{figure}

We next compare these tests on non--normal samples with
$m + 1 = 5$ and sample sizes to be $30$, $40$, $25$, $45$ and $50$.
We generated data from four families of distributions: 
gamma, log--normal, Pareto with common
support, and Weibull distributions with shape parameter
equaling $0.8$, respectively.
The log--normal, Pareto and Weibull distributions satisfy DRMs with basis
functions
$\q(x) = {(\log x, \, \log^2 x)}^\T$,
$\q(x) = \log x$, 
and $\q(x) = x^{0.8}$, respectively.

For each distribution family, we obtain simulated power
under six DRM parameter settings
(settings 0--5; shown in Table \ref{tab:pwcomp} in the
Appendix II).
Setting 0 satisfies the null hypothesis and settings 1--5 do
not.
The simulated rejection rates are shown in Figure \ref{fig:pwNonNormal}.
It is clear that the DELR test has the highest
power while its type I error rates are close to the nominal.


We note that the gamma and log--normal families do not satisfy the conditions needed to justify use 
of the CoxPH approach.
Consequently, the DELR test based on the DRM has a much
higher power than the likelihood ratio test based on the
partial likelihood under the CoxPH model.
In contrast, the Pareto or Weibull families with 
known, common shapes
do satisfy the CoxPH requirements; in these cases, the two tests have almost
the same power. 
These results show that in general the DRM is a better
choice for multiple samples.


\begin{figure}[!ht]
\begin{center}
    \includegraphics[scale=.35]{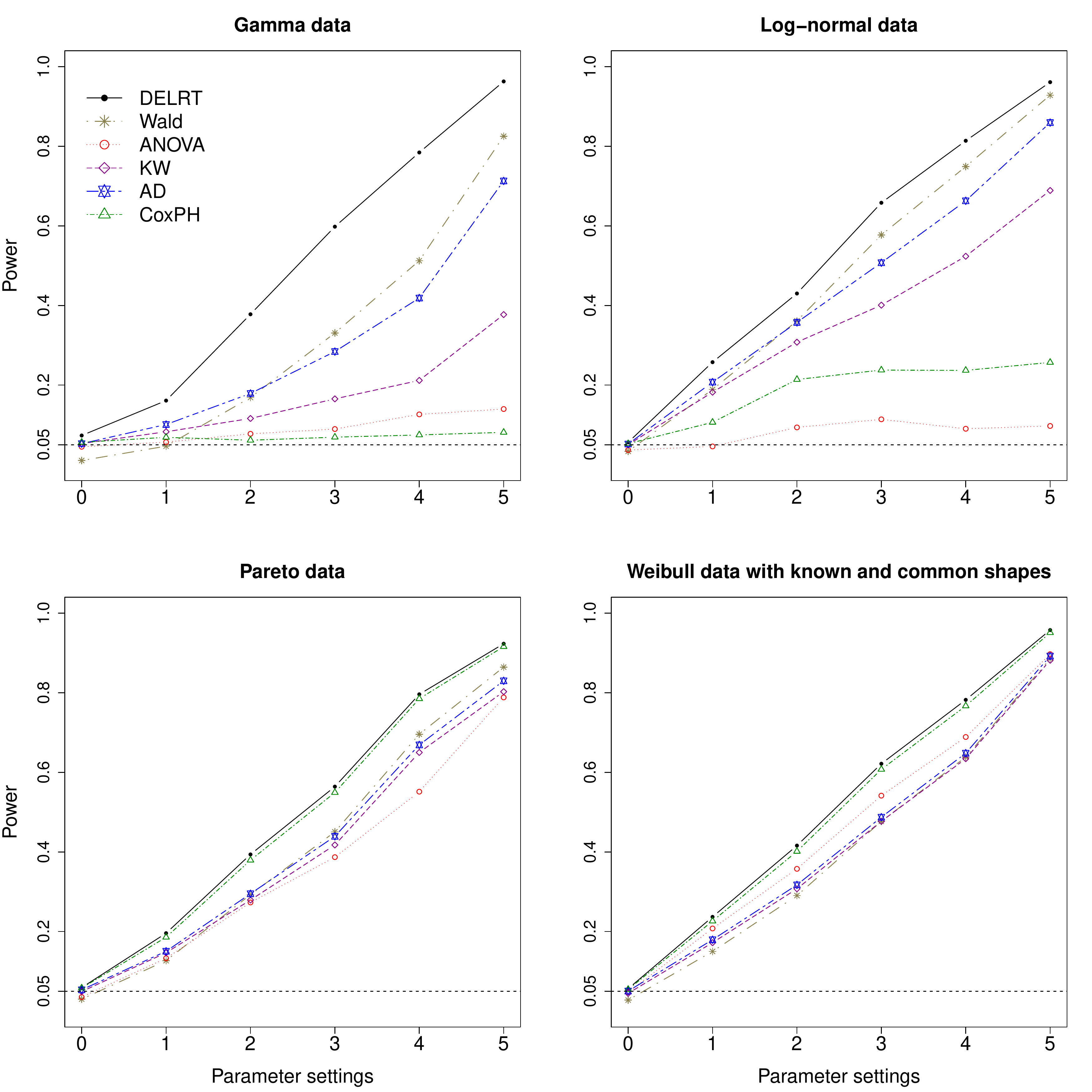}
    \caption{Power curves that obtain
    when the population distribution 
    from which the data are sampled 
    is non-normal;
      the parameter setting $0$ corresponds to the null model and the
     settings 1--5 correspond to alternative models.}
    \label{fig:pwNonNormal}
\end{center}
\end{figure}

\subsection{DELR test under misspecified DRM}
\label{subsec:pwCompMis}

The DRM is very flexible and includes a large number of
distribution families as special cases. The risk of misspecification
is low, and even lower when a high dimensional
basis function $\q(x)$ is utilized. Nevertheless,
examining the effect of misspecification remains an important
topic. \citet{Fokianos2006} suggested that misspecifying the 
basis function $\q(x)$ has an adverse
effect on estimating $\bbeta$.
\citet{Chen2013} found that estimation of population quantiles
is robust against misspecification. In this section,
we demonstrate that the effect of misspecification on DELR test is
small for testing equal population hypothesis.

We put $m+1=5$ with sample
sizes  $90$, $120$, $75$, $135$ and $150$.
In the first simulation experiment, we generated data from
two--parameter Weibull distributions, whose density
function is given by
\begin{align*}
  f(x; \, a, \, b) = (a/b) (x/b)^{a-1} \exp\{(-x/b)^a\},
  \  x \ge 0,
\end{align*}
where $a$ and $b$ are called the shape and scale parameters,
respectively.
The log density ratio of two Weibull distributions
is not linear in known functions of $x$, when they have
unknown $a$ values.
Hence two--parameter
Weibull family does not satisfy the DRM assumption
\eqref{eq:DRM}.
Nevertheless, we still fit a DRM with
$\q(x)={(x, \, \log x)}^\T$ to the Weibull data.
Clearly, this DRM is misspecified.
We use DELR test and Wald test under this DRM to test the
equal distribution function hypothesis.
We calculate the simulated power of these tests under six
parameter settings (Table \ref{tab:pwcompmis} in the
Appendix II) with the setting $0$ satisfying the null
hypothesis.

%

We also apply ANOVA, KW, AD, and the CoxPH. The results are
summarized as power curves in Figure
\ref{fig:pwNonNormalMis}. We notice that the DELR test has
close to nominal type I error rates. It has superior power
in detecting distributional differences. 
In particular, our
DELR approach has a much higher power than the 
CoxPH.
Note that the two--parameter Weibull distributions do not have
proportional hazards. 


\begin{figure}[!ht]
\begin{center}
    \includegraphics[scale=.35]{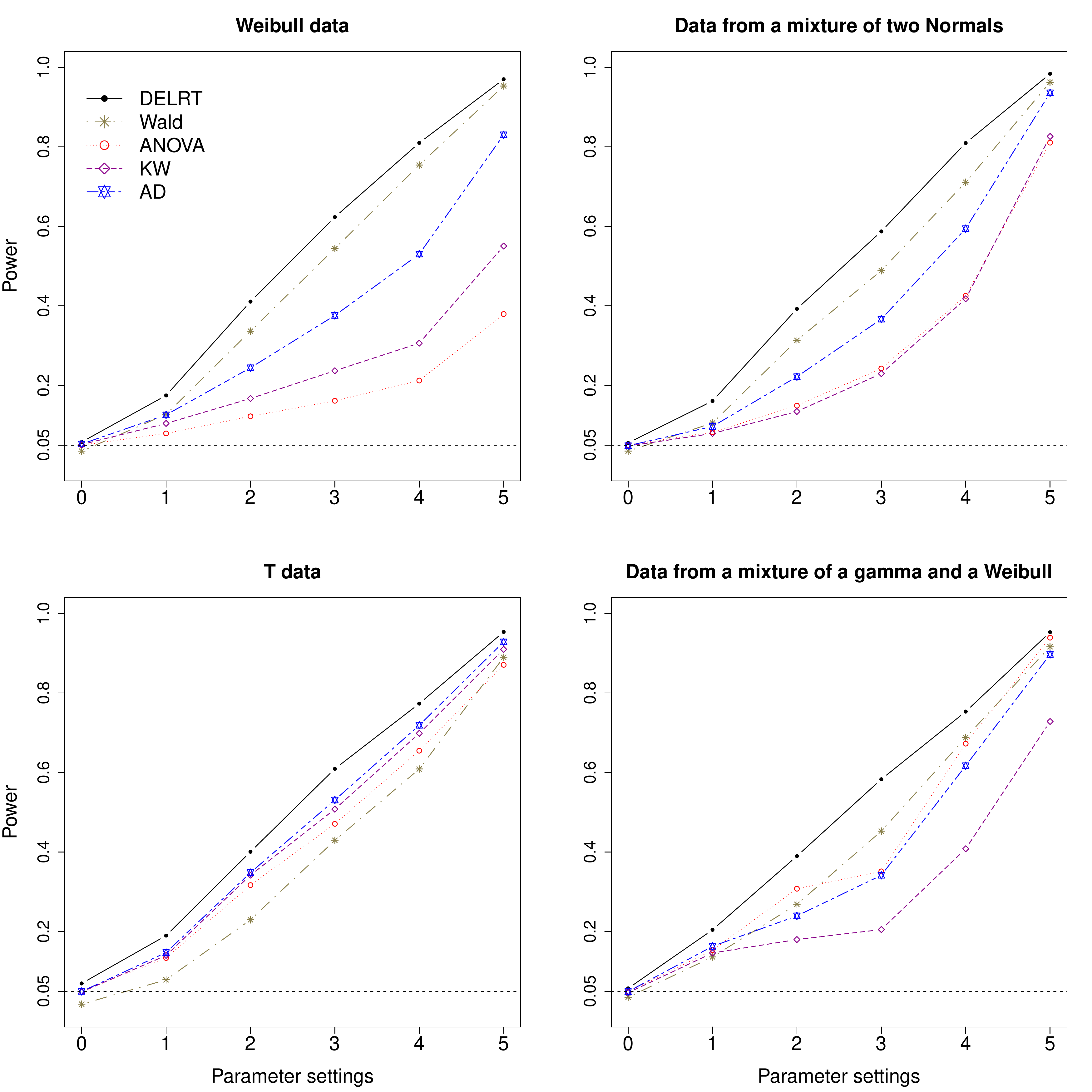}
    \caption{Power curves of five tests. 
      Parameter setting $0$ corresponds to the null model;
      settings 1--5 form alternative models.}
    \label{fig:pwNonNormalMis}
\end{center}
\end{figure}

We have also experimented with other models where the DRM
assumption is violated such as the mixture of two normals,
the non--central t, and the mixture of a gamma and a
Weibull. The results are similar to that for two--parameter
Weibull family.

\subsection{Comparison of $R_n^{(1)}$ and $R_n^{(2)}$}
\label{subsec:DRM1vs2}

Is it helpful to have data from other populations in DELR analysis?
In Theorem 3, we defined $R_n^{(1)}$ and $R_n^{(2)}$ 
and obtained a positive answer in Example 3.
In this section, we reaffirm this conclusion by means
of a simulation study.
Because the same question can be asked about the Wald tests,
we similarly define Wald$^{(1)}$ and
Wald$^{(2)}$ and include in our simulation study.
The number of simulation repetitions is set to $10,000$.


The first simulation uses the setting
in Example \ref{example:powComp}, where
data are from $m+1=3$ normal populations 
with the null hypothesis being $\bbeta_1 = (6, \, -1.5)^\T$.
The total sample size $n$ is $240$.
We calculated the powers of
$R_n^{(1)}$, $R_n^{(2)}$, Wald$^{(1)}$ and Wald$^{(2)}$
with the six different DRM parameters 
as shown in the Appendix II as the 
``Normal Case''  in Table
\ref{tab:pwcomp1vs2}.
The simulated power curves are in Figure \ref{fig:DELRT2vs1} (a).
The power comparisons
between $R_n^{(1)}$ and $R_n^{(2)}$ 
yield conclusions that would have 
been predicted by the conclusions 
of Theorem \ref{thm:pwComp}.
The Wald tests are not as powerful as the
DELR tests, but 
Wald$^{(2)}$ does seem 
to be more powerful than Wald$^{(1)}$.

Even if the additional samples are from distributions 
not under comparison, they may well 
be helpful in estimating 
the baseline distribution $F_0$. 
If so, we would be better 
able to identify the
differences among the distributions under comparison. To
explore these heuristic conclusions
we conducted the following simulation. 

Let $m + 1 = 4$ and consider a hypothesis test for $\bbeta_1$. 
The DELR test can be done using the
first two samples ($R_n^{(1)}$) and then using
all four samples
($R_n^{(2)}$).

We generated samples with sizes $60$, $30$ 
$40$ and $90$
from gamma 
distributions under two scenarios. In the first
scenario, the extra populations $F_2$ and $F_3$
are close to $F_0$.
Because of this, the samples from $F_2$ and $F_3$
are particularly helpful at accurately estimating $F_0$.
In the second scenario, $F_2$ and $F_3$ are rather
distinct from $F_0$.
Because of this, the samples from $F_2$ and $F_3$
are less helpful at estimating $F_0$,
The density functions of $F_0$, $F_2$ and $F_3$ along with
their parameter values under both scenarios are depicted  in
Figure \ref{fig:DELRT2vs1} (b).

Under both scenarios, we consider the same null hypotheses of
$\bbeta_1 = (-2, \, 2)^\T$. We simulated the powers of the
tests at six different values of $\bbeta_1$
(See the ``Gamma Case'' in 
Table \ref{tab:pwcomp1vs2} of the Appendix II) and we simulated power curves are
shown in Figure \ref{fig:DELRT2vs1} (c) and (d).
The degrees of improvement of $R_n^{(2)}$ under two scenarios
clearly match our intuition.
The same phenomenon is also evident for the Wald test.

\begin{figure}[!ht]
\begin{center}
    \includegraphics[scale=.35]{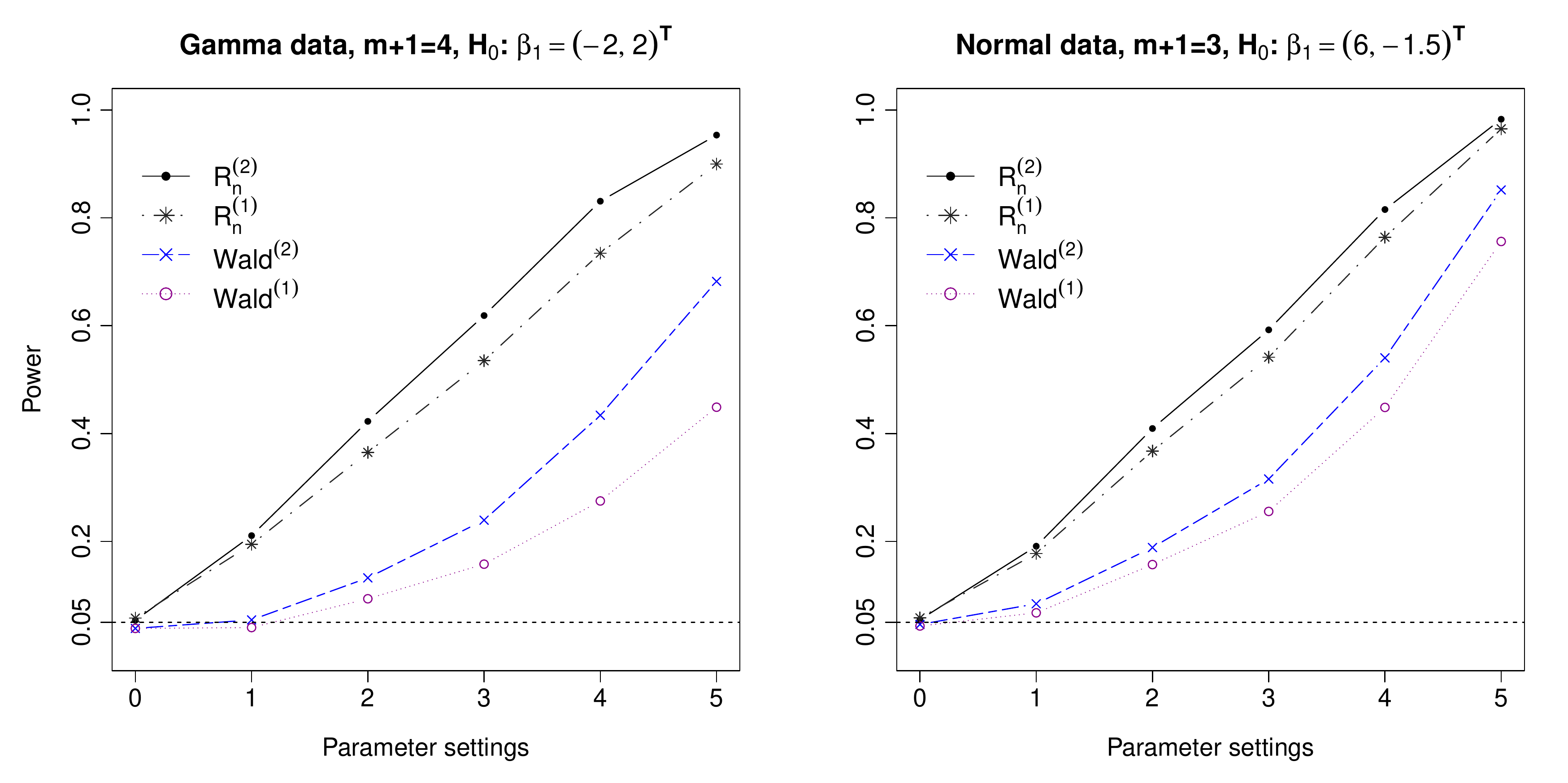}
    \caption{Power curves of $R_n^{(1)}$, $R_n^{(2)}$, Wald$^{(1)}$ and Wald$^{(2)}$;
      Parameter setting $0$ corresponds to the null model;
      settings 1--5 correspond to alternative models.}
    \label{fig:DELRT2vs1}
\end{center}
\end{figure}

\subsubsection{Effects of the length of the basis
function on DELR tests}

We cannot guarantee that the additional populations
are exactly the tilts of $F_0$ with a specific basis function. 
This problem can be alleviated by expanding the basis function
so that good approximations are ensured. 
Expanding the basis function, however, may have an adverse
effect on the power. We investigate this issue here.

Let $m + 1 = 4$ and consider a hypothesis test for
$\bbeta_1$ as in the last simulation. Again we compare the
tests based on the first two samples and the ones based on
all four samples.

We adopt the same distribution and parameter settings for
$F_0$ and $F_1$ as in the last simulation (parameter values
shown in Table \ref{tab:pwcomp1vs2} ``Gamma Case'' in the
Appendix II). However, we set $F_2$ to be log--normal with mean
$0$ and standard deviation $1$ on log scale and $F_3$ to be
Weibull with shape $2$ and scale $3$.
We consider the following four basis functions:
(1) $\q(x) = (\log x, \, x)^\T$,
(2) $\q(x) = (\log x, \, \sqrt{x}, \, x)^\T$,
(3) $\q(x) = (\log x, \, \sqrt{x}, \, x, \, x^2)^\T$,
and
(4) $\q(x) = (\log x, \, \sqrt{x}, \, x, \, x^{1.5}, \, x^2)^\T$.

The simulation results are shown in Figure \ref{fig:DELRT2vs1Basis}. 
Note that the parameter setting 0 corresponds to the null model.
We have the following observations:
\begin{enumerate}
\item
$R_n^{(2)}$ is more powerful than $R_n^{(1)}$ in all cases.

\item
With the simplest basis function $\q(x) = (\log x, \, x)^\T$,
$R_n^{(2)}$ has the type I error rate of $0.0625$, which
notably exceeds the nominal size of $5\%$;
the type I error rate improves significantly when the
dimension of the basis function increases (0.484, 0.474, and
0.0547 for three, four and five dimensional cases,
respectively).

\item
The powers of all four tests decrease as the dimension of
the basis function increases.
\end{enumerate}

These observations agree with our intuition. It seems that
in this particular case, choosing a three dimensional basis
function gives the best overall result: a reasonable accurate type I
error rate and also a good power.
The issue on how to choose basis function to achieve such a
balance in general is rather delicate, and we will study it
in the near future.

\begin{figure}[!ht]
\begin{center}
    \includegraphics[scale=.35]{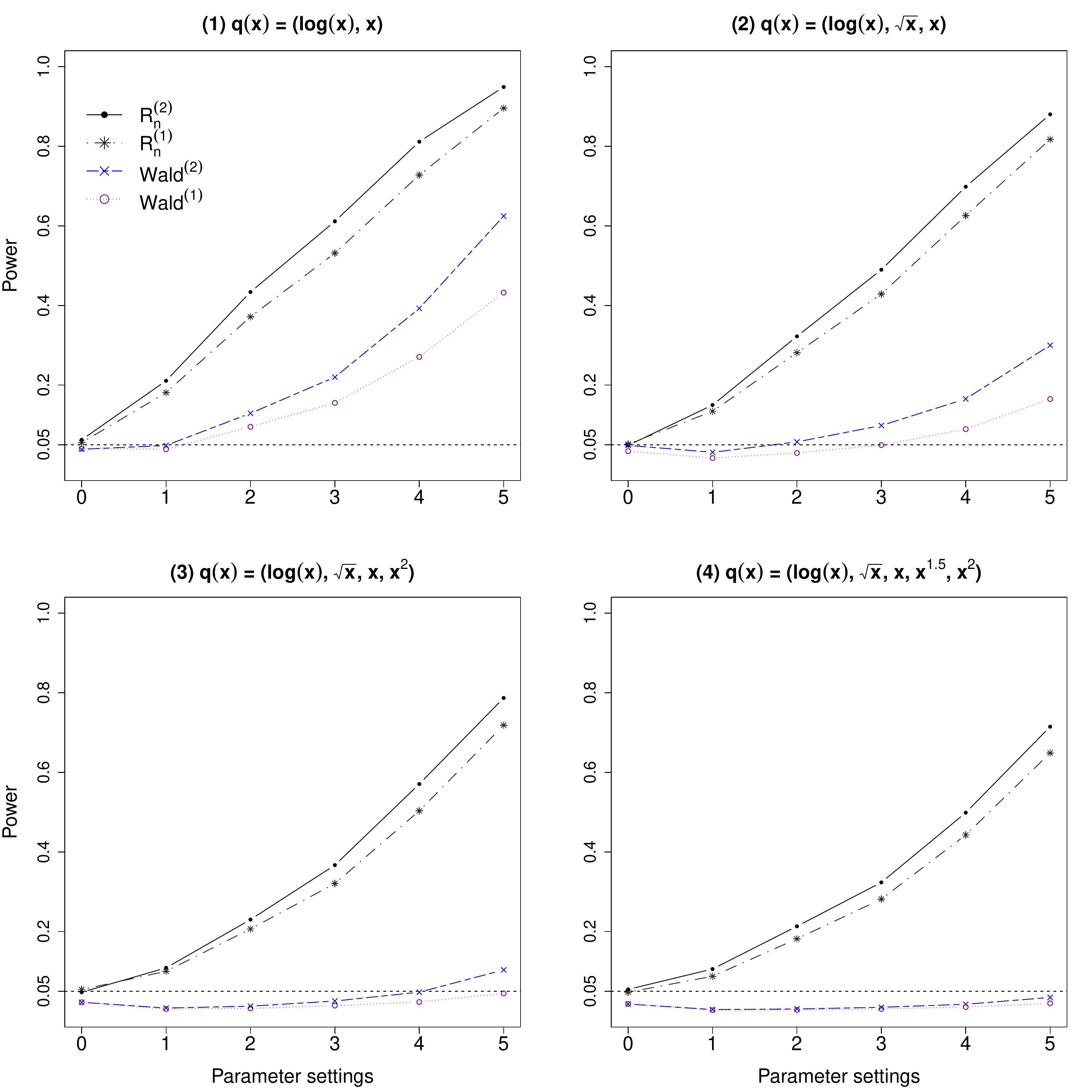}
    \caption{Power curves of $R_n^{(1)}$, $R_n^{(2)}$,
    Wald$^{(1)}$ and Wald$^{(2)}$ under DRMs with basis
    functions of different dimensions;
    Parameter setting $0$ corresponds to the null model;
    settings 1--5 correspond to alternative models.}
    \label{fig:DELRT2vs1Basis}
\end{center}
\end{figure}

\section{Analysis of lumber properties}
\label{sec:LumData}

The authors are members of the Forest Products Stochastic
Modeling Group centered at the University of British
Columbia and in that capacity are helping develop
methods for assessing the engineering strength properties of
lumber. A primary goal, one noted in Introduction, is an
effective but relatively inexpensive long term monitoring
program for the strength of lumber. One strength, which is
of primary importance, is the so--called modulus of rupture
(MOR) or ``bending strength'', which is measured in units of
$10^3$ pound--force per square inch (psi).
The Forest Products Stochastic Modeling Group collected
three MOR samples in year 2007, 2010 and 2011 with
sample sizes 98, 282 and 445, respectively. Our interest in
change over time, lead us to test the hypothesis that the
three samples come from the same lumber population.

We used basis function $\q(x) = {(\log x, \, x, \, x^2)}^\T$
for the DRM, chosen according to the characteristics of the kernel
density estimators of the MOR samples shown in Figure
\ref{fig:densPlotMOR} (a).
They seem to be well approximated by either a Gamma or a
normal distribution.
Hence, we chose a basis function that includes both $(\log x, \, x)$ 
and $(x, \, x^2)$.
To examine the adequacy of this basis function for fitting the MOR samples, we obtained 
EL kernel density estimates
based on $\{x_{kj}\}$ with weights $\{ \hat{p}_{kj} \}$
in addition to the usual kernel density estimates.
These density estimates along with
histograms of the MOR samples are shown in Figure
\ref{fig:densPlotMOR} (b) -- (d). 
We see that the EL kernel density estimates based 
on the DRM 
(the DRM fits) agree with the usual kernel density estimators 
(the Empirical fits) and the histograms well.

\begin{figure}[!ht]
\begin{center}
    \includegraphics[scale=.35]{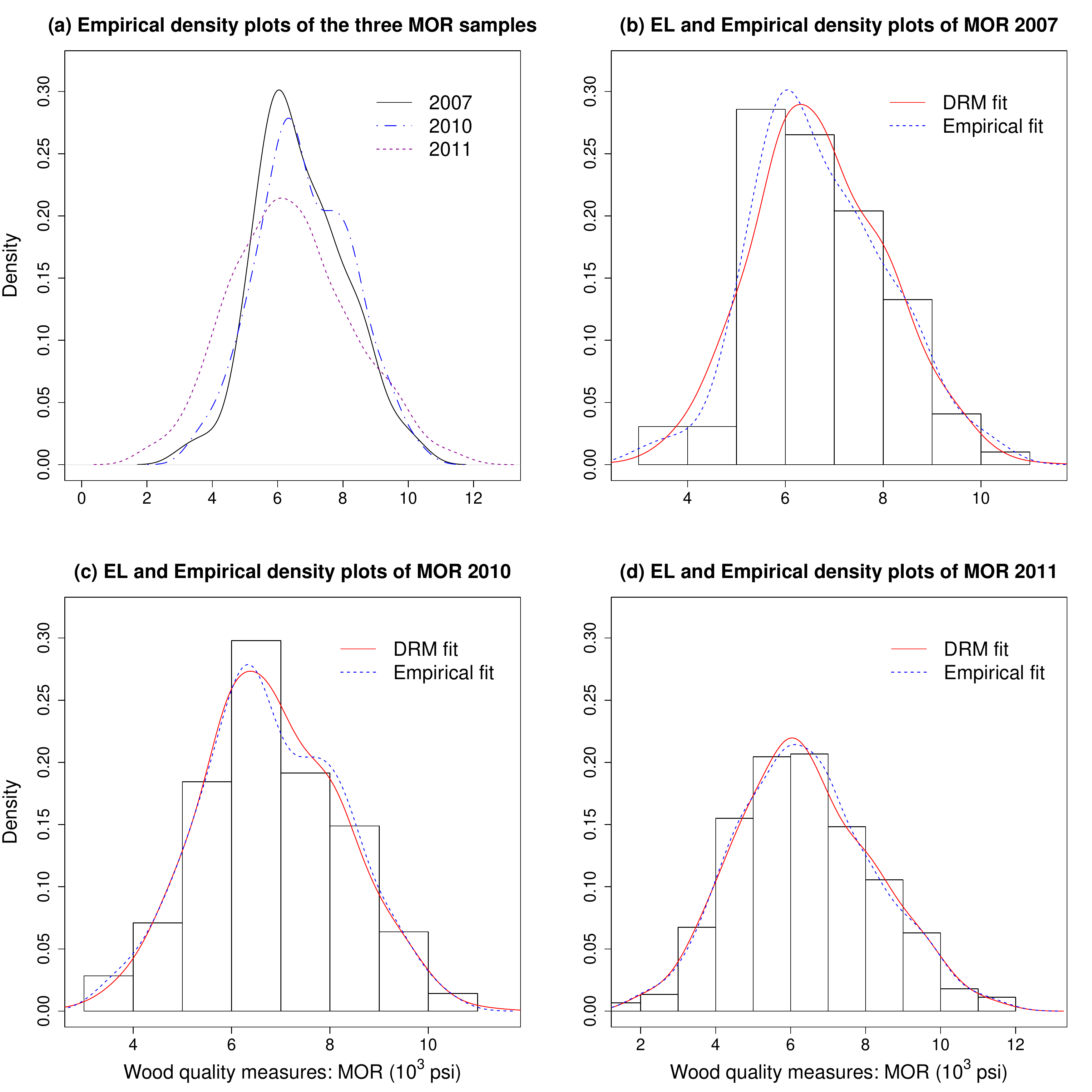}
    \caption{EL and empirical kernel density plots of the
    MOR samples}
    \label{fig:densPlotMOR}
\end{center}
\end{figure}

Let $F_{07}$, $F_{10}$ and $F_{11}$ denote the population
distributions for year 2007, 2010 and 2011, respectively. 
The p--values obtained using the DELR test, Wald test,
ANOVA and Kruskal--Wallis tests for
$H_0: F_{07} = F_{10} = F_{11}$
are respectively 3.05e-8, 2.04e-6, 2.90e-3 and 1.08e-3. 
The DRM--based tests, especially the
DELR test, have much smaller p--values.

Following the rejection of that hypothesis it is natural 
to look for its cause through pairwise comparisons. 
The p--values for pairwise comparisons are given
in Table \ref{tab:pValMORpairComp}. 
Note that the two DRM--based tests strongly suggest
$F_{11}$ is markedly different from $F_{07}$ and $F_{10}$,
while $F_{07}$ and $F_{10}$ are not significantly different.
The other two tests arrive at the same conclusion, but
without statistical significance at 5\% level. 
We also remark that the conclusion does not change 
at the 5\% level when a Bonferroni correction is applied to account 
for the multiple comparison.

In addition, if the 5\% size is strictly observed, t--test and KW test
would imply $F_{07} = F_{10}$ and $F_{07} = F_{11}$, but $F_{10} \neq F_{11}$.
This is much harder to interpret in applications.

\begin{table}
\caption{The p--values of pairwise comparisons among three MOR populations.}
\label{tab:pValMORpairComp}
\centering
\begin{tabular}{|l|cccc|}
  \hline
 & DELRT & Wald & t--test & KW \\ 
  \hline
  $\text{H}_0$: $F_{07}$=$F_{10}$ & 0.871 & 0.875 & 0.516 & 0.431 \\ 
  $\text{H}_0$: $F_{07}$=$F_{11}$ & 5.40e-4 & 7.01e-3 & 0.0579 & 0.0604 \\ 
  $\text{H}_0$: $F_{10}$=$F_{11}$ & 4.54e-8 & 1.82e-6 & 6.09e-4 & 3.95e-4 \\ 
  \hline
\end{tabular}
\end{table}

\section{Concluding remarks}

This paper has presented a new theory for testing a general class of
hypotheses under the DRM. The work
was motivated by an important application, the development of a new
long term monitoring program for the North American lumber industry.
The need for efficiency and hence small sample sizes led to our 
DRM approach where common information across samples
are pooled to gain efficiency.

The new theory is very general and flexible, making it quite
robust against misspecification of population distributions.
Our theoretical analysis and simulation confirm that the
new test has superior power than many competitors including
the likelihood ratio test based on partial likelihood under
the Cox proportional hazards model,
and does borrow strength as intended, to reduce the sample
size needed to achieve required power. The demonstration of
the use of the method on three lumber samples, shows our
method to give a more incisive assessment than competitors
through paired comparisons of the populations.

Our R package
{\emph{drmdel}}
for EL inference under DRMs, which is available on
{CRAN},
can carry out all computation tasks in this paper and those
in \citet{Chen2013}.

\section*{Appendix I: Proofs}
\label{append:proofs}
We first introduce more notations applicable to $k=0, \ldots, m$. 
Recall that
$\varphi_k(\btheta, \, x) = \exp\{ \alpha_k + \bbeta_k^\T \q(x)\}$.
We write
\[
  \cl_{n,k}(\btheta, \, x) =
  -\log \Big\{
    \sum_{r=0}^m \hat{\lambda}_r \varphi_r(\btheta, \, x)
  \Big\}
  +
  \big\{ \alpha_k + \bbeta_k^\T \q(x) \big\}
\]
with $\hat{\lambda}_r = n_r/n$ being the sample proportion.
Hence, the DEL
$
  l_n(\btheta) = \sum_{k,\, j} \cl_{n,k}(\btheta, \, x_{kj})
$
where the summation is over all possible $(k, j)$.
Let $\cl_{k}(\btheta, \, x)$ be the ``population'' version of
$\cl_{n,k}(\btheta, \, x)$ by replacing $\hat{\lambda}_r$
with its limit
$\rho_r$ in the above definition.
Let $\e_k$ be a vector of length $m$ with the $k^{th}$ entry being $1$
and the others being $0$s, and let $\delta_{ij} = 1$ when $i=j$, and $0$
otherwise. Recall the definitions (\ref{eq:def}) of
$\h(\btheta, \, x)$, $s(\btheta, \, x)$ and $H(\btheta, \, x)$.
The first order derivatives of $\cl_{k}(\btheta, \, x)$ can be written as
\begin{equation} 
\label{eq:clDeriva1}
\begin{split}
&
{\partial \cl_{k}(\btheta, \, x)}/{\partial \balpha}
  =
  (1-\delta_{k0}) \e_k - \h(\btheta, \, x)/s(\btheta, \, x), \\
&  
{\partial \cl_{k}(\btheta, \, x)}/{\partial \bbeta}
  =
\{\partial \cl_{k}(\btheta, \, x)/\partial \balpha\} \otimes \q(x).
\end{split}
\end{equation}
Similarly, we have
\begin{equation} \label{eq:clDeriva2}
  \begin{split}
  &{\partial^2 \cl_{k}(\btheta, \, x)}/{\partial \balpha \partial \balpha^\T}
  =
  -H(\btheta, \, x)/s(\btheta, \, x),
  \\
  &{\partial^2 \cl_{k}(\btheta, \, x)}/{\partial \bbeta \partial \bbeta^\T}
  =
  -\big\{
    H(\btheta, \, x)/s(\btheta, \, x)
  \big\}
  \otimes \big\{\q(x) \q^\T(x)\big\},
  \\
  &{\partial^2 \cl_{k}(\btheta, \, x)}/{\partial \balpha \partial \bbeta^\T}
  =
  -\big\{
    H(\btheta, \, x)/s(\btheta, \, x)
  \big\}
  \otimes \q^\T(x).
\end{split}
\end{equation}

The algebraic expressions of the derivatives of $\cl_{n,k}(\btheta, \,
x)$ are similar to those of $\cl_{k}(\btheta, \, x)$, only with
$\rho_r$ replaced by the sample proportion $\hat{\lambda}_r$. Note
that all entries of $\h(\btheta, \, x)$ are non--negative, and
$s(\btheta, \, x)$ exceeds the sum of all entries of
$\h(\btheta, \, x)$. Thus,
$\| \h(\btheta, \, x) / s(\btheta, \, x) \| \leq 1$
in terms of Euclidean norm, and the absolute value of each entry of
$H(\btheta, \, x)/ s(\btheta, \, x)$ is bounded by $1$.
By examining the algebraic expressions closely,
this result implies
\begin{align} \label{eq:clDerivaIneq}
  \left \vert 
  \frac{\partial^2 \cl_{n,k}(\btheta, \, x)}
    {\partial \theta_i \partial \theta_j} 
  \right \vert
  \le
 1+ \q^\T(x) \q(x)
  \ \ \  \text{and} \ \ \  
  \left \vert \frac{\partial^3 \cl_{n,k}(\btheta, \, x)}
  {\partial \theta_i \partial \theta_j \partial \theta_k} \right \vert
  \le
  \{1+ \q^\T(x) \q(x)\}^{3/2},
\end{align}
where $\theta_i$ denotes the $i^{th}$ entry of $\btheta$.

We also observed the following important relationships between the first and
second order derivatives of $\cl_{k}(\btheta, \, x)$:
\begin{align} \label{eq:keyObservation1}
  \E_0
  \left\{
    \frac{\partial \cl_{0}(\btheta^*, \, x)}{\partial \balpha}
  \right\}
  &=
  -\rho_0^{-1} U_{\balpha \balpha} \boldsymbol{1}_m,
  \quad
  \E_0
  \left\{
    \frac{\partial \cl_{0}(\btheta^*, \, x)}{\partial \bbeta}
  \right\}
  =
  -\rho_0^{-1} U_{\bbeta \balpha} \boldsymbol{1}_m,
\end{align}
and, for $k=1, \, \ldots, \, m$,
\begin{equation} \label{eq:keyObservation2}
  \begin{split}
  \E_k
  \left\{
    \frac{\partial \cl_{k}(\btheta^*, \, x)}{\partial \balpha}
  \right\}
  &=
  \rho_k^{-1} U_{\balpha \balpha} \e_k,
  \quad
  E_k \left\{
    \frac{\partial \cl_{k}(\btheta^*, \, x)}{\partial \balpha}
    \q^\T(x)
  \right\}
  =
  \rho_k^{-1} U_{\balpha \bbeta}  (\e_k \otimes I_d),
  \\
  \E_k
  \left\{
    \frac{\partial \cl_{k}(\btheta^*, \, x)}{\partial \bbeta}
  \right\}
  &=
  \rho_k^{-1} U_{\bbeta \balpha} \e_k,
  \quad
  E_k \left\{
    \frac{\partial \cl_{k}(\btheta^*, \, x)}{\partial \bbeta}
    \q^\T(x)
  \right\}
  =
  \rho_k^{-1} U_{\bbeta \bbeta} (\e_k \otimes I_d).
\end{split}
\end{equation}

The assumption that
$
  \int \exp\{ \bbeta_k^\T \q(x) \} d F_0 < \infty
$
for $\btheta$ in a neighbourhood of $\btheta^*$ implies that the moment
generating function of $\q(x)$ with respect to each $F_k$,
exists in a neighbourhood of $\boldsymbol{0}$. Hence, all finite order moments
of $\q(x)$ with respect to each $F_k$ are finite. This fact and inequalities
(\ref{eq:clDerivaIneq}) reveal that the second and third
order derivatives of
$l_n(\btheta)$ are bounded by an integrable function.

Under the assumption of Theorem \ref{thm:DELRT} that 
$\int \bQ(x)\bQ^\T(x) dF_0$ is positive definite, the information
matrix $U$ given by (\ref{eq:Uexpression}) is positive definite.
As a reminder, $\bQ(x) = {(1, \q^\T(x))}^\T$.

\subsection*{A.1. Proof of Theorem \ref{thm:DELRT}}

Under the null hypothesis (\ref{hp:composite}),
we show that the DELR statistic is approximated by a
quadratic form that has a chi--square limiting distribution.
We first give two key lemmas.

Let $T  =
  {\rho_0}^{-1}  \boldsymbol{1}_m \boldsymbol{1}_m^\T
  +
  \diag \{ \rho_1^{-1}, \ldots, \rho_m^{-1} \}$
and $W = \diag\{ T, \, 0_{md \times md}\}$.
Put $\bv = n^{-1/2} {\partial l_n(\btheta^*)} / {\partial \btheta}$.
Let $\E(\cdot)$ be the usual expectation operator and $\E_k(\cdot)$ be
the expectation operator respect $F_k$.

\begin{lemma}[Asymptotic properties of the score function]
\label{lemma:propScore}
Under the conditions of Theorem \ref{thm:DELRT},
$\E \bv = \boldsymbol{0}$ and $\bv$ is asymptotically multivariate
normal with mean $\boldsymbol{0}$ and covariance matrix
$
  V  =  U - UWU
$.
\end{lemma}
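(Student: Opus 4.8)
The plan is to exploit that each summand $\cl_{n,k}(\btheta,x_{kj})$ involves only a single observation, so the score $\partial l_n(\btheta^*)/\partial\btheta=\sum_{k,j}\psi_{kj}$, with $\psi_{kj}=\partial\cl_{n,k}(\btheta^*,x_{kj})/\partial\btheta$, is a sum of \emph{independent} vectors: within sample $k$ the $n_k$ summands are i.i.d.\ copies of a vector $\psi$ built from $x\sim F_k$, and distinct samples are independent. Since $\bv=n^{-1/2}\sum_{k,j}\psi_{kj}$, the proof reduces to three tasks—computing the mean, computing the limiting covariance, and invoking a central limit theorem for the resulting triangular array.

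For the mean I use the first derivatives (\ref{eq:clDeriva1}) together with the DRM relation $dF_k=\varphi_k(\btheta^*,x)\,dF_0$ and the normalization $\E_0[\varphi_t(\btheta^*,x)]=1$ from (\ref{eq:drmConstraint}). Differentiating in $\alpha_t$ gives $\partial l_n/\partial\alpha_t=n_t-\sum_{k,j}\hat\lambda_t\varphi_t(\btheta^*,x_{kj})/S_n(x_{kj})$, where $S_n=\sum_r\hat\lambda_r\varphi_r$. Taking expectations and using $dF_k=\varphi_k\,dF_0$, the subtracted sum becomes $\hat\lambda_t\sum_k n_k\E_0[\varphi_k\varphi_t/S_n]=\hat\lambda_t\,\E_0[\varphi_t(\sum_k n_k\varphi_k)/S_n]$; since $\sum_k n_k\varphi_k=nS_n$ this equals $\hat\lambda_t\,n\,\E_0[\varphi_t]=n_t$, so $\partial l_n/\partial\alpha_t$ has mean $n_t-n_t=0$. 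The $\bbeta_t$-component is identical but for an extra factor $\q(x)$, and hence $\E\bv=\boldsymbol{0}$ exactly.

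For the covariance, independence and $\E\bv=\boldsymbol{0}$ give $\Var(\bv)=n^{-1}\sum_k n_k\Cov_k(\psi)=A-B$, with $A=n^{-1}\sum_k n_k\E_k[\psi\psi^\T]$ and $B=n^{-1}\sum_k n_k\bar\psi_k\bar\psi_k^\T$, $\bar\psi_k=\E_k[\psi]$. Writing $g_k=(1-\delta_{k0})\e_k-\h/s$ for the $\balpha$-part of $\psi$ (so its $\bbeta$-part is $g_k\otimes\q$), the crux is a telescoping identity: using $\sum_k\rho_k\varphi_k=s$ and $\sum_{k\ge1}\rho_k\varphi_k\e_k=\h$, the four terms of $\sum_{k=0}^m\rho_k\varphi_k\,g_kg_k^\T$ combine to $\diag\{\h\}-\h\h^\T/s=H$. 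Taking $\E_0$ and tensoring with $\q$ then reproduces the three blocks of (\ref{eq:Uexpression}), so $A\to U$. For $B$ I substitute (\ref{eq:keyObservation1})--(\ref{eq:keyObservation2}), which read $\bar\psi_k=\rho_k^{-1}U_{\cdot\balpha}\e_k$ for $k\ge1$ and $\bar\psi_0=-\rho_0^{-1}U_{\cdot\balpha}\boldsymbol{1}_m$, with $U_{\cdot\balpha}$ the first block column of $U$; summing the rank-one matrices gives $B\to U_{\cdot\balpha}(\rho_0^{-1}\boldsymbol{1}_m\boldsymbol{1}_m^\T+\diag\{\rho_1^{-1},\ldots,\rho_m^{-1}\})U_{\cdot\balpha}^\T=U_{\cdot\balpha}\,T\,U_{\cdot\balpha}^\T$. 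Because $W=\diag\{T,0\}$ carries $T$ only in its $\balpha\balpha$ block, this is exactly the block product $UWU$, so $\Var(\bv)\to U-UWU=V$.

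Asymptotic normality then follows from a multivariate Lindeberg--Feller central limit theorem applied to the array $\{n^{-1/2}(\psi_{kj}-\bar\psi_k)\}$; it is triangular because the weights $\hat\lambda_r$ depend on $n$, but this is harmless as $\hat\lambda_r=\rho_r+o(1)$, which also forces $A-B\to V$. The Lindeberg (or a Lyapunov) condition is verified from the derivative bounds (\ref{eq:clDerivaIneq}) and the finiteness of all moments of $\q(x)$ supplied by the assumed existence of its moment generating function. I expect the main obstacle to lie in the covariance bookkeeping—establishing the telescoping $\sum_k\rho_k\varphi_k g_kg_k^\T=H$ and then matching the rank-one sum in $B$ to the block form $UWU$—while the mean and the limit theorem are comparatively routine once the independence of the summands is recognized.
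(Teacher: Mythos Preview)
Your proposal is correct and takes essentially the same approach as the paper: decompose $\bv$ by sample into independent centered sums, verify $\E\bv=\boldsymbol{0}$, split the limiting covariance into a second-moment piece converging to $U$ and a mean-product piece converging to $UWU$ via (\ref{eq:keyObservation1})--(\ref{eq:keyObservation2}), and then invoke a CLT. You spell out the telescoping identity $\sum_k\rho_k\varphi_k\,g_kg_k^\T=H$ and the block identification $U_{\cdot\balpha}TU_{\cdot\balpha}^\T=UWU$ where the paper merely asserts them, and you use Lindeberg--Feller on the full triangular array rather than applying the i.i.d.\ CLT to each of the $m+1$ sample sums separately, but these are cosmetic variations on the same argument.
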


\begin{proof}[\sc{Proof}]
Denote $\bmu_k = \E_k \{ \partial \cl_{n,k}(\btheta^*, \, x)/\partial \btheta\}$.
We can verify that
\mbox{$\E \bv = n^{1/2} \sum_{k=0}^m \hat{\lambda}_k \bmu_k =
\boldsymbol{0}$}.
Hence, we have
\begin{align*} 
  \bv
  &=
  \sum_{k=0}^m  \hat{\lambda}_k^{1/2}
  \big\{
    n_k^{-1/2} \sum_{j=1}^{n_k}
    \big( 
      {\partial \cl_{n,k}(\btheta^*, \, x_{kj})}/{\partial \btheta}
      - \bmu_k
    \big)
  \big\}.
\end{align*}
Clearly, each term in curly brackets
is a centered sum of iid random variables with finite covariance matrices.
Thus, they are all asymptotically normal with appropriate covariance matrices. 
In addition, these terms are independent of each other, $\hat{\lambda}_k = n_k/n$
are non--random with a limit $\rho_k$. Therefore, the linear combination
is also asymptotically normal.

What left is to verify the form of the asymptotic covariance
matrix. The asymptotic covariance matrix of each term 
in curly brackets is given by
\begin{align*}
  V_k
  =
  \E_k
  \left\{
    ({\partial \cl_{k}(\btheta^*, \, x)}/{\partial \btheta})
    ({\partial \cl_{k}(\btheta^*, \, x)}/{\partial \btheta^\T})
  \right\}
  -
   \bmu_k \bmu_k^\T,
\end{align*}
and hence the overall asymptotic variance matrix is
$V  =  \sum_{k=0}^m \rho_k V_k$.
In addition, it is easy to verify that 
\[
  \sum_{k=0}^m \rho_k
  \E_k
  \left\{
    ({\partial \cl_{k}(\btheta^*, \, x)}/{\partial \btheta})
    ({\partial \cl_{k}(\btheta^*, \, x)}/{\partial \btheta^\T})
  \right\}
= U
\]
and we also find
$ \sum_{k=0}^m \rho_k \bmu_k \bmu_k^\T  = UWU$ by
(\ref{eq:keyObservation1}) and (\ref{eq:keyObservation2}).
Thus, $V = U - UWU$ and this completes the proof.
\end{proof}

\begin{lemma}[Quadratic form decomposition formula]
  \label{lemma:matDecomp}
  Let $\bz^\T = (\bz_1^\T, \, \bz_2^\T)$ be a vector of
  length $m+n$, partitioned in agreement with $m$ and $n$,
  and $\Sigma$ be a $(m+n) \times (m+n)$ a nonsingular matrix with partition
  \begin{align*}
    \Sigma
    =
    \begin{pmatrix}
      \underset{m \times m}{A} & \underset{m \times n}B \\
      \underset{n \times m}{B^\T} & \underset{n \times n}{C}
    \end{pmatrix}.
  \end{align*}
  When $A$ is nonsingular, so is $C - B^\T A^{-1}B$ and 
  \begin{align*}
    \bz^\T \Sigma^{-1} \bz =
    \big(\bz_2 - B^\T A^{-1} \bz_1 \big)^\T
    \big(C - B^\T A^{-1}B\big)^{-1}
    \big(\bz_2 - B^\T A^{-1} \bz_1 \big) + \bz_1^\T A^{-1} \bz_1.
  \end{align*}
\end{lemma}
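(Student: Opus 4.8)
The plan is to reduce the identity to the block $LDL^\T$ (Schur complement) factorization of $\Sigma$ and then substitute into the quadratic form. Writing $S = C - B^\T A^{-1} B$ for the Schur complement of $A$, I would first record the factorization
\[
  \Sigma
  =
  \begin{pmatrix} I & 0 \\ B^\T A^{-1} & I \end{pmatrix}
  \begin{pmatrix} A & 0 \\ 0 & S \end{pmatrix}
  \begin{pmatrix} I & A^{-1} B \\ 0 & I \end{pmatrix},
\]
which is checked by a direct block multiplication and uses only the nonsingularity of $A$. Taking determinants gives $\det \Sigma = \det A \cdot \det S$; since both $\Sigma$ and $A$ are nonsingular by hypothesis, $\det S \neq 0$, so $S$ is nonsingular. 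This already settles the first assertion of the lemma.

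Abbreviating the unit lower-triangular factor above as $L$, the factorization reads $\Sigma = L \, \diag\{A, S\} \, L^\T$, so it inverts to $\Sigma^{-1} = (L^{-1})^\T \diag\{A^{-1}, S^{-1}\} L^{-1}$, where the inverse is read off immediately as
\[
  L^{-1}
  =
  \begin{pmatrix} I & 0 \\ -B^\T A^{-1} & I \end{pmatrix}.
\]
The final step is to apply $L^{-1}$ to $\bz$, which yields $L^{-1}\bz = \big(\bz_1^\T, \, (\bz_2 - B^\T A^{-1}\bz_1)^\T\big)^\T$, and then to evaluate the resulting diagonal quadratic form $\bz^\T \Sigma^{-1} \bz = (L^{-1}\bz)^\T \diag\{A^{-1}, S^{-1}\} (L^{-1}\bz)$. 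Because the middle matrix is block diagonal, this separates into the two advertised terms $\bz_1^\T A^{-1} \bz_1$ and $(\bz_2 - B^\T A^{-1}\bz_1)^\T S^{-1} (\bz_2 - B^\T A^{-1}\bz_1)$, which completes the proof.

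Since the argument is purely algebraic, I do not expect any genuine analytic obstacle; the only point requiring a moment's care is the nonsingularity of $S$, which I handle through the determinant identity rather than by exhibiting an explicit inverse. Should a factorization-free derivation be preferred, an equivalent route is to complete the square directly: solving $\Sigma \bw = \bz$ blockwise one eliminates $\bw_1 = A^{-1}(\bz_1 - B \bw_2)$, is led to $S \bw_2 = \bz_2 - B^\T A^{-1}\bz_1$, and recovers the same expression from $\bz^\T \Sigma^{-1} \bz = \bz^\T \bw$ using the symmetry of $\Sigma$. Both routes rest on exactly the two facts invoked above, so I would present the factorization version for brevity.
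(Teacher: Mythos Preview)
Your argument is correct: the block $LDL^\T$ factorization immediately yields both the nonsingularity of the Schur complement (via the determinant identity) and the quadratic form decomposition, and each step is justified. The paper itself does not give a proof of this lemma at all --- it simply remarks that one can verify the conclusion directly or consult Theorem~8.5.11 of Harville (2008) --- so your write-up is in fact more detailed than what appears in the paper, and your factorization approach is exactly the standard ``direct verification'' the paper alludes to.
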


One can verify the above conclusion directly or
refer to Theorem 8.5.11 of \citealt{Harville2008}.

\begin{proof}[\sc{Proof of Theorem \ref{thm:DELRT}}]

We first work on quadratic expansions of
$l_n(\hat \btheta)$ and $l_n(\tilde \btheta)$ under the null model. 
The difference of the two quadratic forms is then shown to
have a chi--square limiting distribution.

Recall $\bv = n^{-1/2} {\partial l_n(\btheta^*)}/{\partial \btheta}$.
By expanding $l_n(\btheta)$ at $\btheta^*$, we get
\begin{align*}
  l_n(\btheta)
  =
  l_n(\btheta^*)
  + \sqrt{n} \bv^\T (\btheta - \btheta^*)
  - (1/2) n (\btheta - \btheta^*)^\T U_n (\btheta - \btheta^*)
  + \epsilon_n
\end{align*}
where $\epsilon_n = O_p(n^{-1/2})$ when $\btheta - \btheta^* = O_p(n^{-1/2})$
because the third derivative is bounded by an integrable function
shown in (\ref{eq:clDerivaIneq}).
Ignoring $\epsilon_n$, the leading term in this expansion is maximized
when 
\begin{align*}
\btheta - \btheta^* 
= 
n^{-1/2} U_n^{-1} \bv + o_p(n^{-1/2}).
\end{align*}
At the same time, the DEL $l_n(\btheta)$ is by definition
maximized at $\btheta = \hat \btheta$, and $\hat \btheta$ is
known to be root--$n$ consistent (\citealt{Chen2013} and
\citealt{Zhang2002}), hence
\begin{align*}
\hat \btheta -\btheta^* 
= 
n^{-1/2} U_n^{-1} \bv + o_p(n^{-1/2})
=
n^{-1/2} U^{-1} \bv + o_p(n^{-1/2}),
\end{align*}
which leads to
\begin{align} 
\label{eq:lnExpansionOriginal}
  l_n(\hat \btheta)
  =
  l_n(\btheta^*) + (1/2) \bv^\T U^{-1} \bv + o_p(1).
\end{align}


Next, we work on an expansion for $l_n(\tilde \btheta)$ under the null
model. Recall that $\bbeta$ is part of $\btheta$.
We express the null hypothesis $\g(\bbeta) = {\bf 0}$ in
another equivalent form.
Let $\bbeta^*$ represent a null model.
Recall that
$\g: \mathbb{R}^{md} \to \mathbb{R}^{q}$
is thrice differentiable in a neighbourhood of
$\bbeta^*$ with full rank Jacobian matrix
\mbox{$
\bigtriangledown = \partial \g(\bbeta^*)/\partial \bbeta
$}.
When $q < md$, by the implicit function theorem
\citep[8.5.4, Theorem 1]{Zorich2004},
there exists a unique function $\altg$:
$\mathbb{R}^{md-q} \to \mathbb{R}^{md}$, such that 
$\g(\bbeta) = 0$ if and only if $\bbeta = \altg(\bgamma)$
for some $\bbeta$ and $\bgamma$ in a corresponding
neighbourhoods of $\bbeta^*$ and $\bgamma^*$
respectively.
In addition, $\altg$ is also thrice differentiable in a
neighbourhood of $\bgamma^*$, and its Jacobian is
\begin{align*}
J = \partial \altg(\bgamma^*)/\partial \bgamma
  = (
    -(\bigtriangledown_1^{-1} \bigtriangledown_2)^\T, \,  I_{md-q}
  )^\T.
\end{align*}
This Jacobian is the same as the matrix $J$ in Theorem
\ref{thm:localPower}.
When $q=md$, by the inverse function theorem \citep[8.6.1,
Theorem 1]{Zorich2004}, $\g$ is invertible at $\bbeta^*$,
i.e. $\bbeta^* = \g^{-1}(\bf 0)$. Hence, in this case, $\g$
defines a simple hypothesis testing problem with $\bbeta$
being fully specified to be $\g^{-1}(\bf 0)$ in the null.

We first look at the case of $q < md$.
With the above representaion of the null model, the
DRM parameter under the null hypothesis
is $\btheta = (\balpha, \, \altg(\bgamma))$.
Hence, we may write the likelihood function under null model as
\begin{align*}
\ell_n(\balpha, \, \bgamma) =  l_n(\balpha, \, \altg(\bgamma)).
\end{align*}
Let $(\tilde \balpha, \, \tilde \bgamma)$ be the maximal point of
$\ell_n(\balpha, \bgamma)$.
Clearly, $\ell_n(\balpha, \, \bgamma)$ has the same properties
as $l_n(\btheta)$ and $\ell_n(\tilde \balpha, \, \tilde \bgamma)$
has a similar expansion as
\eqref{eq:lnExpansionOriginal}.
Partition $\bv$ into
$\bv_1 = n^{-1/2} \partial l_n(\btheta^*)/\partial \balpha$ and
$\bv_2 = n^{-1/2} \partial l_n(\btheta^*)/\partial \bbeta$.
Note that
\begin{align*}
n^{-1/2} \partial \ell_n(\balpha^*, \bgamma^*)/\partial \balpha
= 
n^{-1/2} \partial l_n(\btheta^*)/\partial \balpha = \bv_1.
\end{align*}
By the chain rule, 
\begin{align} \label{eq:scoreNullPartition}
n^{-1/2} \partial \ell_n(\balpha^*, \bgamma^*)/\partial  \bgamma 
  =
n^{-1/2} J^\T \{\partial l_n(\btheta^*)/\partial \bbeta) \}
  =
J^\T \bv_2.
\end{align}
Similarly, the new information matrix is found to be
\begin{align*}
  \tilde{U}
  =
      \begin{pmatrix}
   I_m   &   0   \\
   0   &  J^\T
  \end{pmatrix} 
    \begin{pmatrix}
    U_{\balpha\balpha}   &    U_{\balpha\bbeta}    \\
   U_{\bbeta\balpha}   &   U_{\bbeta\bbeta}
  \end{pmatrix}
   \begin{pmatrix}
   I_m   &   0   \\
   0   &  J
  \end{pmatrix} 
  =
  \begin{pmatrix}
    {U_{\balpha\balpha}}    &    {U_{\balpha\bbeta}J}    \\
    {J^\T U_{\bbeta\balpha}}    &    {J^\T U_{\bbeta\bbeta} J}
  \end{pmatrix}.
\end{align*}
Consequently, we have
\begin{align*} 
  l_n(\tilde \btheta)
  =
\ell_n(\tilde \balpha, \tilde \bgamma)
  =
\ell_n(\balpha^*, \bgamma^*)
+
 (1/2)
(\bv_1^\T, \bv_2^\T J)
  \tilde{U}^{-1}
(\bv_1^\T, \bv_2^\T J)^\T
  +
  o_p(1).
\end{align*}

Combining (\ref{eq:lnExpansionOriginal}) and the above expansion, 
and noticing that $\ell_n(\balpha^*, \bgamma^*) = l_n(\btheta^*)$,
we have
\begin{align*} 
  R_n
  =
  2\{ l_n(\hat \btheta) - l_n(\tilde \btheta) \}
  =
  \bv^\T U^{-1} \bv
  -
  (\bv_1^\T, \bv_2^\T J)
  \tilde U^{-1}
  (\bv_1^\T, \bv_2^\T J)^\T
  + o_p(1).
\end{align*}
Applying Lemma \ref{lemma:matDecomp} to the two quadratic forms on
the right hand side (RHS) of the above expansion, we get
\begin{equation} \label{eq:quadDecomp}
\begin{split}
  &\bv^\T U^{-1} \bv
  = \bxi^\T \Lambda^{-1} \bxi +
  \bv_1^\T U_{\balpha\balpha}^{-1} \bv_1,
  \\
  &(\bv_1^\T, \bv_2^\T J)
  \tilde U^{-1}
  (\bv_1^\T, \bv_2^\T J)^\T
  =
  \bxi^\T J {( J^\T \Lambda J )}^{-1} J^\T \bxi
  +
  \bv_1^\T U_{\balpha\balpha}^{-1} \bv_1,
\end{split}
\end{equation}
where
$
\bxi =
(- U_{\bbeta\balpha} U_{\balpha\balpha}^{-1}, \  I_{md}) \bv
$ and
$\Lambda =
U_{\bbeta \bbeta} -  U_{\bbeta \balpha} U_{\balpha \balpha}^{-1}
U_{\balpha \bbeta}$ is defined in Theorem \ref{thm:localPower}.
We then obtain the following expansion
\begin{align} \label{eq:RnExpansion}
  R_n
  =
  2\{ l_n(\hat \btheta) - l_n(\tilde \btheta) \}
  =
  \bxi^\T
  \{ \Lambda^{-1} - J {( J^\T \Lambda J )}^{-1} J^\T \}
  \bxi
  + o_p(1).
\end{align}
Recall that, by Lemma \ref{lemma:propScore},
$\bv$ is asymptotically $N(\boldsymbol{0}, \, U-UWU)$,
so $\bxi$ is asymptotic normal with mean $\bf 0$ and covariance matrix
$
  (- U_{\bbeta\balpha} U_{\balpha\balpha}^{-1}, \  I_{md})  (U - UWU)
  (- U_{\bbeta\balpha} U_{\balpha\balpha}^{-1}, \  I_{md})^\T = \Lambda$,
where the last equality is obtained using the expression of $W$ given in
Lemma \ref{lemma:propScore}.

The last step is to verify the quadratic form in the above expansion of $R_n$
has the claimed limiting distribution. We can easily check that
\begin{align*}
  \Lambda^{1/2}
  \{ \Lambda^{-1} - J {( J^\T \Lambda J )}^{-1} J^\T  \}
  \Lambda^{1/2}
\end{align*}
is idempotent.
Moreover, the trace of the above idempotent matrix is found to be $q$.
Therefore, by Theorem 5.1.1 of \citet{Mathai1992}, the quadratic form
in expansion (\ref{eq:RnExpansion}), and hence also $R_n$, has a
$\chi_q^2$ limiting distribution.

The above proof is applicable to  $q < md$. 
When $q = md$, the value of $\bbeta$ is fully specified. Hence,
the maximization under null is solely with respect to $\balpha$
and we easily find
\begin{align*}
  l_n(\tilde \btheta)
  =
  l_n(\btheta^*) + 
  (1/2)  \bv_1^\T U_{\balpha\balpha}^{-1} \bv_1
  +
  o_p(1).
\end{align*}
This, along with the expansion (\ref{eq:lnExpansionOriginal})
of $l_n(\hat \btheta)$ and expression (\ref{eq:quadDecomp}),
implies that
$
  R_n = 
  \bxi^\T
  \Lambda^{-1}
  \bxi
  + o_p(1)
$.
Just as the proof for the case of $q < md$, the limiting
distribution of the above $R_n$ is seen to be $\chi_{md}^2$.
\end{proof}

\subsection*{A.2. Proof of Theorem \ref{thm:localPower}}

We first sketch out the proof of Theorem \ref{thm:localPower}. 
Let $\bbeta^*$ be a specific parameter value under the null hypothesis 
and $\{F_k\}$ be the corresponding distribution functions.
Let $\{G_k\}$ be the set of distribution functions satisfying the DRM 
with parameter given by
$
  \bbeta_k
  =
  \bbeta_k^* + n_k^{-1/2} \bc_k
$,
$k = 1, \, \ldots, \, m$,
and $G_0=F_0$.
When the samples are generated from the $\{G_k\}$,
we still have that the DELR statistic is approximated by the quadratic
form on the RHS of (\ref{eq:RnExpansion}).
The limiting distribution of $R_n$ is therefore determined by that of 
$\bv = n^{-1/2} {\partial l_n(\btheta^*)} / {\partial \btheta}$.
According to Le Cam's third lemma \citetext{\citealt{vanderVaart2000}, 6.7},
$\bv$ has a specific limiting distribution under the
$\{G_k\}$ 
if $\bv$ and
$\sum_{k,j} \log \{{dG_k(x_{kj})}/{dF_k(x_{kj})}\}$,
under the $\{F_k\}$, 
are jointly normal with a particular mean and variance structure.
The core of the proof then is to establish that structure.

For each $k=0, \, \ldots, \, m$,
let $\Var_k(\cdot)$ and $\Cov_k(\cdot, \, \cdot)$ be the
variance and covariance operators with respect to $F_k$,
respectively.

\begin{lemma} \label{lemma:scoreAlt}
Under the conditions of Theorem \ref{thm:DELRT}
and the distribution functions $\{G_k\}$,
$\bv$ is asymptotically normal with mean
$
  \btau
  =
  \sum_{k=1}^m \sqrt{\rho_k}
  \Cov_k \{
    {\partial \cl_{k}(\btheta^*, \, x)}/{\partial \btheta}, \q^\T(x) \}
  \bc_k
$
and covariance matrix $V=U-UWU$ as given in Lemma
\ref{lemma:propScore}.
\end{lemma}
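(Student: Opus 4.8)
The plan is to apply Le Cam's third lemma, as the text preceding the lemma indicates. Since Lemma \ref{lemma:propScore} already establishes that under the null distributions $\{F_k\}$ the score $\bv$ is asymptotically $N(\boldsymbol{0}, V)$ with $V = U - UWU$, the remaining work is to (i) obtain the asymptotic behaviour under $\{F_k\}$ of the log--likelihood ratio $\Delta_n = \sum_{k,j}\log\{dG_k(x_{kj})/dF_k(x_{kj})\}$, (ii) establish joint asymptotic normality of $(\bv^\T, \Delta_n)^\T$ under $\{F_k\}$, and (iii) identify the cross--covariance, which is precisely the shift $\btau$ that Le Cam's lemma transfers to the mean of $\bv$ under $\{G_k\}$.

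First I would expand $\Delta_n$. Because $G_0 = F_0$ and both $G_k$ and $F_k$ are tilts of $F_0$, we have $\log\{dG_k(x)/dF_k(x)\} = (\alpha_k^G - \alpha_k^*) + n_k^{-1/2}\bc_k^\T\q(x)$ for $k\ge 1$, while the $k=0$ term vanishes. Writing $M_k(\bbeta) = \int\exp\{\bbeta^\T\q(x)\}\,dF_0(x)$ and using $\alpha_k = -\log M_k(\bbeta_k)$, a second--order Taylor expansion about $\bbeta_k^*$ gives $\alpha_k^G - \alpha_k^* = -n_k^{-1/2}\E_k\{\q(x)\}^\T\bc_k - (1/2)n_k^{-1}\bc_k^\T\Var_k\{\q(x)\}\bc_k + o(n_k^{-1})$, since the gradient and Hessian of $\log M_k$ at $\bbeta_k^*$ equal $\E_k\{\q(x)\}$ and $\Var_k\{\q(x)\}$. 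Summing over $j$ within each sample then yields $\Delta_n = \sum_{k=1}^m n_k^{-1/2}\bc_k^\T\sum_j\{\q(x_{kj}) - \E_k(\q)\} - (1/2)\sigma^2 + o_p(1)$, where $\sigma^2 = \sum_{k=1}^m\bc_k^\T\Var_k\{\q(x)\}\bc_k$. Under $\{F_k\}$ the leading term is a sum of centred iid averages, so $\Delta_n \to N(-\sigma^2/2, \sigma^2)$, which is the required contiguity structure.

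Next I would establish joint normality and the covariance. Reusing the decomposition from the proof of Lemma \ref{lemma:propScore}, the contribution of sample $k$ to $\bv$ is $n^{-1/2}\sum_j\{\partial\cl_{n,k}(\btheta^*,x_{kj})/\partial\btheta - \bmu_k\}$, and since distinct samples are independent, $(\bv^\T,\Delta_n)^\T$ is a sum of independent triangular arrays; a multivariate Lindeberg CLT gives joint normality, the moment conditions being guaranteed by the finiteness of all moments of $\q(x)$ noted before A.1. For the cross--covariance, only the matched ($i=j$) terms within each sample survive, and using $\hat\lambda_k^{1/2}n_k^{-1/2}=n^{-1/2}$ the sample--$k$ contribution is $(n_k/n)^{1/2}\Cov_k\{\partial\cl_k(\btheta^*,x)/\partial\btheta, \q^\T(x)\}\bc_k$, converging to $\sqrt{\rho_k}\,\Cov_k\{\partial\cl_k(\btheta^*,x)/\partial\btheta, \q^\T(x)\}\bc_k$; summing over $k$ reproduces $\btau$ exactly.

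Having verified that under $\{F_k\}$ the pair $(\bv^\T,\Delta_n)^\T$ is jointly asymptotically normal with mean $(\boldsymbol{0}^\T,-\sigma^2/2)^\T$ and covariance blocks $V$, $\btau$ and $\sigma^2$, Le Cam's third lemma immediately yields that under $\{G_k\}$ the score $\bv$ is asymptotically $N(\btau, V)$, completing the proof. The main obstacle is the bookkeeping in the Taylor expansion of the normalizing constants $\alpha_k^G - \alpha_k^*$: I must confirm that it is valid uniformly enough that the aggregated remainder is $o_p(1)$ and that the induced drift is exactly $-\sigma^2/2$, since it is this precise cancellation producing the LAN form that makes the covariance $V$ carry over unchanged to the alternative.
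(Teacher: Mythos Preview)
Your proposal is correct and follows essentially the same route as the paper's proof: both expand the log--likelihood ratio $\sum_{k,j}\log\{dG_k/dF_k\}$ to second order in $n_k^{-1/2}$ (your cumulant--generating--function expansion of $\log M_k$ is exactly the paper's direct expansion of $\exp\{\alpha_k^*-\alpha_k\}=\int\exp\{n_k^{-1/2}\bc_k^\T\q\}\,dF_k$), establish joint asymptotic normality of $(\bv,\Delta_n)$ with the LAN structure $(\boldsymbol{0},-\sigma^2/2)$ and covariance blocks $V$, $\btau$, $\sigma^2$, and then invoke Le~Cam's third lemma. The only cosmetic difference is that the paper writes the cross--covariance directly as $\sqrt{\hat\lambda_k}\,\Cov_k\{\partial\cl_k/\partial\btheta,\q^\T\}\bc_k\to\sqrt{\rho_k}\,\Cov_k\{\cdot,\cdot\}\bc_k$ without the intermediate remark about $\hat\lambda_k^{1/2}n_k^{-1/2}=n^{-1/2}$.
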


\begin{proof}[\sc{Proof of Lemma \ref{lemma:scoreAlt}}]

We first expand
$
  \bw_k =
  \sum_{j=1}^{n_k} \log \{{\Deriva G_k(x_{kj})}/{\Deriva F_k(x_{kj})}\}
$.
Notice that
\begin{align*} 
dG_k(x)/dF_k(x) 
&= 
\exp\{\alpha_k + \bbeta_k \q(x)\}/\exp\{\alpha_k^* + \bbeta_k^* \q(x)\}\\
&= 
\exp\{ \alpha_k -\alpha_k^* + n_k^{-1/2} \bc_k \q(x) \}.
\end{align*}
Because $\alpha_k$ and $\alpha_k^*$ are normalization constants,
we have
\[
\exp\{ \alpha_k^* -\alpha_k\}
=
\int 
\exp\{\alpha_k^* + ({\bbeta_k^*}^\T + n_k^{-1/2} \bc_k^\T )\q(x)\}  dF_0(x).
\]
Ignoring terms of order $n^{-3/2}$ and higher, it leads to
\begin{align*}
\exp\{ \alpha_k^* -\alpha_k\}
& =
\int 
\exp \{  n_k^{-1/2} \bc_k^\T \q(x) \}
\exp\big\{\balpha_k^* + {\bbeta_k^*}^\T \q(x) \big\} d F_0(x)\\
&\approx
\int 
\big\{ 
1 +   n_k^{-1/2} \bc_k^\T \q(x)  + (2n_k)^{-1} (\bc_k^\T \q(x))^2
\big\} d F_k(x).
\end{align*}
Denote $\bnu_k = \E_k \q(x)$.
Then, it is further simplified to
\[
\exp\{ \alpha_k^* -\alpha_k\}
\approx
1 + n_k^{-1/2} \bc_k^\T \bnu_k +
(2n_k)^{-1} \bc_k^\T \E_k(\q^2(x)) \bc_k.
\]
Hence, ignoring a $O(n^{-3/2})$ term, we have
\begin{align*}
\log \{dG_k(x)/dF_k(x)\}
&\approx
n_k^{-1/2} \bc_k \q(x) 
- \log \{
1 + n_k^{-1/2} \bc_k^\T \bnu_k +
(2n_k)^{-1} \bc_k^\T \E_k(\q^2(x)) \bc_k
\}.
\end{align*}
Write $\bsigma_k = \Var_k(\q(x))$.
Expanding the logarithmic term on the RHS, we get
\begin{align*}
&\log \{ 1 + n_k^{-1/2} \bc_k^\T \bnu_k +
(2n_k)^{-1} \bc_k^\T \E_k(\q^2(x)) \bc_k\}
\\
=&
n_k^{-1/2} \bc_k^\T \bnu_k +
(2n_k)^{-1} \bc_k^\T \E_k(\q^2(x)) \bc_k
-
n_k \bc_k^\T \{\bnu_k \bnu_k^\T\} \bc_k
+ O(n^{-3/2})
\\
=&
n_k^{-1/2} \bc_k^\T \bnu_k
+(2 n_k)^{-1} \bc_k^\T \bsigma_k \bc_k
+ O(n^{-3/2}).
\end{align*}
Therefore 
\begin{align*}
\log \{dG_k(x)/dF_k(x)\}
&=
n_k^{-1/2} \bc_k^\T \{ \q(x) - \bnu_k\}
-(2 n_k)^{-1} \bc_k^\T \bsigma_k \bc_k
+ O(n^{-3/2}).
\end{align*}
Summing over $j$,  we get, for each $k$,
\begin{align*}
  \bw_k=
\sum_{j=1}^{n_k} \log \{{\Deriva G_k(x_{kj})}/{\Deriva F_k(x_{kj})}\}
&=
 n_k^{-1/2} \bc_k^\T  \sum_{j = 1}^{n_k} \{ \q(x_{kj}) - \bnu_k\}
 - 
 (1/2)  \bc_k^\T \bsigma_k \bc_k
    + O(n^{-1/2}).
\end{align*}
When $k=0$, we have $\bc_0 = 0$.

Recall that
$
l_n(\btheta^*) = \sum_{k,j} \cl_{n,k}(\btheta^*, \, x_{kj})
$ and $\hat \lambda_k = n_k/n$ whose limit is $\rho_k$,
we have
\begin{align*}
  \left( 
  \begin{array}{c}
    \bv
    \\
    \sum_{k} \bw_k
  \end{array} 
  \right )
&\approx
  \sum_{k=0}^{m} \frac{1}{\sqrt{n_k}} \sum_{j = 1}^{n_k}
  \left( \begin{array}{c}
    \sqrt{\rho_k}
    \left\{
      {\partial \cl_{n,k}(\btheta^*, \, x_{kj})}/{\partial \btheta}
      -
      \bmu_k
    \right\}
    \\
    \bc_k^\T \{ \q(x_{kj}) - \bnu_k\}
  \end{array} \right)
  -
  \sum_{k=0}^{m}
  \left( \begin{array}{c}
    \boldsymbol{0}
    \\
    \frac{1}{2} \bc_k^\T \bsigma_k \bc_k
  \end{array} \right),
\end{align*}
which is seen to be jointly asymptotically normal
under the null distributions $\{F_k\}$. The corresponding mean
vector and variance matrix are given by
\[
  \left( \boldsymbol{0}^\T, \ 
    - \frac{1}{2} \sum_k \bc_k^\T \bsigma_k \bc_k
  \right)^\T
  \ \text{and} \ 
  \left( \begin{array}{cc}
    V &   \btau    \\
    \btau^\T    &  \sum_k \bc_k^\T \bsigma_k \bc_k
  \end{array} \right),
\]
where $ \btau$ is the one given in the Lemma.
Because the second entry of the mean vector
equals negative half of the lower--right entry of the covariance matrix,
the condition of Le Cam's third lemma is satisfied.
By that lemma, we conclude that $\bv$ has a normal limiting
distribution with mean $\btau$ and covariance matrix $V$
under the local alternative distributions $\{G_k\}$.
\end{proof}

\begin{proof}[\sc{Proof of Theorem \ref{thm:localPower}}]
  We first show that, under the $\{G_k\}$, the DELR statistic $R_n$ is still
  approximated by the quadratic form on the RHS of
  (\ref{eq:RnExpansion}).

  Under the $\{G_k\}$, we still have
  $
  -n^{-1}  {\partial^2 l_n(\btheta^*)} /\partial \btheta \partial
  \btheta^\T  \to U$
  and
  $
    \bv
    = O_p(1)
  $.
  In addition, $\hat \btheta$ still admits the expansion
  \begin{align*}
    \sqrt{n}(\hat \btheta - \btheta^*)
    =
    U^{-1} \bv + o_p(1)
    =
    O_p(1),
  \end{align*}
  and hence it is root--$n$ consistent for $\btheta^*$.
  Similarly, the constrained MELE $\tilde \btheta$ is also root--$n$
  consistent for $\btheta^*$ under the $\{G_k\}$.
  The root--$n$ consistency of $\hat \btheta$ and
  $\tilde \btheta$ imply
  \begin{align*}
    R_n
    = 
    \bxi^\T
    \{ \Lambda^{-1} - J {( J^\T \Lambda J )}^{-1} J^\T \}
    \bxi
    + o_p(1)
  \end{align*}
  when $q < md$, and
  $R_n = \bxi^\T \Lambda^{-1} \bxi + o_p(1)$ when $q=md$.
  The matrix in the quadratic form of the expansion of $R_n$ is the
  same as that in (\ref{eq:RnExpansion}).
  What has changed is the distribution of
  $
    \bxi
    =
    (- U_{\bbeta\balpha} U_{\balpha\balpha}^{-1}, \  I_{md})
    \bv
  $.

  By Lemma \ref{lemma:scoreAlt}, under the local alternative
  $\{G_k\}$, $\bv$ is asymptotically $N(\btau, \, V)$. Hence $\bxi$ also
  has a normal limiting distribution. Since the
  asymptotic covariance matrix of $\bv$ is the same as that
  under the $\{F_k\}$,
  the asymptotic covariance matrix of $\bxi$ is still $\Lambda$ as we
  have shown in the proof of Theorem \ref{thm:DELRT}.
  The mean of the limiting distribution of $\bxi$ now is
  $\bmu = (- U_{\bbeta\balpha} U_{\balpha\balpha}^{-1}, \  I_{md})
  \btau = \Lambda \etab$,
  where $\etab$ is defined in Theorem \ref{thm:localPower} and the
  last equality is derived using (\ref{eq:keyObservation2}).

  In the proof of Theorem \ref{thm:DELRT}, we have verified that the
  matrix
  \begin{align*}
    A =
  \Lambda^{1/2}
    \{ \Lambda^{-1} - J {( J^\T \Lambda J )}^{-1} J^\T \}
  \Lambda^{1/2}
  \end{align*}
  is idempotent with rank $q$.
  Hence, by Corollary 5.1.3a of \citet{Mathai1992}, the quadratic form
  in the above expansion of $R_n$, and hence $R_n$, has the claimed 
  non--central chi--square limiting distribution.

  In the last step we verify the condition for positiveness of the
  non--central parameter $\delta^2$.
  When $q=md$, $\delta^2  = \etab^\T \Lambda \etab > 0$ because
  $\Lambda$ is positive definite. When $q<md$,
  $
    \delta^2
    =
    (\etab^\T \Lambda^{1/2})
    A
    (\Lambda^{1/2} \etab)
  $.
  We verified that $A$ is an idempotent matrix. Hence, $A$ is positive
  semidefinite and $\delta^2 \ge 0$. Moreover, $\delta^2 = 0$ if and only if
  $\Lambda^{1/2} \etab$ is in the null space of $A$.
  The null space of $A$ is the column space of
  $
    I -A =\Lambda^{1/2} J \big(J^\T \Lambda J\big)^{-1} J^\T \Lambda^{1/2}
  $,
  which is just the column space of $\Lambda^{1/2}J$.
  It is easily verified that $\Lambda^{1/2} \etab$ is in the column
  space of $\Lambda^{1/2}J$ if and only if $\etab$ is in the column
  space of $J$. Hence $\Lambda^{1/2} \etab$ is in the null space of
  $A$ and $\delta^2 = 0$ if and only if $\etab$ is in the column space
  of $J$.
\end{proof}

\subsection*{A.3. Proof of Theorem \ref{thm:pwComp}}

We first introduce a useful notation for Schur complements
that will be frequently used in the subsequent proofs.
Let matrix
\begin{align*}
  M
  =
  \begin{pmatrix}
    {A}
    &
    {B}
    \\
    {C}
    &
    {D}
  \end{pmatrix}
\end{align*}
be nonsingular.
We write $M/A = D - CA^{-1}B$ and call it the \emph{Schur complement of $M$
with respect to its upper--left block $A$}.
Also, we write $M/D = A - BD^{-1}C$ and call it the \emph{Schur
complement of $M$ with respect to its lower--right block $D$}.

Recall that we defined two DELRT statistics $R_n^{(1)}$ and
$R_n^{(2)}$ which are constructed using the samples from
only the first $r+1$ populations $F_0, \, \cdots, \, F_r$,
and the samples from all the populations, respectively.
Let $U$ be the information matrix based on all $m+1$ samples
($R_n^{(2)}$), and $\tilde{U}$ be that based on the first $r+1$
samples ($R_n^{(1)}$). Similar to the partition of $U$, we partition
$\tilde{U}$ to
$\tilde{U}_{\balpha\balpha}$, $\tilde{U}_{\balpha\bbeta}$,
$\tilde{U}_{\bbeta\balpha}$ and $\tilde{U}_{\bbeta\bbeta}$,
and similar to the definition $\Lambda = U/U_{\balpha\balpha}$
given in Theorem \ref{thm:localPower}, we define
$\tilde{\Lambda} = \tilde{U}/\tilde{U}_{\balpha\balpha}$.
We also partition $\Lambda$ as
\begin{align*}
  \Lambda
  =
  \begin{pmatrix}
    \Lambda_a & \Lambda_b
    \\
    \Lambda_b^T & \Lambda_c
  \end{pmatrix},
\end{align*}
where $\Lambda_a$ is the upper--left $rd \times rd$ block of
$\Lambda$.

The null hypothesis of (\ref{hp:comparison}) under
investigation contains a constraint $\g(\bzeta)=\bf 0$ with
$\bzeta^\T = ( \bbeta_1^\T, \, \ldots, \, \bbeta_r^\T )$
related only to populations $F_0, \cdots, F_r$. As noted in
the proof of Theorem \ref{thm:DELRT}, this null constraint
is equivalent to $\bzeta = \altg(\bgamma)$ for some smooth
function $\altg$: $\mathbb{R}^{rd-q} \to \mathbb{R}^{rd}$
and parameter vector $\bgamma$. Denote the Jacobian of
$\altg$ evaluated at $\bgamma^*$ as $J$.
By Theorem \ref{thm:localPower}, under the $\{G_k\}$ defined
by the local alternative model (\ref{eq:localAltCompare}),
$R_n^{(1)}$ and $R_n^{(2)}$ both have non--central
chi--square limiting distributions of $q$ degrees of
freedom, but with different non--central parameters
$\delta_1^2$ and $\delta_2^2$, respectively.
We also know that for $R_n^{(1)}$,
\begin{align*} 
  \delta^2_1
  &=
  \rho \tilde{\etab}^\T
  \left\{
    \tilde{\Lambda}
    -
    \tilde{\Lambda} J (J^\T \tilde{\Lambda}J)^{-1} J^\T \tilde{\Lambda}
  \right\}
  \tilde{\etab},
\end{align*}
where
$\tilde{\etab} = (\rho_1^{-1/2} \bc_1^\T, \, \ldots, \rho_r^{-1/2}
\bc_r^\T )$.
Moreover, under the same local alternative model, for
$R_n^{(2)}$, we have
$\etab^\T = (\tilde{\etab}^\T, \, 0_{m-r}^\T)$
and the corresponding Jacobian matrix of the null mapping is
$J_2 = \diag(J, \, I_{(m-r)d})$.
Thus
\begin{align*} 
  \delta^2_2
  &=
  \etab^\T
  \left\{
    \Lambda
    -
    \Lambda J_2
    (J_2^\T \Lambda J_2)^{-1}
    J_2^\T \Lambda
  \right\}
  \etab.
\end{align*} 
Let $A$ denote the upper--left $rd \times rd$ block of 
$
  \Lambda
  -
  \Lambda J_2
  (J_2^\T \Lambda J_2)^{-1}
  J_2^\T \Lambda
$.
Since $\etab$ consists of $\tilde{\etab}$ and a zero vector,
we have
\begin{align*}
  \delta^2_2
  =
  \tilde{\etab}^\T
  A
  \tilde{\etab},
\end{align*}
The upper--left block of $\Lambda$ is $\Lambda_a$.
By the quadratic form decomposition formula of Lemma
\ref{lemma:matDecomp}, the upper--left block of
$
  \Lambda J_2
  (J_2^\T \Lambda J_2)^{-1}
  J_2^\T \Lambda
$
is found to be
\begin{align*}
  (\Lambda/\Lambda_c) J
  (J^\T (\Lambda/\Lambda_c) J)^{-1}
  J^\T (\Lambda/\Lambda_c)
  +
  \Lambda_b \Lambda_c^{-1} \lambda_b^T.
\end{align*}
Hence, the expression of $\delta^2_2$ becomes
\begin{align*} 
  \delta^2_2
  &=
  \tilde{\etab}^\T
  A
  \tilde{\etab}
  \\
  &=
  \tilde{\etab}^\T
  \left\{
    \Lambda_a - \Lambda_b \Lambda_c^{-1} \lambda_b^T
    -
    (\Lambda/\Lambda_c) J
    (J^\T (\Lambda/\Lambda_c) J)^{-1}
    J^\T (\Lambda/\Lambda_c)
  \right\}
  \tilde{\etab}
  \\
  &=
  \tilde{\etab}^\T
  \left\{
    (\Lambda/\Lambda_c)
    -
    (\Lambda/\Lambda_c) J
    (J^\T (\Lambda/\Lambda_c) J)^{-1}
    J^\T (\Lambda/\Lambda_c)
  \right\}
  \tilde{\etab}.
\end{align*}
Therefore, to show the claimed result $\delta_2^2 \ge \delta_1^2$, it
suffices to show that
\begin{align} \label{eq:pwCompToShow}
  (\Lambda/\Lambda_c)
  -
  (\Lambda/\Lambda_c) J
  (J^\T (\Lambda/\Lambda_c) J)^{-1}
  J^\T (\Lambda/\Lambda_c)
  \ge
  \rho \left\{
    \tilde{\Lambda}
    -
    \tilde{\Lambda} J
    (J^\T \tilde{\Lambda} J)^{-1}
    J^\T \tilde{\Lambda}
  \right\}.
\end{align}
We prove this equality in the sequel.

Recall that we defined $\btheta_k^\T = (\alpha_k, \, \bbeta_k^\T)$.
Denote the information matrix with respect to
${(\btheta_1^\T, \, \ldots, \btheta_r^\T)}^\T$
under the DRM based on the first $r+1$ samples as $U_1$,
and that with respect to
${(\btheta_1^\T, \, \ldots, \btheta_m^\T)}^\T$
under the DRM based on all $m+1$ samples as $U_2$.
Let $U_{2,c}$ be the lower--right $(m-r)(d+1) \times (m-r)(d+1)$ block
of $U_2$.
Let $ \rho = \lim_{n \to \infty} (\sum_{k=0}^r n_k) / n$.

\begin{lemma} \label{lemma:estComp}
  Adopt the conditions of Theorem \ref{thm:DELRT}.
  We have:
  
  (1) $U_2/U_{2,c} \ge \rho U_1$. That is, $U_2/U_{2,c} - \rho U_1$ is
  positive semidefinite.

  (2)  $\Lambda/\Lambda_c \ge \rho \tilde{\Lambda}$.
\end{lemma}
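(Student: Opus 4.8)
The plan is to reduce both parts to one pointwise Schur-complement identity for the matrix $H(\btheta^*,x)$ of (\ref{eq:def}) together with a matrix Jensen-type inequality. First I would rewrite the information matrices grouped by $\btheta_k=(\alpha_k,\bbeta_k^\T)$ rather than by $(\balpha,\bbeta)$. Collecting the blocks of (\ref{eq:Uexpression}) and using $\bQ(x)=(1,\q^\T(x))^\T$ gives, for the full model, $U_2=\E_0\{H(\btheta^*,x)\otimes\bQ\bQ^\T\}$ with $H$ the full $m\times m$ matrix. A parallel computation for the sub-model built on $F_0,\dots,F_r$ with the renormalised proportions $\rho_k/\rho$ gives $U_1=\E_0\{\tilde H\otimes\bQ\bQ^\T\}$, where, writing $h_k$ for the $k$-th entry of $\h(\btheta^*,x)$, $\h_r=(h_1,\dots,h_r)^\T$ and $s_r=\rho_0+\sum_{k=1}^r h_k$, one has $\tilde H=\rho^{-1}(\diag\{\h_r\}-\h_r\h_r^\T/s_r)$. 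Partitioning $H$ by the index sets $a=\{1,\dots,r\}$ and $b=\{r+1,\dots,m\}$, a Sherman--Morrison computation yields the clean identity $H/H_{bb}=\diag\{\h_r\}-\h_r\h_r^\T/s_r=\rho\,\tilde H$, so that $\rho U_1=\E_0\{(H/H_{bb})\otimes\bQ\bQ^\T\}$.

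Part (1) then reads $\E_0\{M\}/\E_0\{M_{bb}\}\ge\E_0\{M/M_{bb}\}$ with $M=H\otimes\bQ\bQ^\T$; that is, the Schur complement of the average dominates the average of the pointwise Schur complements. I would establish this from the variational description $N/N_{bb}=\min_X\begin{pmatrix}I\\X\end{pmatrix}^\T N\begin{pmatrix}I\\X\end{pmatrix}$, a consequence of Lemma \ref{lemma:matDecomp}. The delicate point is that each summand $M(x)=H\otimes\bQ\bQ^\T$ has a \emph{singular} lower-right block, since $\bQ\bQ^\T$ is rank one, so its pointwise Schur complement cannot be formed by inversion. This is bypassed by completing the square with an $x$-dependent centre: with $B=H_{ab}\otimes\bQ\bQ^\T$, $C=H_{bb}\otimes\bQ\bQ^\T$ and $D(x)=H_{bb}^{-1}H_{ba}\otimes I_{d+1}$ one checks $B=D^\T C$ and $D^\T C D=H_{ab}H_{bb}^{-1}H_{ba}\otimes\bQ\bQ^\T$, whence for every fixed nonrandom $X$ and every $x$, $\begin{pmatrix}I\\X\end{pmatrix}^\T M(x)\begin{pmatrix}I\\X\end{pmatrix}-\rho\tilde H(x)\otimes\bQ\bQ^\T=(D(x)+X)^\T C(x)(D(x)+X)\ge 0$ because $C(x)\ge 0$. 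Taking $\E_0$ gives $\begin{pmatrix}I\\X\end{pmatrix}^\T U_2\begin{pmatrix}I\\X\end{pmatrix}\ge\rho U_1$ for every $X$; evaluating at the minimiser $X^\ast=-U_{2,c}^{-1}U_{2,ba}$ (with $U_{2,ba}$ the lower-left block), the left side becomes $U_2/U_{2,c}$, so $U_2/U_{2,c}\ge\rho U_1$.

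For part (2) I would derive $\Lambda/\Lambda_c\ge\rho\tilde\Lambda$ from part (1) by iterated Schur complementation. Recall $\Lambda=U/U_{\balpha\balpha}$ and $\tilde\Lambda=\tilde U/\tilde U_{\balpha\balpha}$, and note that $U_1$ coincides with $\tilde U$ up to the reordering of coordinates. Group the full parameter vector as $\bbeta_a=(\bbeta_1,\dots,\bbeta_r)$, $G_2=\{\alpha_{r+1},\bbeta_{r+1},\dots,\alpha_m,\bbeta_m\}$ and $\tilde\balpha=(\alpha_1,\dots,\alpha_r)$, and write $N/[\tilde\balpha]$ for the Schur complement of $N$ with respect to its $\tilde\balpha$ block. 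By the quotient (Crabtree--Haynsworth) property of Schur complements (see, e.g., \citealt{Harville2008}), eliminating $G_2$ from $U$ leaves precisely the full-model information about $\btheta_1,\dots,\btheta_r$, i.e. $U/U_{G_2}=U_2/U_{2,c}$, and then eliminating $\tilde\balpha$ gives $(U_2/U_{2,c})/[\tilde\balpha]=U/U_{G_2\cup\tilde\balpha}$. Since $G_2\cup\tilde\balpha=\balpha\cup\{\bbeta_{r+1},\dots,\bbeta_m\}$, the same property applied to $\Lambda=U/U_{\balpha\balpha}$ identifies $\Lambda/\Lambda_c=U/U_{\balpha\cup\{\bbeta_{r+1},\dots,\bbeta_m\}}=(U_2/U_{2,c})/[\tilde\balpha]$. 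Finally, Schur complementation with respect to a fixed block is Löwner-monotone: if $P\ge Q$ in the same partition then, with $X_P=-P_{bb}^{-1}P_{ba}$, $P/P_{bb}=\begin{pmatrix}I\\X_P\end{pmatrix}^\T P\begin{pmatrix}I\\X_P\end{pmatrix}\ge\begin{pmatrix}I\\X_P\end{pmatrix}^\T Q\begin{pmatrix}I\\X_P\end{pmatrix}\ge Q/Q_{bb}$. Applying this to the $\tilde\balpha$ block of the part-(1) inequality $U_2/U_{2,c}\ge\rho\tilde U$, and using $(\rho\tilde U)/(\rho\tilde U_{\balpha\balpha})=\rho\tilde\Lambda$, yields $\Lambda/\Lambda_c\ge\rho\tilde\Lambda$.

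The main obstacle is the step in part (1) of promoting the exact pointwise identity $H/H_{bb}=\rho\tilde H$ to a Löwner inequality between the \emph{expected} matrices: because $\bQ\bQ^\T$ is rank one, the block to be eliminated is singular in every summand, and one cannot simply average pointwise Schur complements. The completion of square at a nonrandom $X$, with the $x$-dependent centre $D(x)$, is exactly what circumvents this. Once part (1) is in hand, part (2) is bookkeeping with the quotient property and the monotonicity of the Schur complement.
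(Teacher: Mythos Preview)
Your proof is correct, and the overall architecture matches the paper's: establish the information-matrix inequality (1), then derive (2) from (1) via the quotient formula for iterated Schur complements together with L\"owner monotonicity of the Schur complement. The technical execution of part~(1), however, is genuinely different. The paper works only in the case $m=r+1$, writes $D=U_2-\diag(\rho U_1,0)$, and shows by a direct calculation that $D=\rho_m\,\E_0\{\bw(x)\bw^\T(x)\otimes\bQ(x)\bQ^\T(x)\}$ for an explicit vector $\bw(x)$, hence $D\ge 0$; the general case is then obtained by induction on $m-r$. Your argument instead proves the pointwise identity $H/H_{bb}=\rho\tilde H$ by Sherman--Morrison and then uses the variational characterization $N/N_{bb}=\min_X\,(I;X)^\T N(I;X)$ together with an $x$-dependent completion of squares to pass to expectations, handling arbitrary $m$ in one step without induction. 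This is a cleaner ``matrix Jensen'' formulation and makes transparent why the singular pointwise block $H_{bb}\otimes\bQ\bQ^\T$ causes no trouble; the paper's route, on the other hand, yields an explicit rank-one expression for the gap $D$, which can be useful if one later wants to quantify how much is gained by adjoining the extra samples. For part~(2) your derivation is essentially a compressed version of the paper's: both identify $\Lambda/\Lambda_c$ with the Schur complement of $U$ (equivalently $U_2$) with respect to the block indexed by $\{\balpha\}\cup\{\bbeta_{r+1},\dots,\bbeta_m\}$ via the quotient formula, and then invoke the monotonicity $P\ge Q\Rightarrow P/P_{bb}\ge Q/Q_{bb}$; the paper carries this out through the auxiliary matrices $\Psi,\Omega,\Omega_1,\Omega_2$, whereas you phrase it as successive elimination of coordinate blocks.
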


\begin{lemma} \label{lemma:generalizedIneq}
Let $A$ be a $s \times s$ positive definite matrix and $B$ be a $s \times s$
positive semidefinite matrix. Also let $X$ and $Y$ be $s \times t$
matrices, and suppose the column space of $Y$ is contained in that of
$B$. Then
\begin{align*}
  (X+Y)^\T(A + B)^{-1}(X+Y)
  \le
  X^\T A^{-1} X
  +
  Y^\T B^\dagger Y
\end{align*}
where $B^\dagger$ is the Moore--Penrose pseudoinverse of $B$.
\end{lemma}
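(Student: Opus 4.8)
The plan is to prove the inequality by a matrix completing-the-square (variational) argument, treating the $A$ and $B$ contributions separately. The starting observation is the elementary Loewner-order identity: for any symmetric positive definite $M$ and any conformable matrices $X$ and $W$,
\[
  W^\T X + X^\T W - W^\T M W \preceq X^\T M^{-1} X,
\]
since the gap equals $(W - M^{-1}X)^\T M (W - M^{-1}X) \succeq 0$, with equality at $W = M^{-1}X$. Applying this with $M = A+B$ and evaluating at the minimizer $W^* = (A+B)^{-1}(X+Y)$ shows that the left-hand side of the lemma is exactly $(W^*)^\T(X+Y) + (X+Y)^\T W^* - (W^*)^\T(A+B)W^*$.

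Next I would split this expression along $X+Y$ and $A+B$ into an ``$A$-part'' $(W^*)^\T X + X^\T W^* - (W^*)^\T A W^*$ and a ``$B$-part'' $(W^*)^\T Y + Y^\T W^* - (W^*)^\T B W^*$. The $A$-part is bounded by $X^\T A^{-1} X$ by a second application of the variational identity (now with $M = A$, which is invertible). For the $B$-part, the key computation is to expand the manifestly positive semidefinite quantity $(BW^* - Y)^\T B^\dagger (BW^* - Y)$. Using $B B^\dagger B = B$ together with the hypothesis that the column space of $Y$ lies in that of $B$ — which gives $B B^\dagger Y = Y$ and hence $Y^\T B^\dagger B = Y^\T$ — this expands to $(W^*)^\T B W^* - (W^*)^\T Y - Y^\T W^* + Y^\T B^\dagger Y$. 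Its nonnegativity is precisely the statement that the $B$-part is $\preceq Y^\T B^\dagger Y$. Adding the two bounds yields the claim.

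The main obstacle is the handling of the pseudoinverse when $B$ is singular: the ordinary completing-the-square step used for $A$ does not transcribe verbatim because $B^{-1}$ need not exist, so one must work with $B^\dagger$ and verify the projection identities $B B^\dagger B = B$ and $B B^\dagger Y = Y$. The column-space hypothesis on $Y$ is exactly what makes $B B^\dagger Y = Y$ hold, and without it the $B$-part inequality fails; so this hypothesis is not a technical convenience but the crux of the argument. Everything else is routine Loewner-order bookkeeping. One can reach the same conclusion through a Schur-complement argument on the block matrix carrying $A+B$ in a corner, but I expect the variational route above to be the shortest and to isolate the role of the column-space condition most transparently.
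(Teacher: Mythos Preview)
Your proof is correct. The variational identity, the split into the $A$-part and $B$-part, and the expansion of $(BW^* - Y)^\T B^\dagger (BW^* - Y)$ using $BB^\dagger B = B$ and $BB^\dagger Y = Y$ all check out, and the column-space hypothesis is invoked exactly where it is needed.

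However, your route differs from the paper's. The paper takes precisely the Schur-complement path you mention in your last sentence: it writes
\[
  \begin{pmatrix} A+B & X+Y \\ (X+Y)^\T & X^\T A^{-1}X + Y^\T B^\dagger Y \end{pmatrix}
  =
  \begin{pmatrix} A & X \\ X^\T & X^\T A^{-1}X \end{pmatrix}
  +
  \begin{pmatrix} B & Y \\ Y^\T & Y^\T B^\dagger Y \end{pmatrix},
\]
observes that each summand on the right is positive semidefinite (the second one by the generalized Schur-complement criterion, which is where the column-space condition on $Y$ enters), and then reads off the desired inequality as the Schur complement of the left-hand block matrix with respect to $A+B$. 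That argument is shorter on the page but leans on cited matrix-theory results; your variational proof is more self-contained, makes the role of the column-space hypothesis fully explicit through the identity $BB^\dagger Y = Y$, and avoids any appeal to generalized Schur complements. Both are clean; yours would be preferable in a setting where the reader may not have the Schur-complement characterizations of positive semidefiniteness at hand.
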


The proofs of the above two lemmas are given after the proof of
Theorem \ref{thm:pwComp}.

\begin{proof}[\sc{Proof of Theorem \ref{thm:pwComp}}]

  We prove equality \eqref{eq:pwCompToShow}.
  Define
  $
    M = \Lambda/\Lambda_c - \rho\tilde{\Lambda}
  $.
  In Lemma \ref{lemma:generalizedIneq}, let
  $A = \rho J^\T \tilde{\Lambda} J$, $B = J^\T M J$,
  $X = \rho J^\T \tilde{\Lambda}$, $Y = J^\T M$.
  Then $A + B = J^\T (\Lambda/\Lambda_c) J$ and
  $X + Y =  J^\T (\Lambda/\Lambda_c)$.
  Matrix $A$ is positive definite because $\tilde\Lambda$ is positive
  definite and $J$ is of full rank.
  $B$ is positive semidefinite because $M$ is positive semidefinite by
  Lemma \ref{lemma:estComp} (2).
  Moreover, it is easily seen that the column space of $Y$ is the same
  as that of $B$. Hence the conditions of Lemma
  \ref{lemma:generalizedIneq} are satisfied, and we have
  \begin{align*}
    &(\Lambda/\Lambda_c) J
    (J^\T (\Lambda/\Lambda_c) J)^{-1}
    J^\T (\Lambda/\Lambda_c)
    \le
    \rho \tilde{\Lambda} J
    (J^\T \tilde{\Lambda} J)^{-1}
    J^\T \tilde{\Lambda}
    +
    M J
    (J^\T M J)^\dagger
    J^\T M.
  \end{align*}
  The above inequality and
  $\Lambda/\Lambda_c = \rho\tilde{\Lambda} + M$
  imply that
  \begin{align*}
    &(\Lambda/\Lambda_c)
    -
    (\Lambda/\Lambda_c) J
    (J^\T (\Lambda/\Lambda_c) J)^{-1}
    J^\T (\Lambda/\Lambda_c)
    \\
    \ge
    &
    \rho \{
      \tilde{\Lambda}
      -
      \tilde{\Lambda} J
      (J^\T \tilde{\Lambda} J)^{-1}
      J^\T \tilde{\Lambda}
    \}
    +
    \{
      M
      -
      M J
      (J^\T M J)^\dagger
      J^\T M 
    \}.
  \end{align*}
  The term
  $
    M - M J (J^\T M J)^\dagger J^\T M 
  $
  is positive semidefinite because
  \begin{align*}
    M - M J (J^\T M J)^\dagger J^\T M
    =
    M^{1/2}
    \{ I - M^{1/2} J (J^\T M J)^\dagger J^\T M^{1/2} \}
    M^{1/2},
  \end{align*}
  and 
  $
    I
    -
    M^{1/2} J
    (J^\T M J)^\dagger
    J^\T M^{1/2}
  $
  is easily verified to be idempotent, hence positive semidefinite.
  Therefore inequality (\ref{eq:pwCompToShow}) holds and the claimed
  result is true.
\end{proof}

\begin{proof}[\sc{Proof of Lemma \ref{lemma:estComp} (1)}]

  We prove the result for $m=r+1$, namely $R_n^{(1)}$ uses all
  sample except for the last one. The general result is true by
  mathematical induction.

  Let $U_{2,a}$ be the upper--left $r(d+1) \times r(d+1)$ block, and
  $U_{2,b}$ be the upper--right $r(d+1) \times (m-r)(d+1)$ block, of
  $U_2$. Note that
  $U_2/U_{2,c} = U_{2,a} - U_{2,b}U_{2,c}^{-1}U_{2,b}^\T$,
  so to show the claimed result of
  $U_2/U_{2,c} \ge \rho \tilde{U_1}$, it suffices to show that
  \begin{align*} 
    (U_{2,a} - \rho U_1) - U_{2,b} U_{2,c}^{-1} U_{2,b}^\T
  \end{align*}
  is positive semidefinite.
  Notice that the above matrix
  is the Shur complement of
  \begin{align} \label{eq:infomatCompDexpression}
    D = 
    \left( \begin{array}{cc}
      U_{2,a} - \rho U_1 &U_{2,b}
      \\
      U_{2,b}^\T &U_{2,c}
    \end{array} \right)
    =
    U_2 - \diag(\rho U_1, \, 0).
  \end{align}
  By standard matrix theory,
  the positive semidefiniteness is implied by that of $D$.

  We now show $D$ is positive semidefinite. 
  We first give useful algebraic expressions for $U_2$ and $\rho U_1$.
  Notice that $(\btheta_1^\T, \, \ldots, \, \btheta_m^\T)$ is just
  permuted $\btheta^\T = (\balpha^\T, \bbeta^\T)$, the information
  matrix (\ref{eq:Uexpression}) of which helps us to obtain algebraic
  expressions for $U_1$ and $U_2$.
  Recall $\bQ(x) = {(1, \, \q^\T(x))}^\T$. 
  For $R_n^{(2)}$, we get
  \begin{align*}
  U_2
    = 
    \E_0 \big\{
    H(\btheta^*, x)  \otimes \{\bQ(x) \bQ^\T(x)\}
    \big\}.
  \end{align*}
  For $R_n^{(1)}$, we find
  \begin{align*}
    \rho U_1
    = 
    \E_0 \big\{
    H_r(\btheta^*, x)  \otimes \{\bQ(x) \bQ^\T(x)\}
    \big\},
  \end{align*}
  where $H_r(\btheta, x)$ is the $H$ matrix defined in
  (\ref{eq:def}) based on the first $r+1$ samples.
  Substituting the above expressions of $U_2$ and $\rho U_1$ into the
  expression \eqref{eq:infomatCompDexpression} of $D$, we get
  \begin{align*}
    D
    &=
     \rho_m \E_0 \big\{
      \{\bw(x) \bw^\T(x)\}
      \otimes \{\bQ(x) \bQ^\T(x)\}
    \big\},
  \end{align*}
  with
  \begin{align*}
    \bw(x)
    =
    \sqrt{\varphi_m(\btheta^*, \, x)}
    \left(\h_r^\T(\btheta^*, \, x), \,
    s_r(\btheta^*, \, x)\right)^\T /
    {\sqrt{s(\btheta^*, \, x) s_r(\btheta^*, \, x)}},
  \end{align*}
  where $\h_r(\btheta, \, x)$ and $s_r(\btheta, \, x)$ are the $\h$
  vector and $s$ defined in (\ref{eq:def}) based on the first $r+1$
  samples, respectively.
  Since $D$ is the expectation of the Kronecker product of two squares
  of vectors, it is positive semidefinite. This completes the proof.
\end{proof}

To prove Lemma \ref{lemma:estComp} (2),
partition $U_{\balpha \balpha}$, $U_{\balpha \bbeta}$ and
$U_{\bbeta \bbeta}$ as follows:
\begin{align*}
  U_{\balpha\balpha}
  &=
  \left( \begin{array}{cc}
    {U_{\balpha\balpha,a}}
    &
    {U_{\balpha\balpha,b}}
    \\
    {U^\T_{\balpha\balpha,b}}
    &
    {U_{\balpha\balpha,c}}
  \end{array} \right)
  , \  
  U_{\balpha\bbeta}
  =
  \left( \begin{array}{cc}
    {U_{\balpha\bbeta,a}}
    &
    {U_{\balpha\bbeta,b}}
    \\
    {U_{\balpha\bbeta,c}}
    &
    {U_{\balpha\bbeta,d}}
  \end{array} \right)
  ,
  \  
  U_{\bbeta\bbeta}
  =
  \left( \begin{array}{cc}
    {U_{\bbeta\bbeta,a}}
    &
    {U_{\bbeta\bbeta,b}}
    \\
    {U^\T_{\bbeta\bbeta,b}}
    &
    {U_{\bbeta\bbeta,c}}
  \end{array} \right),
\end{align*}
where $U_{\balpha\balpha,a}$, $U_{\balpha\bbeta,a}$ and $U_{\bbeta\bbeta,a}$
are the corrsponding upper--left $r \times r$, $r \times rd$ and $rd \times
rd$ blocks.

We also introduce an important property of the Schur complement.
Let
\begin{align*}
  M
  =
  \begin{pmatrix}
    \underset{s \times s}
    {A}
    &
    \underset{s \times t}
    {B}
    \\
    \underset{t \times s}
    {C}
    &
    \underset{t \times t}
    {D}
  \end{pmatrix}
  \ \text{ and } \  
  D =
  \begin{pmatrix}
    \underset{u \times u}
    {E}
    &
    \underset{u \times v}
    {F}
    \\
    \underset{v \times u}
    {G}
    &
    \underset{v \times v}
    {H}
  \end{pmatrix},
\end{align*}
where $u+v = t$.
Suppose $M$, $A$ and $D$ are nonsingular.
By Theorem 1.4 of \citet{Zhang2005}, the lower--right $u \times u$ block
of $M/H$ is just $D/H$, and
\begin{align} \label{eq:quotientFormula}
  M/D = (M/H)/(D/H).
\end{align}
The above equality is known as the \emph{quotient formula}.
Similar quotient formula holds for $M/A$.

\begin{proof}[\sc{Proof of Lemma \ref{lemma:estComp} (2)}]

  We first give an algebraic expression for $\Lambda / \Lambda_c$.
  Recall the definition
  $\Lambda = U_{\bbeta\bbeta} -
  U_{\bbeta\balpha} U_{\balpha\balpha}^{-1} U_{\balpha\bbeta}$,
  so
  \begin{align*}
    \Lambda = \Psi/U_{\balpha\balpha},
  \end{align*}
  where
  \begin{align*}
    \Psi
    =
    \begin{pmatrix}
      U_{\bbeta\bbeta} & U_{\bbeta\balpha}
      \\
      U_{\balpha\bbeta} & U_{\balpha\balpha}
    \end{pmatrix}.
  \end{align*}
  Let $\Psi_1$ be the lower--right
  $\{(m-r)d + m\} \times \{(m-r)d + m\}$ block of $\Psi$.
  Then $\Lambda_c$, the lower--right $(m-r)d$ $\times$ $(m-r)d$ block of
  \mbox{$\Lambda = \Psi/U_{\balpha\balpha}$}, satisfies 
  \begin{align*}
    \Lambda_c = \Psi_1/U_{\balpha\balpha}.
  \end{align*}
  Therefore
  \begin{align*}
    \Lambda/\Lambda_c
    =
    (\Psi/U_{\balpha\balpha})/(\Psi_1/U_{\balpha\balpha})
    =
    \Psi/\Psi_1,
  \end{align*}
  where the second equality above is by quotient formula
  (\ref{eq:quotientFormula}).

  It is easily seen that $\Psi/\Psi_1 = \Omega/\Omega_1$,
  where
  \begin{align*}
    \Omega
    =
    \left( \begin{array}{cc;{2pt/2pt}cc}
      U_{\bbeta\bbeta,a} & U_{\bbeta\balpha,a}
      &U_{\bbeta\bbeta,b} &U_{\bbeta\balpha,b}
      \\
      U_{\balpha\bbeta,a} &U_{\balpha\balpha,a}
      &U_{\balpha\bbeta,b} &U_{\balpha\balpha,b}
      \\
      \hdashline[2pt/2pt]
      U_{\bbeta\bbeta,b}^\T &U_{\bbeta\balpha,c}
      &U_{\bbeta\bbeta,c} &U_{\bbeta\balpha,d}
      \\
      U_{\balpha\bbeta,c} &U_{\balpha\balpha,b}^\T
      &U_{\balpha\bbeta,d} &U_{\balpha\balpha,c}
    \end{array} \right)
  \end{align*}
  and $\Omega_1$ is the lower--right block of $\Omega$ with the same
  size as that of $\Psi_1$.
  Thus we get
  \begin{align*}
    \Lambda/\Lambda_c = \Psi/\Psi_1 = \Omega/\Omega_1.
  \end{align*}

  Let $\Omega_2$ be the lower--right $(m-r)(d+1) \times (m-r)(d+1)$ block
  of $\Omega_1$.
  Matrix $\Omega_1/\Omega_2$
  is just the lower--right $r \times r$ block of $\Omega/\Omega_2$, and
  $
  \Omega/\Omega_1 = (\Omega/\Omega_2)/(\Omega_1/\Omega_2)
  $
  by quotient formula (\ref{eq:quotientFormula}).
  Hence, we finally get
  \begin{align*}
    \Lambda/\Lambda_c
    = \Omega/\Omega_1
    = (\Omega/\Omega_2)/(\Omega_1/\Omega_2).
  \end{align*}
  The above identity implies that our cliam of
  $\Lambda/\Lambda_c \ge \rho \tilde \Lambda$
  is equivalent to 
  \begin{align*} 
    (\Omega/\Omega_2)/(\Omega_1/\Omega_2) \ge \rho \tilde \Lambda.
  \end{align*}
  Further notice that
  $\tilde \Lambda = \check{U}/\tilde{U}_{\balpha\balpha}$,
  where
  \begin{align*}
    \check{U} =
    \begin{pmatrix}
      \tilde{U}_{\bbeta\bbeta}
      &
      \tilde{U}_{\bbeta\balpha}
      \\
      \tilde{U}_{\balpha\bbeta}
      &
      \tilde{U}_{\balpha\balpha}
    \end{pmatrix},
  \end{align*}
  so, the above inequality is equivalent to  
  \begin{align} \label{eq:quotientToShow}
    (\Omega/\Omega_2)/(\Omega_1/\Omega_2)
    \ge \rho (\check{U}/\tilde{U}_{\balpha\balpha}).
  \end{align}

  In the last step, we prove the above inequality
  (\ref{eq:quotientToShow}).
  By standard matrix theory, if matrices $M$ and $N$ are
  both positive definite and $M \ge N$, then the corresponding Schur
  complements satisfy the same inequality.
  Note that both $\Omega/\Omega_2$ and $\check{U}$ are positive
  definite, so to show (\ref{eq:quotientToShow}),
  it is enough to show that
  \begin{align*}
    \Omega/\Omega_2 \ge \rho \check{U}.
  \end{align*}
  Note that parameter
  $
    \bphi^\T
    =
    (\bbeta_1^\T, \, \ldots, \, \bbeta_r^\T, \,
     \alpha_1, \, \ldots, \, \alpha_r, \,
     \bbeta_{r+1}^\T, \, \ldots, \, \bbeta_m^\T, \,
     \alpha_{r+1}, \, \ldots, \, \alpha_m)
  $
  is just permuted $(\btheta_1^\T, \, \ldots, \, \btheta_m^\T)$, so
  the conculsion of Lemma \ref{lemma:estComp} (1) also applies to the
  information matrix with respect to $\bphi$.
  The information matrix with respect to $\bphi$ for $R_n^{(2)}$ is
  just $\Omega$, and its lower--right $(m-r)(d+1) \times (m-r)(d+1)$
  block is $\Omega_2$. 
  For $R_n^{(1)}$, the infromation matrix is just $\check{U}$.
  Thus by Lemma \ref{lemma:estComp} (1), we have
  $
    \Omega/\Omega_2 \ge \rho \check{U}
  $.
  The proof is complete.
\end{proof}

\begin{proof}[\sc{Proof of Lemma \ref{lemma:generalizedIneq}}]
  Notice that
  \begin{align*}
    \begin{pmatrix}
      A + B &X+Y
      \\
      (X+Y)^\T &X^\T A^{-1} X + Y^\T B^\dagger Y
    \end{pmatrix}
    =
    \begin{pmatrix}
      A &X
      \\
      X^\T &X^\T A^{-1} X
    \end{pmatrix}
    +
    \begin{pmatrix}
      B &Y
      \\
      Y^\T &Y^\T B^\dagger Y
    \end{pmatrix}.
  \end{align*}
  The first matrix on the RHS is positive semidefinite by Theorem 1.12
  of \citet{Zhang2005}, and since $Y$ is in the column space of $B$,
  the second matrix on the RHS is also positive semidefinite by
  Theorem 1.20 of \citet{Zhang2005}.
  Therefore the matrix on the left hand side (LHS) is positive
  semidefinite. Also note that $A+B$ is positive definite.
  Hence the Schur complement of the LHS with respect to its
  upper--left block $A+B$,
  \begin{align*}
      X^\T A^{-1} X + Y^\T B^\dagger Y
      -(X+Y)^\T (A+B)^{-1} (X+Y),
  \end{align*}
  must also be positive semidefinite.
  The claimed result then follows.
\end{proof}

\section*{Appendix II: Parameter values in simulation studies}
\label{append:tables}
\begin{table}[H]
\caption{
  Parameter values for power comparison under non--normal distributions
  (Section 5.3).
  $F_0$ remains unchanged across parameter settings 0--5.
}
\label{tab:pwcomp}
\tabcolsep 4pt
\scriptsize
\centering
\begin{tabular}{c|ccc|cc|cc|cc|cc}
  \multicolumn{12}{c}{$\Gamma(\lambda, \, \kappa)$: gamma distribution with shape
  $\lambda$ and rate $\kappa$;}
  \\
  \multicolumn{12}{c}{$LN(\mu, \, \sigma)$: log--normal distribution with mean $\mu$ and
  standard deviation $\sigma$ on log scale;}
  \\
  \multicolumn{12}{c}{$Pa(\gamma)$: Pareto distribution with shape $\gamma$ and common
  support of $x > 1$;}
  \\
  \multicolumn{12}{c}{$W(b)$: Weibull distribution with scale $b$ and common shape of
  $0.8$.}
  \\
  \hline
  \multicolumn{12}{c}{Parameter settings}
  \Tstrut
  \\
  $F_0$
  &
  &\multicolumn{2}{c}{1}
  &\multicolumn{2}{c}{2}
  &\multicolumn{2}{c}{3}
  &\multicolumn{2}{c}{4}
  &\multicolumn{2}{c}{5}
  \Bstrut
  \\
  \hline
  \Tstrut
  &
  &$\lambda$ &$\kappa$
  &$\lambda$ &$\kappa$
  &$\lambda$ &$\kappa$
  &$\lambda$ &$\kappa$
  &$\lambda$ &$\kappa$
  \\
  \multirow{4}{*}{$\Gamma(0.2, \, 0.8)$}
  &$F_1$:
  &0.18 &0.7
  &0.17 &0.6
  &0.16 &0.5
  &0.155 &0.45
  &0.14 &0.4
  \\
  &$F_2$:
  &0.22 &0.85
  &0.24 &0.95
  &0.255 &1.05
  &0.18 &0.7
  &0.17 &0.6
  \\
  &$F_3$:
  &0.23 &0.95
  &0.255 &1.2
  &0.275 &1.25
  &0.29 &1.4
  &0.33 &1.6
  \\
  &$F_4$:
  &0.24 &1.05
  &0.27 &1.3
  &0.29 &1.4
  &0.31 &1.55
  &0.35 &1.85
  \Bstrut
  \\
  \hline
  \Tstrut
  &
  &$\mu$ &$\sigma$
  &$\mu$ &$\sigma$
  &$\mu$ &$\sigma$
  &$\mu$ &$\sigma$
  &$\mu$ &$\sigma$
  \\
  \multirow{4}{*}{$LN(0, \, 1.5)$}
  &$F_1$:
  &0.44 &1.3
  &0.7  &1.2
  &0.9  &1.15
  &1    &1
  &1.2  &0.85
  \\
  &$F_2$:
  &0.22 &1.32
  &0.57 &1.30
  &0.62 &1.25
  &0.67 &1.20
  &0.87 &1
  \\
  &$F_3$:
  &0.18 &1.35
  &0.63 &1.33
  &0.73 &1.30
  &0.83 &1.28
  &0.85 &1.28
  \\
  &$F_4$:
  &0.37 &1.38
  &0.60 &1.35
  &0.70 &1.33
  &0.75 &1.32
  &0.95 &1.30
  \Bstrut
  \\
  \hline
  \Tstrut
  &
  &\multicolumn{2}{c|}{$\gamma$}
  &\multicolumn{2}{c|}{$\gamma$}
  &\multicolumn{2}{c|}{$\gamma$}
  &\multicolumn{2}{c|}{$\gamma$}
  &\multicolumn{2}{c}{$\alpha$}
  \\
  \multirow{4}{*}{$Pa(2)$}
  &$F_1$:
  &\multicolumn{2}{c|}{1.9}
  &\multicolumn{2}{c|}{1.85}
  &\multicolumn{2}{c|}{1.8}
  &\multicolumn{2}{c|}{1.75}
  &\multicolumn{2}{c}{1.7}
  \\
  &$F_2$:
  &\multicolumn{2}{c|}{2.1}
  &\multicolumn{2}{c|}{2.2}
  &\multicolumn{2}{c|}{2.3}
  &\multicolumn{2}{c|}{1.85}
  &\multicolumn{2}{c}{1.75}
  \\
  &$F_3$:
  &\multicolumn{2}{c|}{2.35}
  &\multicolumn{2}{c|}{2.55}
  &\multicolumn{2}{c|}{2.70}
  &\multicolumn{2}{c|}{2.85}
  &\multicolumn{2}{c}{3.25}
  \\
  &$F_4$:
  &\multicolumn{2}{c|}{2.5}
  &\multicolumn{2}{c|}{2.78}
  &\multicolumn{2}{c|}{2.98}
  &\multicolumn{2}{c|}{3.2}
  &\multicolumn{2}{c}{3.75}
  \Bstrut
  \\
  \hline
  \Tstrut
  &
  &\multicolumn{2}{c|}{$b$}
  &\multicolumn{2}{c|}{$b$}
  &\multicolumn{2}{c|}{$b$}
  &\multicolumn{2}{c|}{$b$}
  &\multicolumn{2}{c}{$b$}
  \\
  \multirow{4}{*}{$W(1)$}
  &$F_1$:
  &\multicolumn{2}{c|}{0.76}
  &\multicolumn{2}{c|}{0.65}
  &\multicolumn{2}{c|}{0.59}
  &\multicolumn{2}{c|}{0.53}
  &\multicolumn{2}{c}{0.42}
  \\
  &$F_2$:
  &\multicolumn{2}{c|}{1.2}
  &\multicolumn{2}{c|}{1.26}
  &\multicolumn{2}{c|}{1.31}
  &\multicolumn{2}{c|}{1.35}
  &\multicolumn{2}{c}{1.42}
  \\
  &$F_3$:
  &\multicolumn{2}{c|}{1.08}
  &\multicolumn{2}{c|}{1.05}
  &\multicolumn{2}{c|}{1.10}
  &\multicolumn{2}{c|}{1.12}
  &\multicolumn{2}{c}{1.14}
  \\
  &$F_3$:
  &\multicolumn{2}{c|}{0.90}
  &\multicolumn{2}{c|}{0.89}
  &\multicolumn{2}{c|}{0.85}
  &\multicolumn{2}{c|}{0.82}
  &\multicolumn{2}{c}{0.78}
  \Bstrut
  \\
  \hline
\end{tabular}
\end{table}

\begin{table}[H]
\caption{
  Parameter values for power comparison under misspecified DRMs
  (Section 5.4).
  $F_0$ remains unchanged across parameter settings 0--5.
}
\label{tab:pwcompmis}
\centering
\tabcolsep 4pt
\scriptsize
\begin{tabular}{c|ccc|cc|cc|cc|cc}
  \multicolumn{12}{c}{$W(a, \, b)$: Weibull distribution
  with shape $a$ and scale $b$.}
  \\
  \hline
  \multicolumn{12}{c}{Parameter settings}
  \Tstrut
  \\
  $F_0$
  &
  &\multicolumn{2}{c}{1}
  &\multicolumn{2}{c}{2}
  &\multicolumn{2}{c}{3}
  &\multicolumn{2}{c}{4}
  &\multicolumn{2}{c}{5}
  \Bstrut
  \\
  \hline
  \Tstrut
  &
  &$a$ &$b$
  &$a$ &$b$
  &$a$ &$b$
  &$a$ &$b$
  &$a$ &$b$
  \\
  \multirow{4}{*}{$W(1, \, 1)$}
  &$F_1$:
  &0.9   &0.95
  &0.85  &0.94
  &0.82  &0.92
  &0.79  &0.91
  &0.75  &0.88
  \\
  &$F_2$:
  &0.98  &0.98
  &0.96  &0.96
  &0.95  &0.95
  &0.94  &0.94
  &0.91  &0.92
  \\
  &$F_3$:
  &1.03  &1.04
  &1.05  &1.06
  &1.07  &1.07
  &1.09  &1.08
  &1.12  &1.12
  \\
  &$F_4$:
  &1.01  &0.95
  &1.02  &0.92
  &1.03  &0.90
  &1.05  &0.89
  &1.07  &0.85
  \Bstrut
  \\
  \hline
\end{tabular}
\normalsize
\end{table}

\begin{table}[H]
\caption{
  Parameter settings for power comparison of $R_n^{(1)}$ and
  $R_n^{(2)}$ (Section 5.5).
}
\label{tab:pwcomp1vs2}
\centering
\tabcolsep 4pt
\begingroup
    \fontsize{8pt}{9.6pt}\selectfont
\begin{tabular}{c|cccccc}
  \hline
  \multirow{4}{*}{Normal Case}
  &\multicolumn{6}{c}{Common parameter settings: $F_0: N(0, \, 1)$, $F_2: N(-1, \, 2)$}
  \\
  \cline{2-7}
  &\multicolumn{6}{c}{Parameter settings for $F_1$}
  \\
  &0
  &1
  &2
  &3
  &4
  &5
  \\
  &$N(1.5, \, 0.5)$
  &$N(1.57, \, 0.45)$
  &$N(1.58, \, 0.41)$
  &$N(1.6, \, 0.39)$
  &$N(1.62, \, 0.36)$
  &$N(1.64, \, 0.31)$
  \\
  \hline
  \hline
  \multirow{4}{*}{Gamma Case}
  &\multicolumn{6}{c}{Common parameter settings: $F_0: \Gamma(2, \, 1)$}
  \\
  \cline{2-7}
  &\multicolumn{6}{c}{Parameter settings for $F_1$}
  \\
  &0
  &1
  &2
  &3
  &4
  &5
  \\
  &$\Gamma(4, \, 3)$
  &$\Gamma(5.3, \, 4.3)$
  &$\Gamma(6.3, \, 5.3)$
  &$\Gamma(7.1, \, 6.1)$
  &$\Gamma(8.3, \, 7.3)$
  &$\Gamma(10, \, 9)$
  \\
  \hline
\end{tabular}
\endgroup
\end{table}


\bibliographystyle{chicago}
\bibliography{drm.bib}

\end{document}